\definecolor{Prune}{RGB}{99,0,60}
\definecolor{myprpl}{RGB}{255,0,130}
\newcolumntype{x}[1]{>{\centering\arraybackslash\hspace{0pt}}p{#1}}
\newlength{\minussignlength}
\def\JJ#1{^{\textcolor{gray}{\texttt{\#}} #1}}
\def\J{\JJ{j}}
\def\I{\JJ{1}}
\def\II{\JJ{2}}
\DeclareFontFamily{U}{wncy}{}
\DeclareFontShape{U}{wncy}{m}{n}{<->wncyr10}{}
\DeclareSymbolFont{mcy}{U}{wncy}{m}{n}
\DeclareMathSymbol{\Sh}{\mathord}{mcy}{"58} 
\newtheoremstyle{myplain}
  {.5em}       
  {.5em}       
  {\itshape}   
  {0pt}        
  {\scshape}   
  {.}          
  {5pt plus 1pt minus 1pt} 
  {}           
\newtheoremstyle{mydefinition}
  {.5em}       
  {.5em}       
  {\normalfont}
  {0pt}        
  {\bfseries}  
  {.}          
  {5pt plus 1pt minus 1pt} 
  {}           
\newtheoremstyle{myremark}
  {.5em}       
  {.5em}       
  {\normalfont}
  {0pt}        
  {\itshape}   
  {.}          
  {5pt plus 1pt minus 1pt} 
  {}           
\numberwithin{equation}{section}
\theoremstyle{myplain}
\newtheorem{thm}{Theorem}[section]
\newtheorem{lem}[thm]{Lemma}
\newtheorem{cor}[thm]{Corollary}
\newtheorem{prop}[thm]{Proposition}
\theoremstyle{mydefinition}
\newtheorem{defn}[thm]{Definition}
\newtheorem{exmp}[thm]{Example}
\theoremstyle{myremark}
\newtheorem{rem}[thm]{Remark}
\newtheorem*{dscr}{Description}
\newtheorem*{intprf}{Proof}
\newtheorem*{intprfsk}{Sketch of Proof}
\newenvironment{prf}[1][]
    {\ifstrempty{#1}{\begin{intprf}}
                    {\begin{intprf}[\textit{#1}]}}
    {\qed\end{intprf}}
\title
[Numerical Calculations of Periods on Schoen's Class of Calabi--Yau Threefolds]
{{\large{
\centerline{Numerical Calculation of Periods on}
\centerline{Schoen's Class of Calabi--Yau Threefolds}}}
\hspace{0pt}
\\\;\vspace{-4pt}
\\{$\;$\normalfont{Azur ĐonlagiĆ}
}}
\begin{document}

\begin{abstract}
\vspace{-7pt}
Through classical modularity conjectures, the period integrals of a holomorphic $3$-form on a rigid Calabi--Yau threefold are interesting from the perspective of number theory. Although the (approximate) values of these integrals would be very useful for studying such relations, they are difficult to calculate and generally not known outside of the rare cases in which we can express them exactly. 
In this paper, we present an efficient numerical method to compute such periods on a wide class of Calabi--Yau threefolds constructed by small resolutions of fiber products of elliptic surfaces over $\mathbf P^1$, introduced by C. Schoen in his 1988 paper. Many example results are given, which can easily be calculated with arbitrary precision. We provide tables in which each result is written to a precision of 30 decimal places and then compared to integrals of the appropriate modular form, to confirm accuracy.
\end{abstract}
\maketitle

\vspace{-27pt}
\tableofcontents
\vspace{-37pt}

\section{Introduction and Notation}

Given a Calabi--Yau threefold $X$ over $\mathbf C$ with a fixed nowhere-zero holomorphic $3$-form $\omega$ (which is unique up to multiplication by a scalar, by the definition of $X$), we consider the \textit{periods} of $\omega$ on $X$. These are the complex numbers in the image of the map
$$\mathcal I : \mathrm{H}_3(X,\mathbf Z)\rightarrow\mathbf C$$
defined by integration of $\omega$ over $3$-cycles. Such values are interesting from the point of view of arithmetic (if $X$ is defined over $\mathbf Q$, they can be related to periods of certain modular forms;~for examples, see \cite{BKSZ}, \cite{Chm21}, \cite{CvS19}, \cite{Hof13}), however they are usually difficult to compute explicitly. In particular, naive numerical calculations fail to converge quickly. For some cases~in which it has been done, see \cite{LPV24} (especially \textsection1.2 there), \cite{Srt19} and \cite{RS20}.

In his paper \cite{Sch88}, C.\ Schoen constructed a ``large, yet quite tractable'' class of Calabi--Yau threefolds. The goal of this paper is to describe a numerical method for computing the group of periods on a given threefold in Schoen's class
and to give many explicit examples. 
For singular models of these threefolds, such a procedure has independently and simultaneously~been~outlined in \cite{PP25}. The main advantage of our method is that, in addition to the singular models,~it allows for treatment of their ``small resolutions'' (the ultimate result of Schoen's construction; see below), as well as similarly-constructed non-Calabi--Yau threefolds. Another advantage is our explicit construction of $3$-cycles which paints a clear geometric picture. The algorithm~splits into two steps, which we now explain along with the structure of the paper:
\medskip

Schoen's construction rests on small resolutions $\widehat{X}$ of a fiber product $X$ of two elliptic surfaces $E\I,E\II$ over $\mathbf P^1$ with semistable singular fibers. 
We review the details of this construction in Section 2 and select three ``types'' (Type I, II, III) of examples of such threefolds on which we illustrate our computation method. Our object of interest in later sections is thus in particular a fibration $X\rightarrow\mathbf P^1$ (with section) with generic fiber a product of two elliptic curves.

Let $\Sigma$ be the finite set of points in $\mathbf P^1$ with singular fibers in $X$. In Section 3, we explain the first step of the computation: to produce integrals of $\omega$ over $3$-chains in $X$ which (imprecisely, but intuitively) lie above a path in $\mathbf P^1\setminus\Sigma$. Given such a path $\gamma$, we may think of these $3$-chains simply as continuous families, indexed by $z\in\gamma$, of $2$-cycles in $X_z$. As we will see, we may even assume that each such $2$-cycle is a product of two loops (one in each of the two elliptic curves). We refer to these integrals as our \textit{partial periods} (see Definition~\ref{defnpartres}).

Avoiding the slow convergence of the naive approximation of these partial periods, we show in Example \ref{exmpmain} how to calculate them by solving differential equations (by implementing the ``Frobenius algorithm''; see Theorem \ref{frobsol}) coming from the Gauss--Manin connection. As this method is quite involved, in Section 4 we work out an explicit example (of type II).
\smallskip

The second step in our main computation is to combine the many partial periods calculated~in the preceding sections into periods of ($X$ and) $\widehat{X}$. The main difficulty lies in finding a ``recipe'' to exactly characterize these periods among all possible linear combinations of this raw data. In Section 5, such a characterization (Theorem \ref{thmperMV}) is found in terms of vanishing cycles of $X$ and $\widehat{X}$ at the points in $\Sigma$. In particular, it is shown that \textit{all} the periods of $X$ and $\widehat{X}$ can be expressed as combinations of our partial periods, which is not obvious (this claim is similar to what is proven in \cite[\textsection 3]{Sho81}, but much simpler).

Finally, in Section 6 we determine the vanishing cycles in all singular fibers (of $X$ and of $\widehat{X}$) in terms of the vanishing cycles on the elliptic surfaces $E\I, E\II$, which are easy to understand as their singular fibers are semistable by assumption. Moreover, we show how to translate~these statements into a very explicit form which is easily used (by a computer algebra program, such as Maple) to finish the main computation.

Section 7 contains the results of this computation for all examples of Types I, II and III (more details on this at the end of the next section), based on explicit formulas from \cite{Her91} and \cite{Baa10}. More concretely, we give in each example a $\mathbf Z$-basis of the lattice $\mathrm{im}(\mathcal I)\subseteq\mathbf C$ of periods (of $X$ and of $\widehat{X}$).
All our symbolic and explicit calculations were done in Maple$^{_\mathrm{TM}}$ (see \cite{Maple}, v.\ 2022 and 2023) and the results are presented with 30 decimal digits of precision.
\medskip

A brief remark on our notation: Everywhere in the paper, we use the superscript $\J, j=1,2$ in all calculations which involve the elliptic surfaces $E\I,E\II$ and related maps and quantities specific to the two surfaces (such as their defining polynomials $g_2\J,g_3\J$, for example). This is made to distinguish them from the various other superscripts and subscripts used.

Unless stated otherwise, singular cohomology will be taken with coefficients in $\mathbf C$, and singular homology with coefficients in $\mathbf Z$. That is, we write: $\mathrm{H}_k(-) = \mathrm{H}_k(-,\mathbf Z)$ and $\mathrm{H}^k(-) = \mathrm{H}^k(-,\mathbf C)$.
All varieties considered in this paper are algebraic and over $\mathbf C$.
\medskip

The research which led to this paper was done as part of a project led by Sławomir Cynk at the Jagiellonian University in Cracow and supported by the National Science Centre, Poland, with grant No.\! 2020/39/B/ST1/03358.

The author is very thankful to prof.\! Cynk for this opportunity and continued guidance during the writing of this paper, as well as for always motivating the author to improve himself in both algebraic geometry and the Polish language. He would also like to thank the remaining staff at the UJ Institute of Mathematics for an excellent education and for their efficient administration, most of all prof.\! Dominik Kwietniak without whom his personal life would~have~now~been~much different. Finally, he would like to thank all his friends in Poland~for the many great memories~of his years spent there, academic or otherwise.

\section{Schoen's Construction and the Problem Statement}\label{schsec}

A few words about C. Schoen's construction of Calabi--Yau threefolds from \cite{Sch88}:

\begin{dscr}
Suppose we are given two elliptic surfaces $E\I, E\II$ over the base $\mathbf P^1$. Their fiber product $X\coloneqq E\I \!\times_{\mathbf P^1}\! E\II$ is a three-dimensional algebraic variety over $\mathbf C$. Directly blowing-up its singularities, we get a smooth variety, so a threefold $\widetilde{X}$. However, we generally~do~not~have much control over $\widetilde{X}$. For example, if $X$ has an ordinary double point (i.e. it's locally biholomorphic to the locus of $x_1x_2-x_3x_4=0$ around $0\in\mathbf C^4$), also called a ``node'', then this point gets replaced in $\widetilde{X}$ by an exceptional divisor, which is a smooth quadric.

Instead, ``small resolutions'' are recalled in \cite[\textsection 1]{Sch88}, which replace a node by a line. This is done by (locally-analytically) blowing-up along a surface $S$ passing through the singularity. Because $S$ is defined by an invertible sheaf away from the node, the effect of the blow-up is nontrivial only at the node, which gets replaced by a line. If the starting variety $X$ has only nodes as singularities, then applying this procedure several times gives a (smooth) threefold~$\widehat{X}$. Of course, $\widehat{X}$ is dependent on the surfaces $S$ chosen at each node (although, further blowing-up $\widehat{X}$ at the newly-created exceptional lines again gives $\widetilde{X}$ as above; thus the small resolutions lie in-between $X$ and its usual blow-up $\widetilde{X}$) and $\widehat{X}$ is generally not an algebraic variety!

The canonical sheaf $\Omega^3$ of $\widehat{X}$ agrees with the pull-back of the dualizing sheaf $\Omega^\circ$ of $X$ (in the sense of Serre duality, \cite[III.7]{Har77}) everywhere except at most on the exceptional lines which make up a set of codimension $2$, but then the two invertible sheaves are equal. This is the kind of control for which we needed small resolutions (without going into the Euler characteristics studied by Schoen in \cite[\textsection 5]{Sch88}), as we can now explicitly find a global section of $\Omega^3$.
Finally, we would like for the threefold $\widehat{X}$ to be a Calabi--Yau manifold and the discussion so far indicates three conditions that should be satisfied:

\begin{enumerate}[$\hspace{0.75 cm}\bullet$]
    \item The fiber product $X$ needs to have only nodes as singularities, so that we can construct~$\widehat{X}$.
    \item The dualizing sheaf $\Omega^\circ$ of $X$ needs to be trivial, for the same to hold for the canonical sheaf $\Omega^3$ of $\widehat{X}$.
    \item The small resolution $\widehat{X}$ needs to be a K\"ahler manifold.
\end{enumerate}

Let the elliptic surfaces $E\I,E\II$ be relatively minimal. It is proven in \cite[\textsection 2]{Sch88} that the second condition above is satisfied if and only if $E\I,E\II$ are both rational (with $0$-sections). Furthermore, it is shown that the first condition is satisfied if both surfaces have only singular fibers of type $I_n$ in the Kodaira classification, which are also often called ``semistable fibers''. Then a point $(w\I,w\II)\in X_z$ over some $z\in\mathbf P^1$ is a node if and only if $w\I$ and $w\II$ are both singular points in the (necessarily also singular) fibers $E\I_z$ and $E\II_z$. Then $z\in\Sigma\I\cap\Sigma\II$, where $\Sigma\J$ denotes the set of points with singular fibers in $E\J$.

In our situation, the third condition is equivalent to $\widehat{X}$ being a projective algebraic variety. Since an algebraic blow-up of a projective variety is a projective variety, we only need to check that the surfaces $S$ in the construction of small resolutions (with at least one passing~through each node) can be chosen so that they are all smooth closed algebraic subvarieties of codimension $1$ in $X$. 
Following this reasoning, it is proven in \cite[\textsection 3]{Sch88} that the third condition is satisfied if (and only if, up to isogeny in the first case below) one of the following two cases holds:

\begin{enumerate}[$\hspace{0.75 cm}\bullet$]
    \item $E\I = E\II$ 
    \item $E\I\neq E\II$ and, for every $z\in\Sigma\I\cap\Sigma\II$, neither $E\I_z$ nor $E\II_z$ are of type $I_1$
\end{enumerate}

To conclude, we want that $E\I,E\II$ are two relatively minimal, rational elliptic surfaces over $\mathbf P^1$ which have only semistable singular fibers, and falling into one of the two cases listed here.
This explains the construction of a Calabi--Yau threefold $\widehat{X}$, which we will refer to as “Schoen’s construction” going forward. While $\widehat{X}$ itself depends on a choice of small~resolutions (through the choice of subvarieties $S$ in $X$), it turns out that its periods do not; see Remark \ref{remcuriosity}.
\end{dscr}


We now want to systematically list some explicit examples to which we apply our computation method.
For simplicity of calculation, we restrict ourselves to examples in which the singular fibers appear only over real points, that is $\Sigma\subseteq\mathbf P^1_{\mathbf R}\subseteq\mathbf P^1$. However, this assumption is in no way a necessary prerequisite to the method of computation described in this paper.
\begin{defn}\label{defntypes}
Let $E\J\rightarrow\mathbf C,\; j=1,2$, be rational elliptic surfaces with section and semistable fibers, all lying over $\Sigma\J\subseteq\mathbf R$. We construct $\widehat{X}$ as above and distinguish three \textit{types} of examples:
\begin{itemize}
    \item \textbf{Type I:} We take $E\I=E\II$ to be a (relatively minimal) elliptic surface with $\texttt{\#}\Sigma\J = 4$, i.e. a \textit{Beauville surface}. The list of all six such surfaces is given in \cite{Bea82}. Only four of them have all singular fibers lying above real points; they are determined by the Kodaira types of their singular fibers: $(I_2,I_2,I_4,I_4),\;(I_1,I_1,I_2,I_8),\;(I_1,I_1,I_5,I_5),\;(I_1,I_2,I_3,I_6)$. Explicit Weierstrass models of these surfaces can be found in \cite{Her91}.
\smallskip
    
    \item \textbf{Type II:} We take $E\I$ to be the surface with singular fibers $(I_1,I_2,I_3,I_6)$ from the item above. We then also take $E\II$ to be a copy of $E\I$ with a different map to $\mathbf P^1$, given by composing the map of $E\I$ with a M\"obius transformation. In particular, any nontrivial permutation of the triple of fibers of types $(I_2,I_3,I_6)$ can be uniquely attained in this way. Hence there are five examples of this type, first studied by M. Sch\"utt in \cite{Stt04}.
\end{itemize}
Note that both Type I and Type II satisfy the conditions for Schoen's construction, hence $\smash{\widehat{X}}$ is a Calabi--Yau threefold. Moreover, the set $\Sigma\I\cap\Sigma\II$ has at least three elements in both cases. By \cite[Theorem 7.1]{Sch88}, this condition (together with the rest of our assumptions) ensures~that all the varieties $\widehat{X}$ of Type I and Type II are \textit{rigid} (have no infinitesimal deformations).~Since $\widehat{X}$ is Calabi--Yau, this is equivalent to saying that $\dim_{\mathbf C}\mathrm{H}^3(\widehat{X})=2$. 

For the third and final type, we relax the rationality condition on $E\I$ and $E\II$:
\begin{itemize}
    \item \textbf{Type III:} We take $E\I=E\II$ to be the modular surface $M$ over (the compactification of) the modular curve $X_1(N),N\geq 4$. The compactification of such a curve is isomorphic to $\mathbf P^1$ when $N\leq 10$ or $N = 12$, and the maps $M\rightarrow X_1(N)$ can be found in \cite{Baa10}. Parameterizing this surface over $X_1(N)\simeq \mathbf P^1$ is now easy to do in Maple.
    
    Note that for $N<7$ these surfaces coincide with some of those already considered in the previous two types, and that surfaces over $X_1(9)$ and $X_1(12)$ have singular fibers over non-real points. This leaves only $X_1(7)$, $X_1(8)$ and $X_1(10)$.
\end{itemize}
Without the rationality condition, the same construction as in examples of Type I and II yields in examples of Type III a threefold $\widehat{X}$ which is not Calabi--Yau. 
However, our interest in these examples stems from the fact that, in \cite{Del71}, Deligne used the self-fiber product of a modular surface given by the congruence subgroup $\Gamma_0(N)$ to construct certain Galois representations associated to modular forms. Under this construction, period integrals of the self-fiber product (i.e. results of our Type III) correspond to periods of the appropriate modular form (and hence also special values of its L-function).
\end{defn}

Let us now examine the explicit form in which these examples are given and see how Type~III differs from Types I and II. As explained for each of the three types, (birational models of) the elliptic surfaces $E\I,E\II$ are all known in Weierstrass form over $\mathbf P^1$. This means that we are given homogeneous forms $g_2\J,g_3\J$ such that the (usually singular) surface defined by $$(y\J)^2 = 4(x\J)^3-g_2\J x\J-g_3\J$$ is isomorphic to $E\J$ over $\mathbf P^1\setminus\Sigma\J$. Here we put $g_2\J\in\mathcal O_{\mathbf P^1}(4k)$ and $g_3\J\in\mathcal O_{\mathbf P^1}(6k)$, for $k\in\mathbf Z_{\geq1}$, to keep the surface well-defined up to admissible transformation (for all examples in Definition~\ref{defntypes}, the forms $g_2\J,g_3\J$ will be presented explicitly together with the results in Section 7). Now, this Weierstrass equation as written here requires two small clarifications:

\begin{exmp}\label{exmpdef3form}
Fix a chart $(z\mapsto[z:1]) : \mathbf A^1\hookrightarrow\mathbf P^1$ on which we identify $g_2\J,g_3\J$ with functions $g_2\J(-,1),g_3\J(-,1)$. First, the above equation defines a priori a surface in $\mathbf A^2\times\mathbf A^1$, but it clearly extends to a closed subvariety of $\mathbf P^2\times\mathbf A^1$, and we will always use this without mention, as is usually done with elliptic curves. Second, and more importantly, it glues with the analogous surface defined over the chart $(w\mapsto[-1:w]) : \mathbf A^1\hookrightarrow\mathbf P^1$. We will call these charts $U^z$ and $U^w$.
It follows that there is a model $X^z$ of the restriction $X|_{U^z}$ (of $X = E\I\times_{\mathbf P^1}E\II$ to the base $U^z$) which is isomorphic to it over $U^z\setminus(\Sigma\I\cup\Sigma\II)$. Explicitly, $X^z$ is the closure of the set
$$\left\{(x\I,y\I,x\II,y\II,z)\in\mathbf A^2\times\mathbf A^2\times\mathbf A^1\left|\;
\begin{aligned}
    (y\I)^2 = 4(x\I)^3-g_2\I(z,1)x\I-g_3\I(z,1)\\
    (y\II)^2 = 4(x\II)^3-g_2\II(z,1)x\II-g_3\II(z,1)
\end{aligned}
\right.\right\}$$
inside $\mathbf P^2{\times}\mathbf P^2{\times}\mathbf A^1$. Over every point $[z:1]\in U^z$, there is a nowhere-zero holomorphic $1$-form
$$\omega\J_z\coloneqq \frac{\mathrm dx\J}{y\J} = \frac{2\mathrm dy\J}{12(x\J)^2-g_2}$$
in the fiber $E\J_{[z:1]}$. These $1$-forms vary holomorphically between fibers, and we may thus consider the nowhere-zero $3$-form $\widetilde{\omega}\coloneqq \omega\I\wedge\omega\II\wedge\mathrm dz$ on the regular part of $X^z$ (that is, $X^z$ without finitely many nodes). 
Working locally, we find that it lifts to the regular part of $X|_{U^z}$. Our~goal in this example is to show that this form extends to a global holomorphic $3$-form on $\widehat{X}$.

We proceed the same way over $U^w$. If $[z:1]=[-1:w]$, then $zw = -1$ and we calculate:
$$g_2\J(z, 1) = w^{-4k}g_2\J(-1, w), \;\;\;\;
g_3\J(z, 1) = w^{-6k}g_3\J(-1, w)$$
The transition map $X^z|_{U^z\cap U^w}\xrightarrow{\;\;\sim\;\;}X^w|_{U^z\cap U^w}$ is hence given by
$$(x\I,y\I,x\II,y\II,z)\longmapsto (w^{2k}x\I,w^{3k}y\I,w^{2k}x\II,w^{3k}y\II,w), \;\;\textrm{ where }w = -1/z$$
and thus, for the analogous definition of $\omega\J_w$ over $U^w$, we have
$$\omega\I_w\wedge\omega\II_w\wedge\mathrm dw = (-z)^k\omega\I_z\wedge (-z)^k\omega\II_z\wedge\mathrm dz/z^2 = z^{2k-2}(\omega\I_z\wedge\omega\II_z\wedge\mathrm dz)$$
on the regular part of $X|_{U^z\cap U^w}$. We thus get $2k-1$ independent holomorphic $3$-forms
$$z^n\widetilde{\omega} = z^n(\omega\I_z\wedge\omega\II_z\wedge\mathrm dz) = (-w)^{2k-2-n}(\omega\I_w\wedge\omega\II_w\wedge\mathrm dw),\;\;\textrm{ for } n\in\{0,1,\ldots,2k-2\}$$
that lift to the regular part of $X$, which is isomorphic to $\widehat{X}$ with finitely many lines removed.

Since $\widehat{X}$ is a smooth threefold and lines are in codimension $2$, these forms extend to $2k-1$ independent holomorphic $3$-forms on $\widehat{X}$.
When $\widehat{X}$ is a Calabi--Yau manifold, such a form~is~unique up to scalar (and nowhere-zero), which forces $2k-1 = 1$. In particular, $k = 1$ for examples of Type I and Type II. On the other hand, $k > 1$ in Type III: The modular surfaces over $X_1(7)$ and $X_1(8)$ give $k = 2$, while the one over $X_1(10)$ gives $k = 3$. 
\end{exmp}

In the following sections, we will describe a procedure to numerically compute the periods of the form $\widetilde{\omega} = \omega\I\wedge\omega\II\wedge\mathrm dz$ (more generally, $z^n\widetilde{\omega}$) introduced in Example \ref{exmpdef3form}, which will work for any threefold produced by Schoen's construction. The final result is a finite-rank sub-$\mathbf Z$-module of $\mathbf C$, the image of $\mathrm{H}_3(\widehat{X})$ under the integration map $\smash{\mathcal I:[\gamma]\mapsto\int_\gamma\widetilde{\omega}}$.
When this threefold is rigid and Calabi--Yau, such as in examples of Types I and II, then $\mathrm{rk}_{\mathbf Z}\,\mathrm{H}_3(\widehat{X}) = \dim_{\mathbf C}\mathrm{H}^3(\widehat{X})=2$ (see Definition \ref{defntypes}). Otherwise, this rank can be larger.

\begin{rem}\label{remint3formvanish}
Suppose that a $3$-cycle $\gamma$ of $X$ (or $\widehat{X}$) is contained in a single fiber $X_z$, for $z\in\mathbf P^1$. 
Then $\int_\gamma\widetilde{\omega}=0$ since the restriction of a meromorphic $3$-form to a complex surface is trivial.

By assumption, the surfaces $E\I,E\II$ have sections, which we denote by $\textbf 0\I,\textbf 0\II$, respectively. If a $3$-cycle $\gamma$ of $X$ is contained inside one of the surfaces $\textbf 0\I{\times_{\mathbf P^1}}E\II$ or $E\I{\times_{\mathbf P^1}}\textbf 0\II$, then again $\int_\gamma\widetilde{\omega}=0$ 
for the same reason.
\end{rem}

Finally, note that $\widetilde{\omega}$ is defined on $X$ away from finitely many nodes (contained in the singular fibers, on which integration of the form $\widetilde{\omega}$ is, in any case, trivial by the above remark).~Therefore, we may also consider periods of $\widetilde{\omega}$ on $X$ in place of $\smash{\widehat{X}}$, which generally yields a $\mathbf Z$-module with a larger rank. As this is the intermediate step in the construction of $\widehat{X}$ before taking small~resolutions,~the hope is that the additional periods obtained in this way may also have~an~arithmetic interpretation.

The next two sections focus on a feasible algorithm for integration of $\widetilde{\omega}$ (between singular fibers, our so-called \textit{partial periods}; see Definition \ref{defnpartres}). From the point of view of the period computation~method which is the ultimate goal of this article, the difference between $X$ and $\widehat{X}$ comes into play in Sections 5 and 6, where we study how their vanishing cycles differ at each singular~fiber. Then, Section 7 will feature results for both threefolds $X$ and $\widehat{X}$, in each example of Type I, II or III (and $2k-1$ forms in Type III).

\section{Picard--Fuchs Equations and the Partial Periods}

We first recall the following more general situation: A smooth algebraic fibration $E\rightarrow B$ (for example, the restriction of an elliptic surface to smooth fibers) is, by Ehresmann's theorem, a $\mathcal C^\infty$ fiber bundle. We have the topological bundles $\mathcal{H}_r(E,\mathbf Z)$ and $\mathcal{H}^r(E,\mathbf C)$ of singular (co)homology groups $\mathrm{H}_r(E_b)$ and $\mathrm{H}^r(E_b)$, respectively, for $b\in B$. More precisely, these are \textit{local systems} of some rank $d\geq 0$, i.e. their fibers are discrete free modules (over $\mathbf Z$ and $\mathbf C$, respectively)~of~rank~$d$. Now, the bundle $\mathscr H\coloneqq\mathcal{H}^r(E,\mathbf C)\otimes\mathcal O_B$ is equipped with a flat connection $\nabla : \mathscr H\rightarrow\mathscr H\otimes\Omega^1_B$ (the \textit{Gauss--Manin connection}; see \cite[\textsection 9.2.1]{Voi02}) such that the sections of $\mathcal{H}^r(E,\mathbf C)\subseteq\mathscr H$ are exactly the solutions of $\nabla P = 0$.

Let $\omega$ be a section of $\mathscr H$. If, over a chart $U\subseteq B$, we fix a basis $\gamma_1,\ldots,\gamma_d$ of fibers of $\mathcal H_r(E,\mathbf Z)$, we may consider a (fiber-wise) linearly spanning set of sections taking values in periods of $\omega$
$$P_j(b)\coloneqq \int_{\gamma_j(b)}\omega(b), \;\;\textrm{ for which }P_j^{(k)}(b) = \frac{\mathrm{d}^k}{\mathrm{d}b^k}\!\int_{\gamma_j(b)}\omega(b) = \int_{\gamma_j(b)}(\nabla_{\mathrm{d}/\mathrm{d}b})^{k}\omega(b)$$
(cf. \cite[\textsection 4]{Grf68}). Over a dense open set $B_0\subseteq B$, all sections taking values in periods of $\omega$ then locally satisfy differential equations of some minimal order $d_0$ (independent of choice of $\gamma_j$), by the fiber-wise linear dependence of $\omega,\nabla_{\mathrm{d}/\mathrm{d}b}\,\omega,\ldots, (\nabla_{\mathrm{d}/\mathrm{d}b})^d\omega$. In particular, $d_0\leq d$.

\begin{exmp}\label{exmpdefgm}
In the situation of Section 2, this fibration is $X\rightarrow\mathbf P^1\setminus\Sigma$ (for $\Sigma = \Sigma\I\cup\Sigma\II$). We first observe that $d = 6$: Indeed, $X_z = E\I_z\times E\II_z$ and the rank of the homology group
$$\mathrm{H}_2(X_z)\cong(\mathrm{H}_0(E\I_z)\otimes\mathrm{H}_2(E\II_z))\oplus(\mathrm{H}_2(E\I_z)\otimes\mathrm{H}_0(E\II_z))\oplus(\mathrm{H}_1(E\I_z)\otimes\mathrm{H}_1(E\II_z))$$
is $6$ (the first two summands have rank $1$ each, and the last rank $4$). Now, taking $\omega_z = \omega\I_z\wedge\omega\II_z$ shows that $d_0\leq 4$ since the integral is zero over the first two summands (cf. Remark \ref{remint3formvanish}). 

Moreover, it is well-known that the connection $\nabla$ admits a completely algebraic definition (see \cite{KO68}). This means that, if we look at the induced differential equation over the algebraic chart $[-:1]$ on $\mathbf P^1$, its coefficients are going to be given by rational functions (or polynomials, by multiplying out the denominator). We will see an explicit example of this in Section 4.
\end{exmp}

We will exploit the existence of this differential equation, but before this, we must explain how to find it in practice (and then also how to extract the right solutions). First, consider~a~classical example of this phenomenon, the original Picard--Fuchs equation:

\begin{exmp}
Recall that an elliptic curve $C$ in Weierstrass form is nonsingular if and only if $0\neq\Delta(C)\coloneqq g_2^3(C)-27g_3^2(C)$ (this corresponds to the definition $4g_2^3(C)-27g_3^2(C)$ used if we alternatively take the Weierstrass form to be $x^3-g_2x-g_3$ without the leading coefficient $4$, but we will not follow that convention). Elliptic curves in Weierstrass form are classified up to isomorphism by the $J$-invariant $J(C)\coloneqq g_2^3(C)/\Delta(C)$; in particular invariant under admissible transformations $(x,y)\mapsto (u^2x, u^3y)$ for $u\in\mathbf C^\times$, for which $(g_2,g_3)\mapsto (u^{4}g_2, u^{6}g_3)$.

Every elliptic curve (in Weierstrass form) can be expressed in the form $y^2 = 4x^3-(x+1)g$ (so $g = g_2 = g_3$) by an appropriate admissible transformation. Setting $g(z) = 27z/(z-1)$ defines an elliptic surface over $U\coloneqq\mathbf C\setminus\{1\}$ such that $E_z : y^2 = 4x^3-(x+1)g(z)$ and $J(E_z) = z$. 

We consider the differential form $\omega = \mathrm{d}x/y$ on the fibers and denote by $f$ a section taking values in its \textit{periods}, which satisfies a differential equation by the discussion at the beginning of this section. It is exactly in this context (\cite[p.34]{FK890}; for a modern calculation, see \cite[Theorem 7.1]{Mil21}) that the historically original Picard--Fuchs equation was calculated to be:
$$\frac{\mathrm{d}^2f}{\mathrm{d}z^2} + \frac{1}{z}\frac{\mathrm{d}f}{\mathrm{d}z} + \frac{31z-4}{144z^2(1-z)^2}f = 0$$
We will be more interested in the behavior at infinity. Since $g$ extends to $\mathbf P^1$, we may pullback the same differential equation by $w\mapsto z = 1/w$ to the other standard chart of $\mathbf P^1$ to get:
$$w\frac{\mathrm{d}^2f}{\mathrm{d}w^2} + \frac{\mathrm{d}f}{\mathrm{d}w} + \frac{31-4w}{144(w-1)^2}f = 0$$
The point $w = 0$ is a singular point of the above differential equation; this agrees with intuition, as it corresponds to $J(E_z) = z = \infty$, and the fiber $E_\infty$ clearly must be singular. The function $g(z) = \tilde{g}(z:1)$ extends to $\tilde{g}(1:w) = 27/(w-1)$. Let's define the function
$$h(w)\coloneqq \left(\frac{27}{w-1}\right)^{\! -1/4}\cdot {_2}H_1\!\left(\frac{1}{12},\frac{5}{12};1;w\;\,\!\!\!\right)$$
where ${_2}H_1$ is the hypergeometric function. Up to a choice of branch of $4$th root (differing by a factor of a $4$th root of unity), $h$ is a uniquely defined holomorphic function and even admits a Taylor series convergent in a disk of radius $1$ (since both the $4$th root and the hypergeometric function, by definition, admit such series). This Taylor series can (by Maple) be calculated explicitly. Furthermore, $h$ is readily shown, by the basic properties of hypergeometric functions, to satisfy the above differential equation.

It is now not difficult to see that there exists some scalar $c\in\mathbf C^\times$ such that $c\cdot h$ takes values in periods of $\omega$ around the point $J = \infty$. For example, one may observe that $E$ has a semistable singular fiber at $J = \infty$, whose integral monodromy matrix (by the Kodaira classification, see \cite[VI.2.1]{Mir89}) fixes a unique basis vector in $\mathrm{H}_1(E_z)\simeq\mathbf Z^2$, which gives a section of periods that agrees with $h$ up to scalar.
\end{exmp}

We have therefore found a formula to essentially determine one period in each fiber sufficiently close to the singularity at $J=\infty$. However, we may observe that our solution is 
of the form $c\cdot h = c\cdot g_2^{-1/4}\cdot {_2}H_1(1/12,5/12;1;1/J)$. This is not some coincidence provided by our particular choice of elliptic surface $E\rightarrow B$. Instead, note that for any elliptic curve $C$ in Weierstrass form, an admissible transformation acts with $(g_2,\mathrm{d}x/y)\mapsto (u^4g_2, u^{-1}\mathrm{d}x/y)$. Thus the expression $$g_2^{1/4}\;\mathrm{d}x/y$$ gives the ``same'' $1$-form regardless of the particular Weierstrass form considered on $C$, and we have just proven (provided that $J(C)\neq 0,1$, at which points $h$ is not defined) that a period of this form has value $c\cdot{_2}H_1(1/12,5/12;1;1/J)$, for the same $c\in\mathbf C^\times$ as above. We may gather all of this~in~the following statement:

\begin{prop}\label{explperiod}
Given an elliptic surface $E\rightarrow B$ in Weierstrass form, for every nonsingular fiber $E_b$ such that $J(E_b)\neq 0,1$, there is some primitive (that is, not a multiple of any other) element $[\gamma_b]\in\mathrm{H}_1(E_b,\mathbf Z)\simeq\mathbf Z^2$ such that the following identity holds
$$g_2^{1/4}(E_b)\int_{\gamma_b}\frac{\mathrm{d}x}{y} = c\cdot {_2}H_1\!\left(\frac{1}{12},\frac{5}{12};1;\frac{1}{J(E_b)}\right)$$
up to a choice of branch of the $4$th root (a factor of a $4$th root of unity). 
\end{prop}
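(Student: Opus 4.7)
The plan is to reduce the general statement to the specific universal family $E\to B$ used in the Picard-Fuchs calculation immediately above, and then to extend the identity from a neighborhood of the singular fiber at $J=\infty$ to all $J\in\mathbf{C}\setminus\{0,1\}$ by analytic continuation of both sides, which live as solutions of the same second-order linear ODE.

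First, I would rewrite the output of the preceding example in the exact form of the proposition, for the universal family itself. Using $g_2(E_z)=27z/(z-1)=-27/(w-1)$, the prefactor $(27/(w-1))^{-1/4}$ in $h(w)$ agrees with $g_2(E_z)^{-1/4}$ up to a $4$th root of unity. Multiplying the identity $c\cdot h = \int_{\gamma_z}\mathrm dx/y$ (which has already been established near $w=0$, with $\gamma_z$ the monodromy-invariant primitive generator at the $I_n$-fiber over $J=\infty$) by $g_2(E_z)^{1/4}$ therefore yields the formula of the proposition for fibers of $E\to B$ with $|z|$ large.

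Next, I would extend this to every $z\in B_0\coloneqq\mathbf{C}\setminus\{0,1\}$. Over $B_0$ all fibers $E_z$ are smooth, and both sides of the identity are (a priori only locally defined) analytic solutions of the order-$2$ Picard-Fuchs equation, whose only singularities in $\mathbf P^1$ are at $0,1,\infty$. Picking a path in $B_0$ from the neighborhood of $\infty$ where the identity is already known to a target point $z_0$, analytic continuation of the RHS produces a branch of the hypergeometric solution, while the LHS transports $\gamma_z$ by the monodromy of the local system $\mathcal H_1(E,\mathbf Z)$, which acts through $\mathrm{SL}_2(\mathbf Z)$ and therefore preserves primitivity. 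Since both sides satisfy the same ODE and coincide at the base point, the identity persists at $z_0$. This handles all fibers of the universal family.

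Finally, for an arbitrary elliptic surface $E'\to B'$ in Weierstrass form with a nonsingular fiber $E'_{b'}$ satisfying $J(E'_{b'})\neq 0,1$, there is an admissible transformation $(x,y)\mapsto(u^2x,u^3y)$ bringing $E'_{b'}$ into the form $E_z$ with $z=J(E'_{b'})\in B_0$. As noted earlier in the excerpt, this transformation fixes the combination $g_2^{1/4}\,\mathrm dx/y$ and of course preserves $J$, so the identity just proved for the universal family transfers verbatim, with the same constant $c$. The main obstacle is the middle step: one must argue that analytic continuation of $c\cdot h$ actually tracks a period of a primitive integral cycle, rather than some irrational $\mathbf C$-linear combination of periods. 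This is ensured by the general principle (see again \cite[\textsection 9.2.1]{Voi02}) that the monodromy representation of the Picard-Fuchs equation is conjugate to the $\mathrm{SL}_2(\mathbf Z)$-representation on $\mathrm H_1(E_z,\mathbf Z)$, so the invariant cycle selected at $\infty$ propagates coherently and integrally along any path in $B_0$.
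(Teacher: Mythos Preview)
Your proposal is correct and follows essentially the same approach as the paper: the paper's argument is the discussion \emph{preceding} the proposition (there is no separate proof), which (i) builds the universal family with $J(E_z)=z$, (ii) identifies $c\cdot h$ with the period attached to the monodromy-invariant primitive cycle near $J=\infty$, and (iii) uses the admissible-transformation invariance of $g_2^{1/4}\,\mathrm dx/y$ to transfer the formula to any Weierstrass curve with $J\neq 0,1$. Your write-up is in fact more careful than the paper's on the analytic-continuation step from a neighborhood of $\infty$ to all of $\mathbf C\setminus\{0,1\}$, which the paper leaves implicit.
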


The assignment $b\mapsto[\gamma_b]$ may locally be chosen as a section of $\mathcal H_1(E,\mathbf Z)$.
The constant~$c$~can be calculated at any point and yields $2\pi/\sqrt[4]{12}$. The ambiguity of a $4$th root of unity can in principle be reduced to a $2$nd root of unity (which is acceptable, since both $c\cdot h$ and~$-c\cdot h$~take~values in periods of $\mathrm dx/y$) by considering $(g_3/g_2)^{1/2}$ instead of $g_2^{1/4}$ (and adding a factor of $(27J/(J-1))^{1/4}$ for~a~fixed branch); however, this introduces additional complications and we do not bother~with constants as the final results of this paper are naturally defined up to a scalar factor. 

The important consequence for us is that, given an elliptic surface $E\rightarrow\mathbf P^1$ and a point~$[z_0:1]$ with $\lim_{z\rightarrow z_0}J(E_{[z:1]})=\infty$ (it is known that then in fact $E$ necessarily has a semistable singular fiber at $[z_0:1]$), we can calculate the Taylor series at $z_0$ of one map $z\mapsto \int_{\gamma_z}(\mathrm dx/y)$ taking~values in periods of $E_{[z:1]}$, which may then be used to find the Picard--Fuchs equation over a fixed chart. We apply a similar procedure (see Example \ref{exmpmain}) to our main example coming from Section~2:\;\;\;

\begin{defn}\label{defnpartres}
Let $E\J\rightarrow\mathbf P^1$, $j=1,2$, be elliptic surfaces (in fixed Weierstrass forms) having singular fibers over $\Sigma\J\subseteq\mathbf P^1$, all assumed to be semistable. Denote by $U\hookrightarrow\mathbf P^1$~the~chart $z\mapsto [z:1]$. We consider the fiber product $X = E\I\!\times_{\mathbf P^1}\!E\II$ and the holomorphic~$3$-form~$\widetilde{\omega}$~introduced in Example \ref{exmpdef3form}, which takes the form $\omega\I\wedge\omega\II\wedge\mathrm{d}z$ over $U\setminus\Sigma$, for $\Sigma = \Sigma\I\cup\Sigma\II$.

Suppose that $a,b\in\Sigma$ and let $ab\subseteq U\setminus\Sigma$ denote an open path with ends $a$ and $b$. Consider a section $\ell$ of the local system $\mathcal H_2(X|_{U\setminus\Sigma},\mathbf Z)$, defined over $ab$. We will call values of the form
$$q^{ab}_\ell\coloneqq\int_{\ell\times ab}\widetilde{\omega}$$
the \textit{partial periods} of $\widetilde{\omega}$ (where $\ell\times ab$ is any $3$-cycle which follows the obvious definition~in~terms~of trivializations along $ab$).
The decomposition given by the K\"unneth formula (cf. Example~\ref{exmpdefgm})
$$\mathcal H_2(X|_{U\setminus\Sigma},\mathbf Z)\cong\mathcal H_2(E\I|_{U\setminus\Sigma},\mathbf Z)\oplus \mathcal H_2(E\II|_{U\setminus\Sigma},\mathbf Z)\oplus \left(\mathcal H_1(E\I|_{U\setminus\Sigma},\mathbf Z)\otimes\mathcal H_1(E\II|_{U\setminus\Sigma},\mathbf Z)\right)$$
together with Remark \ref{remint3formvanish} show that we may always assume $\ell$ to be a section of the rank-$4$ local system $\mathcal H_1(E\I|_{U\setminus\Sigma},\mathbf Z)\otimes\mathcal H_1(E\II|_{U\setminus\Sigma},\mathbf Z)$, otherwise the corresponding partial period equals $0$. In particular, when $\ell = \ell\I\otimes \ell\II$, we have:
$$q^{ab}_\ell = \int_a^b P_{\ell\I}(z) P_{\ell\II}(z) \;\!\mathrm{d}z,\;\;\textrm{ where }P_{\ell\J}(z)\coloneqq\int_{\ell\J(z)}\frac{\mathrm{d}x\J}{y\J}$$
The main goal of this section is to describe a feasible process for calculating such partial periods (as a naive approximation will generally not converge quickly enough for practical purposes). In practice, one wants to fix local sections $\ell_1\J,\ell_2\J$ of $\mathcal H_1(E\J|_{U\setminus\Sigma},\mathbf Z)$ which give bases in all fibers over $ab$. When $\ell= \ell_u\I\otimes \ell_v\II$, we will write $P_u\I = P_{\ell_u\I}$, $P_v\II = P_{\ell_v\II}$ and $q^{ab}_{u,v} = q^{ab}_\ell$.
\end{defn}

\begin{rem}\label{remrealroots}
If $E\I,E\II$ are given by real coefficients (of the homogeneous polynomials $g_2\J,g_3\J$) and if $ab\subseteq\mathbf R\subseteq U$, then there is a particularly nice choice of basis $\ell_1\J,\ell_2\J$. This will be the~case in all our examples of Type I, II or III, so we describe it here:

For $z\in ab$, we observe two cases for the distinct (there are no other singularities on this~path) roots $r_1\J(z),r_2\J(z),r_3\J(z)\in\mathbf C$ of the polynomial $4(x\J)^3-g_2\J(z)x\J-g_3\J(z)$:
\begin{enumerate}[$\hspace{0.75 cm}\bullet$]
    \item all three are real; then we can assume $r_1\J < r_2\J < r_3\J$ on $ab$
    \item one is real and two are complex-conjugate; then we can take $\mathrm{Im}(r_1\J) < \mathrm{Im}(r_2\J) < \mathrm{Im}(r_3\J)$ since if any two of these roots were to have equal imaginary parts, then they would themselves have to be equal; a contradiction 
\end{enumerate}
It thus makes sense to take the class $\ell_u\J(z)$ to be represented by a $2$-cycle lying ($2$ to $1$) above a straight line in $\mathbf C$ connecting $r_u\J(z)$ and $r_{u+1}\J(z)$. This defines the period functions $P_u\J$ up to sign (changing the branch of the square root in $\mathrm{d}x/y$ has the same effect as changing~the~orientation of the cycle). In the first case, one is real and the other imaginary, while in the second they are conjugate up to sign. All this is dependent on (and specific to!) the interval $ab$.
\end{rem}

\begin{exmp}\label{exmpmain}
Finally, we may explain the main idea of this part of the computation: There is, by Example \ref{exmpdefgm}, a differential equation $\Phi$ (with polynomial coefficients) of order $d_0\leq 4$ which is satisfied by every section $P$ over (a dense open set in) the chart $[-:1]$ taking values in periods of $\widetilde{\omega}$. To explicitly find $\Phi$, it suffices to take a Taylor series representing one such section $P$ and search for polynomials $p_j$ such that $\sum_{i=0}^4p_jP^{(j)}=0$. This is completely algorithmic, as it involves solving a system of linear equations for the coefficients of $p_j$ (where we assume $\deg p_j<N$ and this $N$ is increased until we get that such solutions exist).

The Taylor series that we use for this is of course that of $P = P\I\cdot P\II$ near~any~$s\in\Sigma\I\cap\Sigma\II$ (with both fibers semistable), where $P\J$ are given by Proposition \ref{explperiod} up to scalar.~Namely:
$$P(z)\coloneqq(g_2\I)^{-1/4}(z)\cdot (g_2\II)^{-1/4}(z)\cdot{_2}H_1\!\left(\frac{1}{12},\frac{5}{12};1;\frac{1}{J\I(z)}\right)\cdot {_2}H_1\!\left(\frac{1}{12},\frac{5}{12};1;\frac{1}{J\II(z)}\right)$$

For any point $z_0$, the equation $\Phi$ admits a $d_0$-dimensional complex vector space of solutions in a neighborhood $U$ of $z_0$, which are multivalued functions: In fact, we will show (Theorems \ref{holosol} and \ref{frobsol} below) that $d_0$ basis elements of the solution space can be found explicitly, as combinations of power series and powers of $\log$ (see Definition \ref{regsingdef}).
Suppose this for~the~moment.~By just computing the values $P_j\I(z_i)P_k\II(z_i)$ for some $z_1,\ldots,z_{d_0}\in U$ and a choice of bases~$\ell_1\J,\ell_2\J$ (as in the preceding definition and remark), we are now able to interpolate the above solutions and determine an expansion around $z_0$ of the functions $P_j\I\cdot P_k\II$ for $j,k\in\{1,2\}$.
\end{exmp}

\begin{rem}\label{remwelldefde}
As we've noted, $d_0\leq 4$. However, when $E\I=E\II$, then $d_0\leq 3$ (because we~then have the identity $P_1\I\cdot P_2\II = P_2\I\cdot P_1\II$ by symmetry).

Our algorithm finds the minimal rational differential~equation satisfied by the explicitly~known function $P$, which is of some order $d'_0\leq d_0$. In practice, it often holds that $d'_0 = 3$~if~$E\I=E\II$ and $d'_0 = 4$ otherwise, thus $d'_0 = d_0$ and the equation found must then indeed be $\Phi$. This is in particular always true for our Types I and III ($d_0 = 3$) and Type II ($d_0 = 4$).
\end{rem}

Continuing Example \ref{exmpmain}, suppose that $z_0 = a\in\Sigma$. We define a function
\begin{equation}\label{eqintdefq}
    Q^a_{j,k}(z)\coloneqq\int^z_aP_j\I(t) P_k\II(t)\,\mathrm dt
\end{equation}
in a small neighborhood of $a$ (where it can also be computed explicitly by integrating the expansion of the product $P_j\I\cdot P_k\II$ term-by-term). Our goal is now to extend $Q_{j,k}^a$ along a fixed path $ab$ to $b\in\Sigma$, which gives us the partial period $q^{ab}_{j,k} = Q^a_{j,k}(b)$.

This is simple to do, as $Q_{j,k}^a$ satisfies the obvious differential equation $\Phi'$ (in which we raise the order of derivatives in $\Phi$ by one). Our expansion converges in a disk with radius bounded by the proximity of the nearest singularity of $\Phi'$. We may work with this to construct a sequence of disks (centered at singular or nonsingular points) covering the chosen path between $a$~and~$b$, and then analytically continue our solutions. In each consecutive disk, solve $\Phi$ and compare~the indefinite integrals of its solutions to the definite integral $Q_{j,k}^a$.

\begin{rem}\label{remuphpdeform}
Technical remarks: The comparison can be done on the values at any $d_0+1$ (the order of $\Phi'$) points in the extension, but better precision is obtained by a comparison of the first $d_0+1$ derivatives of $Q_{j,k}^a$ at a single point. Also, even if (as in Remark \ref{remrealroots}) $ab$ is contained in the real line, $\Phi'$ might still have a singularity in the interior of $ab$. We will avoid such singularities by always deforming $ab$ into the upper half-plane (see Figure \ref{analcontfig} in the next section).

This method is easily performed on a standard PC with a regular amount of computing power, and the results calculated in this paper have used up to $350$ coefficients of the considered series with $1000$ digits of working precision. In comparison, approximating $Q_{j,k}^a(b)$ by calculating the integral \eqref{eqintdefq} by quadrature would involve calculating elliptic integrals above many integration points $t\in ab$ (as opposed to only $d_0$ points $z_1,\ldots,z_{d_0}$, following Example \ref{exmpmain}), which can take more than half a minute per integration point at the required precision.
\end{rem} 

All that remains is to show that we can indeed find explicit expansions of $P_j\I\cdot P_k\II$ as stated above. To this end, we note that the equation $\Phi$ is Fuchsian (see the following definition)~by \cite[4.3]{Grf70}, and apply to it the general theory recalled below, with which we finish this section: 

\begin{defn}\label{regsingdef}
Consider an ordinary differential equation of the form
$$0 = f^{(n)}+\sum_{j=1}^{n}a_jf^{(n-j)}$$
and suppose that its coefficients are meromorphic on an open domain $U\subseteq\mathbf C$. A point $z_0\in U$ is a \textit{regular point} of the equation if $a_j$ is holomorphic at $x$ for all $j=1,\ldots,n-1$, otherwise it is a \textit{singular point}. However, if a singular point $z_0$ is such that $z\mapsto (z-z_0)^{j}a_j(z)$ is holomorphic at $z_0$ for all $j$, we call it a \textit{regular singular point} of the equation (otherwise it is an irregular singular point).

If a solution of the equation is given in a neighborhood of a singular point $z_0$ by the sum
$$z\mapsto (z-z_0)^\rho\sum_{k=0}^{n-1} h_k(z)\log^k(z-z_0), \;\;\;\;\;\;\; \rho\in\mathbf C\textrm{ and all }h_k\textrm{ are holomorphic at }z_0,$$
then we say it is a \textit{regular solution} at $z_0$. The branch of logarithm taken has no effect on the definition, however we will always take $\log(z) = \log|z|+i\cdot\mathrm{arg}(z)$ for $\mathrm{arg}(z)\in(-\pi,\pi]$ to agree with our calculations done in Maple. Similarly, we take any fixed branch of $z\mapsto z^\rho$.

It is a fact that a singular point is regular singular if and only if all the solutions of the equation are regular solutions at that point (see \cite[15.3]{Inc27}). If all the singular points of~the~differential equation are regular singular points, then the equation is said to be \textit{Fuchsian}.
\end{defn}

The following two statements are well-known and their proofs translate directly into efficient algorithms for finding holomorphic (resp.\ regular) solutions to Fuchsian equations. In the case of holomorphic solutions, this algorithm is a simple inductive construction of the required power series, while in the second case it is the classical \textit{Frobenius method}.

\begin{thm}[{\cite[\textsection 12.22]{Inc27}}]\label{holosol}
Let $\Phi$ be an ordinary differential equation as in Definition~\ref{regsingdef}. Any regular point $z_0$ of $\Phi$ has a neighborhood $V$ which admits a full $n$-dimensional $\mathbf C$-vector space of holomorphic solutions. This space has a basis $(f_j)$ such that $f_j^{(j)}(z_0) = 1$ and $f_j^{(k)}(z_0) = 0$ for $k<j$; thus the elements $f$ in the solution space of $\Phi$ around $z_0$ correspond bijectively to arbitrary $n$-tuples $(f(z_0),f'(z_0),\ldots,f^{(n-1)}(z_0))\in\mathbf C^n$.
\end{thm}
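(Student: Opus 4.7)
The plan is to reduce the scalar $n$-th order equation to a first-order linear system and apply the standard complex-analytic Cauchy existence theorem. Setting $\mathbf{f}(z) := (f(z), f'(z), \ldots, f^{(n-1)}(z))^T$ turns $\Phi$ into the equivalent system $\mathbf{f}'(z) = A(z) \mathbf{f}(z)$, where $A(z)$ is the companion matrix whose bottom row reads $(-a_n(z), \ldots, -a_1(z))$. By hypothesis the $a_j$ are all holomorphic at $z_0$, so the entries of $A$ are holomorphic on some open disk $V := D(z_0, r) \subseteq U$ on which I may fix a uniform operator-norm bound $\|A(z)\| \leq M$.

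Next, for each prescribed initial vector $\mathbf{c} \in \mathbf{C}^n$, I would produce the corresponding holomorphic solution via Picard iteration. Starting from the constant function $\mathbf{f}_0(z) := \mathbf{c}$, I recursively set $\mathbf{f}_{k+1}(z) := \mathbf{c} + \int_{z_0}^z A(\zeta) \mathbf{f}_k(\zeta)\,\mathrm{d}\zeta$, integrating along the straight segment from $z_0$ to $z$. An elementary induction yields the bound $\|\mathbf{f}_{k+1}(z) - \mathbf{f}_k(z)\| \leq \|\mathbf{c}\|(M|z - z_0|)^{k+1}/(k+1)!$, which shows the sequence converges uniformly on compact subsets of $V$ to a holomorphic function $\mathbf{f}$. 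Passing to the limit in the integral recurrence and differentiating yields $\mathbf{f}' = A\mathbf{f}$ with $\mathbf{f}(z_0) = \mathbf{c}$. Uniqueness of such a solution follows by applying Gronwall's inequality to the difference of two solutions with identical initial data, which forces it to vanish throughout $V$.

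The assignment $\mathbf{c} \mapsto \mathbf{f}$ is then a $\mathbf{C}$-linear bijection between $\mathbf{C}^n$ and the space of holomorphic solutions of $\Phi$ on $V$, with inverse $f \mapsto (f(z_0), f'(z_0), \ldots, f^{(n-1)}(z_0))$. Choosing $\mathbf{c}$ to be the standard basis vectors $e_{j+1} \in \mathbf{C}^n$ produces exactly the basis $(f_j)_{j=0,\ldots,n-1}$ described in the statement, with $f_j^{(j)}(z_0) = 1$ and $f_j^{(k)}(z_0) = 0$ for $k < j$.

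There is no serious obstacle here; the result is essentially the complex-analytic Picard-Lindel\"of theorem, and the only delicate point is the uniform convergence estimate for the Picard iterates, which is handled by the norm bound on $A$ over $V$. An equivalent route, closer to the style of the Frobenius expansions used elsewhere in the paper, would be to substitute the ansatz $f(z) = \sum_{k \geq 0} c_k (z - z_0)^k$ directly into $\Phi$: the Taylor coefficients of the $a_j$ at $z_0$ produce an explicit recursion expressing each $c_{k+n}$ as a polynomial in $c_0, \ldots, c_{k+n-1}$, leaving $c_0, \ldots, c_{n-1}$ free, and convergence of the resulting series on a neighborhood of $z_0$ is established by the classical majorant method. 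Either argument is entirely standard and does not interact with the more delicate theory needed at regular singular points.
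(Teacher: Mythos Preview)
Your argument is correct. The paper, however, takes the power-series route you sketch only as an afterthought: it fixes an $n$-tuple of initial derivatives, uses $\Phi$ (and its successive $z$-derivatives) to read off $f^{(n)}(z_0), f^{(n+1)}(z_0),\ldots$ recursively, and then simply cites Ince \cite[12.22]{Inc27} for convergence of the resulting Taylor series; the bound $\dim\leq n$ is stated without proof. Your main argument via the companion system and Picard iteration is more self-contained---it actually establishes convergence and uniqueness rather than delegating them to a reference---at the cost of being slightly further from the Frobenius-style expansions used in the next theorem. Either route is standard and adequate here.
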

%


\begin{thm}[{\cite[\textsection 16.1]{Inc27}}]\label{frobsol}
Let $\Phi$ be an ordinary differential equation as in Definition~\ref{regsingdef}. Any regular singular point $z_0$ of $\Phi$ has a neighborhood $V$ admitting a full $n$-dimensional $\mathbf C$-vector space of regular solutions, in the sense of the definition given above.
\end{thm}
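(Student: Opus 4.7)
The plan is to carry out the classical Frobenius method in three phases: reduction to a tractable form, formal series construction, and resolution of degenerate cases. First I would translate so that $z_0 = 0$, and multiply the equation through by $z^n$ to rewrite it as
$$z^n f^{(n)} + \sum_{j=1}^n b_j(z)\, z^{n-j} f^{(n-j)} = 0,$$
where each $b_j(z) = z^j a_j(z)$ is holomorphic at $0$ by the regular singular point hypothesis. This form is what makes the ansatz $f(z) = z^\rho \sum_{k\geq 0} c_k z^k$ tractable: substituting and isolating the coefficient of $z^\rho$ produces the indicial polynomial $I(\rho)$, a polynomial of degree exactly $n$ in $\rho$ whose coefficients depend only on the values $b_j(0)$.

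Next I would attempt, for each root of $I$, to build a formal solution. For a choice of characteristic exponent $\rho_i$, the coefficients $c_k$ satisfy a recursion of the form $I(\rho_i + k)\, c_k = R_k(c_0,\ldots,c_{k-1})$, where $R_k$ is a fixed linear combination depending on the Taylor coefficients of the $b_j$ at $0$. As long as no other characteristic exponent equals $\rho_i + k$ for a positive integer $k$, the recursion is uniquely solvable, and convergence of the resulting series on some punctured disk around $0$ follows from a standard majorant argument using the holomorphy of the $b_j$. When the roots $\rho_1,\ldots,\rho_n$ are all distinct modulo $\mathbf Z$, this directly yields $n$ linearly independent regular solutions of the form $z^{\rho_i}\sum_k c_k^{(i)} z^k$, with no logarithmic terms.

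The main obstacle is the degenerate case, in which two characteristic exponents coincide or differ by a positive integer: then some $I(\rho_i + k)$ vanishes and the recursion breaks down. The standard device — and the heart of Frobenius' theorem — is to promote $\rho$ to a formal parameter: one defines $F(z,\rho) = z^\rho \sum_k c_k(\rho) z^k$, where the $c_k(\rho)$ are chosen as rational functions of $\rho$ that formally solve the recursion with $c_0(\rho) = 1$, so that the differential operator applied to $F(\cdot,\rho)$ produces only $I(\rho) z^\rho$. Differentiating in $\rho$ brings down factors of $\log z$, and evaluating the appropriate $\rho$-derivative at a root $\rho_i$ of multiplicity $m_i$ (after suitably clearing the vanishing denominators by the factor $(\rho-\rho_i)^{m_i}$) produces genuine solutions of the stated shape $z^\rho \sum_k h_k(z) \log^k(z - z_0)$. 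The combinatorial bookkeeping needed to extract exactly $n$ linearly independent solutions — organized by grouping the characteristic exponents into cosets modulo $\mathbf Z$ and tracking multiplicities within each coset — is the part requiring the most care; once this is in place, the same majorant estimate as before shows that the holomorphic parts $h_k$ all converge on a common punctured disk, completing the proof.
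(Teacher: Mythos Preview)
Your proposal is correct and follows essentially the same route as the paper's proof sketch: both translate to $z_0=0$, multiply by $z^n$, substitute the ansatz $z^\rho\sum c_kz^k$ to obtain the indicial polynomial, solve the recursion in the nondegenerate case, and in the degenerate case treat $\rho$ as a parameter and differentiate $W(z,\rho)$ in $\rho$ to produce the logarithmic solutions, grouping exponents into cosets modulo~$\mathbf Z$. The only cosmetic difference is how the vanishing denominators are cleared: the paper absorbs them into $c_0$ by setting $c_0(\rho)=f_0(\rho+1)\cdots f_0(\rho+N)$, while you multiply by $(\rho-\rho_i)^{m_i}$; both are standard variants of the same device.
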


\section{An Illustrative Example of the Computation}\label{exmpsec}

To explain some of the finer details that occur in this calculation, we will explicitly consider an example of Type II (see Definition \ref{defntypes}). Let $E\I$ be the elliptic surface with singular fibers of type $(I_1,I_2,I_3,I_6)$ over $\mathbf P^1$. By \cite{Her91}, the equation $y^2 = 4x^3-g_2\I x-g_3\I$ gives a Weierstrass model of this surface, where we put:
\begin{align*}
    g_2\I(X,Y)&\coloneqq 12(X^4 - 4X^3Y + 2XY^3 + Y^4)\\
    g_3\I(X,Y)&\coloneqq 4(2X^6 - 12X^5Y + 12X^4Y^2 + 14X^3Y^3 + 3X^2Y^4 + 6XY^5 + 2Y^6)
\end{align*}
Clearly, $g_2\I$ and $g_3\I$ are global sections of $\mathcal O_{\mathbf P^1}(4)$ and $\mathcal O_{\mathbf P^1}(6)$, respectively. Thus the $J$-invariant, given on each fiber by the following formula
$$J\I(X\!:\!Y)\coloneqq \left(\frac{(g_2\I)^3}{(g_2\I)^3-27(g_3\I)^2}\right)\!(X\!:\!Y) = \frac{4(X^4 - 4X^3Y + 2XY^3 + Y^4)^3}{27X^3Y^6(X - 4Y)(Y + 2X)^2} = \frac{(g_2\I)^3(X,Y)}{\Delta(X,Y)}$$
is a (well-defined, i.e. independent of rescaling of $[X\!:\!Y]$) meromorphic function on $\mathbf P^1$. Looking at the denominator $\Delta(X,Y)$ confirms that $E\I$ indeed has singular fibers of type $I_1,I_2,I_3,I_6$ over points $[4\!:\!1],[-1/2\!:\!1],[0\!:\!1],[1\!:\!0]$, respectively. We will write this set as $$\Sigma\I\coloneqq \{4,-1/2,0,\infty\}$$ in accordance with the chart $z\mapsto[z:1]$ (with image $\{Y\neq 0\}$).

Let $E\II$ be the pullback of $E\I$ by the automorphism $[X\!:\!Y]\mapsto [-Y/2-X\!:\!Y]$ of $\mathbf P^1$. This has the effect of permuting the points $-1/2,0,\infty$ and taking $4$ to $-9/2$ (recall that this follows Schoen's construction). Then $$\Sigma\coloneqq \Sigma\I\cup\Sigma\II = \{0,4,-9/2,-1/2,\infty\}$$ and we name its points:
$$
a\coloneqq -\frac{9}{2},\;\;\;\;\;\;
b\coloneqq -\frac{1}{2},\;\;\;\;\;\;
c\coloneqq 0,\;\;\;\;\;\;
d\coloneqq 4,\;\;\;\;\;\;
e\coloneqq \infty,
$$
The pairs of fibers of $E\I,E\II$ lying above the points $a,b,c,d,e$ are of Kodaira types, in order: $(I_0,I_1),(I_2,I_3),(I_3,I_2),(I_1,I_0),(I_6,I_6)$ (where $I_0$ means that the fiber is nonsingular)

Recall the notation of Example \ref{exmpmain}. We will first calculate $q^{ab}_{j,k}$ along the real interval $ab$, then $q^{bc}_{j,k}$ along $bc$, and so on. As discussed in Remark \ref{remrealroots}, we choose an ordering of the three roots $(r_{1}\I,r_{2}\I,r_{3}\I)$ of the fibers of $E\I$ and the three roots $(r_{1}\II,r_{2}\II,r_{3}\II)$ of the fibers of $E\II$ separately above each interval $ab$, $bc$, etc. This is represented by the following figure:

\begin{figure}[H]
  \centering
  \captionsetup{width=\linewidth}
  \includegraphics[width=\linewidth]{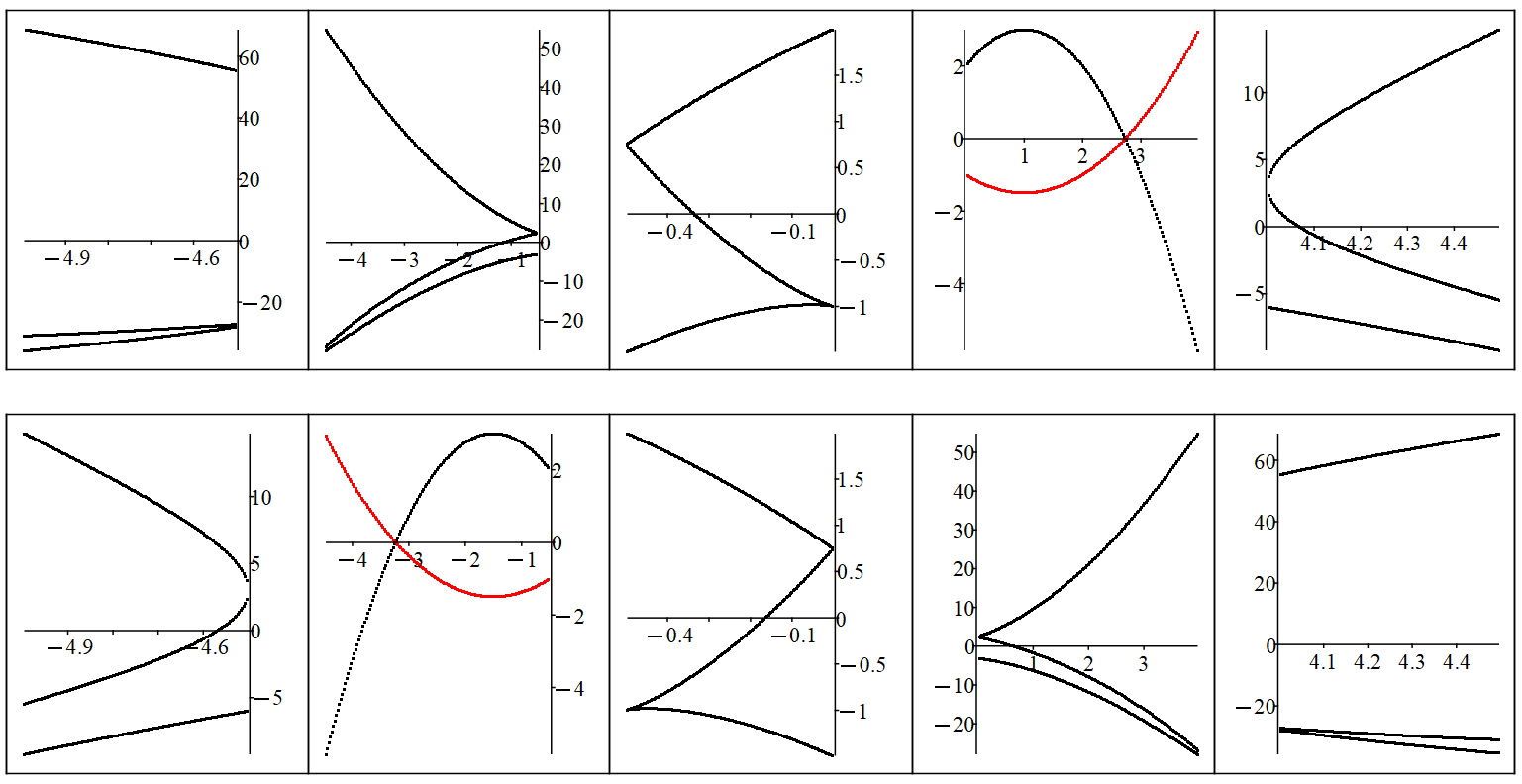}
  \caption{\textit{A schematic representation of the (real parts of) root triples $(r_{1}\J,r_{2}\J,r_{3}\J)$, $j = 1,2,$ of the surfaces $E\I$ (top) and $E\II$ (bottom) from the example above, on the intervals, in order, $(-5,a),(a,b),(b,c),(c,d),(d,5)$. Complex-conjugate pairs of roots are colored red.
  }}\label{figroots}
\end{figure}

Now, focus on the interval $ab$, on which Remark \ref{remrealroots} gives us a fiber-wise basis $\ell_1\J,\ell_2\J$ of $\mathcal H_1(E\J|_{U\setminus\Sigma},\mathbf Z)$. We need to explain how to explicitly work with the functions $P_j\I$, $P_k\II$, $Q^{ab}_{j,k}$ (in the notation of Example \ref{exmpmain}). Recall that $\omega\J = \mathrm dx\J/y\J$. We may fix a branch of~the~square root (choosing the other one has merely the effect of negating the classes $\ell_1\J,\ell_2\J$) and write these functions concretely as:
\begin{align*}\label{notation}
    P_1\J(z)&= 2\int_{r\J_{1}(z)}^{r\J_{2}(z)} \omega_+\J(z) &
    \omega_+\J(z)&\coloneqq\frac{\mathrm{d}x\J}{\sqrt{4(x\J)^3-x\J\cdot g_2\J(z,1)-g_3\J(z,1)}}\\
    P_2\J(z)&= 2\int_{r\J_{2}(z)}^{r\J_{3}(z)} \omega_+\J(z) &
    Q^a_{j,k}(z)&= \int^z_a P_j\I(t)\cdot P_k\II(t) \;\;\!\mathrm{d}t
\end{align*}
This allows us to directly compute $P_j\I(z_i)P_k\II(z_i)$ at a few points $z_i$ near $a$. We will also denote by $p\I$, $p\II$, $q^{ab}$ the vectors of these ($2$ or $4$, respectively) components. Any~arbitrary~choice~made here is inconsequential, as it at most multiplies the final results by a constant $\pm 1$.

Next, we know an explicit solution of the differential equation $\Phi$ around $c = 0\in\Sigma\I\cap\Sigma\II$ (this can, in principle, be done around almost any point in the plane; we choose $c$ for convenience):
$$(g_2\I)^{-1/4}(z)\cdot (g_2\II)^{-1/4}(z)\cdot{_2}H_1\!\left(\frac{1}{12},\frac{5}{12};1;\frac{1}{J\I(z)}\right)\cdot {_2}H_1\!\left(\frac{1}{12},\frac{5}{12};1;\frac{1}{J\II(z)}\right) \;\; = 
\;\;\;\;\;\;\;\;\;\;\;$$
$$\;\;\;\;\;\;\;
=\;\; \frac{1}{3} - \frac{7}{18}z + \frac{367}{648}z^2 - \frac{5215}{5832}z^3 + \frac{416773}{279936}z^4 - \frac{12911183}{5038848}z^5 + \frac{613914581}{136048896}z^6 -\ldots
$$\vspace{0.1pt}

\noindent
After differentiating this power series $4$ times, we set up a system of infinitely many linear equations. Solving it for sufficiently many coefficients (in this case $11$) of each polynomial gives us the following differential equation, which must be equal to $\Phi$ (by Remark \ref{remwelldefde}):
\begin{align*}
     0 \;=\;& f^{(4)}\cdot z^2(4z + 1)(14z^2 + 7z + 360)(2z + 1)^2(2z + 9)^2(z - 4)^2\\
    +\;& f^{(3)}\cdot (13440z^{10} + 33600z^9 + 71360z^8 + \ldots + 21316608z^2 + 2332800z)\\
    +\;& f''\cdot (57344z^9 + 129024z^8 + 988352z^7 + \ldots + 103487688z^2 + 25093152z + 1866240)\\
    +\;& f'\cdot (75264z^8 + 150528z^7 + 2196896z^6 + \ldots + 47470612z^2 + 29102904z + 5298048)\\
    +\;& f\cdot (21504z^7 + 37632z^6 + 914880z^5 + \ldots - 7616436z^2 - 1921500z - 160704)
\end{align*}
(For completeness, note that we can find also the equations of the periods $P\I, P\II$ of $E\I, E\II$:
$P\I$ satisfies $z(2z + 1)(z - 4)f'' + (6z^2 - 14z - 4)f' + (2z - 2)f = 0$, degenerate at $b,c,d$, and
$P\II$ satisfies $z(2z + 9)(2z + 1)f'' + (12z^2 + 40z + 9)f' + (4z + 6)f = 0$, degenerate at $a,b,c$. Thus all these periods are holomorphic on $\mathbf C\setminus\{a,b,c,d\}$.)
\smallskip

We now start at point $a$ and apply the Frobenius method (Theorem \ref{frobsol}) to get $4$ linearly independent (multivalued) solutions of $\Phi$ around its regular singular point $a$. Next, we explicitly calculate the expansions around $a$ of all $4$ functions $P_j\I P_k\II$: by calculating first their values at some points $a+\varepsilon_1,a+\varepsilon_2,a+\varepsilon_3,a+\varepsilon_4$ in the interval $ab$ (in practice, $\varepsilon_i\ll 1$), then finding a unique linear combination of solutions which gives these values at these points.

The above equation is degenerate (i.e. has singularities) at $a,b,c,d$, but also at $-1/4$ and at $-1/4\pm (13/28)\sqrt{-119}$. The functions $P_j\I P_k\II$ will have only removable singularities at the extra points of degeneracy, being products of functions holomorphic at those points. Their actual singularities in $\mathbf{C}$ remain only at $a,b,c,d$. Nevertheless, convergence of series constructed by the Frobenius method is very slow at points of degeneracy and we will attempt to avoid~them (see Figure \ref{analcontfig} below).

Integrating term-by-term, we get an expansion of $Q^a_{j,k}(z)$ around $a$, for $j,k\in\{1,2\}$. Finally, we may extend the function along $ab$ as in the previous section to calculate $q^{ab}_{j,k} = Q^a_{j,k}(b)$, using the fact that $Q^a_{j,k}$ satisfies the equation $\Phi'$ of degree $5$ as follows. This procedure is sometimes called ``numerical holomorphic continuation'' (see \cite{Mez16} for a more general discussion, as well as information on its implementation in a dedicated SageMath package).

\begin{dscr}
Suppose that the vector space of solutions of $\Phi$ in some disk around a point $z_0$ is spanned by functions $F_1,F_2,F_3,F_4$, which we find as power series in $z-z_0$ (and $\log(z-z_0)$, if $z_0$ is a singular point). The solutions of $\Phi'$ are then locally spanned by the constant function $G_0 = 1$ and the primitives $G_i=\int \!F_i$, $\;i\in\{1,2,3,4\}$. (This method of calculation is much less computationally intensive than directly solving an equation of order $5$.)

To find coefficients $c_i\in\mathbf C$ such that $Q^a_{j,k}(z) = \sum^4_{i=0}c_iG_i$, we solve the following system of $5$ linear equations for some generic choice of point $z$ in the disk (the determinant of this matrix is not identically~$0$):
\vspace{-3pt}
$$
\left[\begin{matrix}
(Q^a_{j,k})(z)\\
(Q^a_{j,k})'(z)\\
(Q^a_{j,k})''(z)\\
(Q^a_{j,k})'''(z)\\
(Q^a_{j,k})''''(z)
\end{matrix}\right]
=
\left[\begin{matrix}
1 & G_1(z) & G_2(z) & G_3(z) & G_4(z)\\
0 & G_1'(z) & G_2'(z) & G_3'(z) & G_4'(z)\\
0 & G_1''(z) & G_2''(z) & G_3''(z) & G_4''(z)\\
0 & G_1'''(z) & G_2'''(z) & G_3'''(z) & G_4'''(z)\\
0 & G_1''''(z) & G_2''''(z) & G_3''''(z) & G_4''''(z)
\end{matrix}\right]
\cdot
\left[\begin{matrix}
c_0\\
c_1\\
c_2\\
c_3\\
c_4
\end{matrix}\right]
$$
Keep in mind that, if an expansion of $Q^a_{j,k}$ is known in some disk $D_1$ and we are extending it to a disk $D_2$ centered at $z_0$, then we should make sure that there is ample intersection between $D_1$ and $D_2$, and choose $z\in D_1\cap D_2$ not too close to any boundary (to avoid slow convergence).
\end{dscr}

Because $a = -9/2$ and $b = -1/2$, there are no singularities in the open interval $ab$. Thus, we may extend $Q^a_{j,k}$ along $ab$ by considering only $4$ series expansions in disks around the points, in order: $-9/2$ (of radius $4$), $-2$ (of radius $3/2$), $-1$ (of radius $1/2$), $-1/2$ (of radius $1/2$)

This completes the calculations in the interval $ab$; we may now proceed to the interval~$bc$.


\begin{figure}[H]
  \centering
  \captionsetup{width=\linewidth}
  \includegraphics[width=\linewidth]{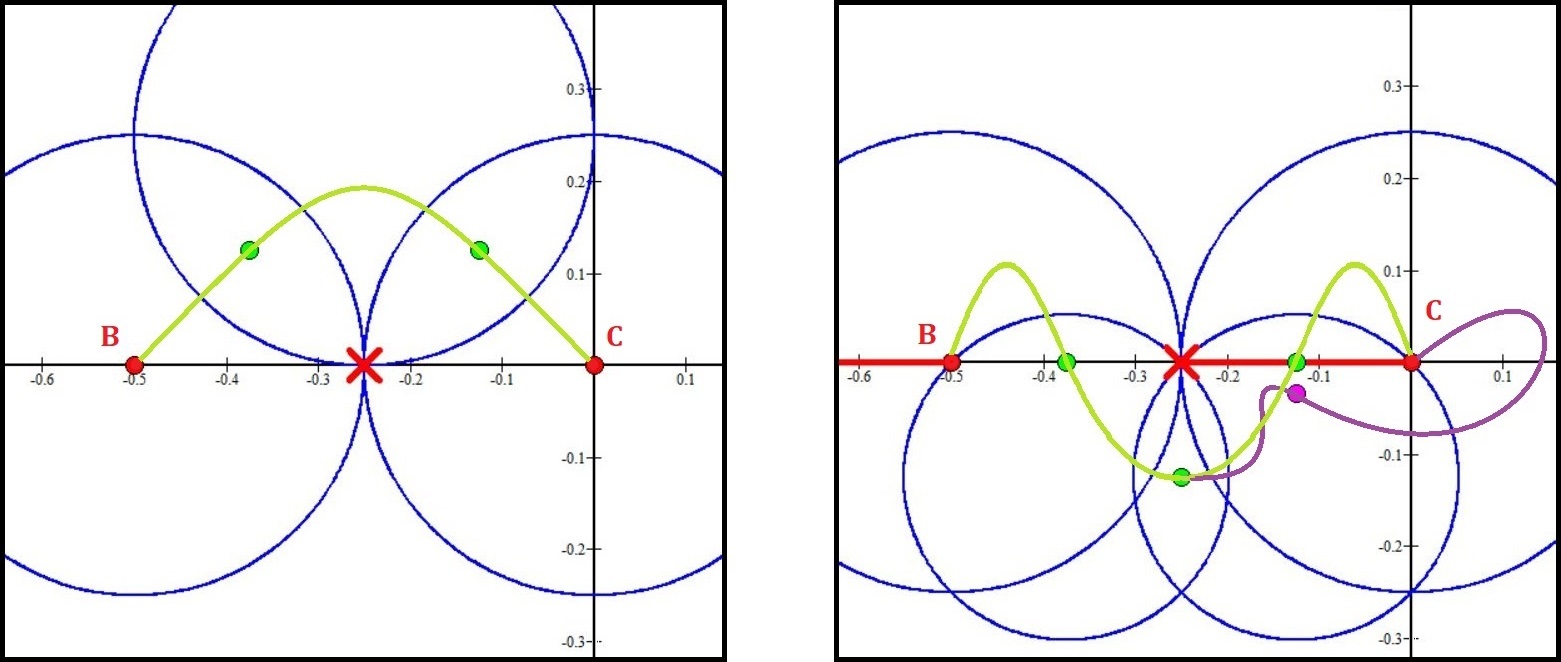}
  \caption{\textit{Two possible extensions from $b$ to $c$ (red dots). We start in a disk of convergence centered at $b$, and end in one at $c$. On the left picture, we can imagine we're integrating along the green curve as we pass through an intermediate disk (we extend $Q^b$ by comparing its $n$-th derivatives at the green points, $n\in\{0,1,2,3,4\}$). We avoid expanding a series at the point of degeneracy $-1/4$ (cross), as the convergence is slow there, hence the results get imprecise.\\
  On the right picture, we integrate through the lower half-plane.\! The result need not be different from the one obtained on the left; it remains the same if we compare at the green points. However, if we perform the last comparison at the purple point instead (with negative imaginary part), we pass under the line (red) marking the end of a branch of the logarithm function $z\mapsto\log(z-c)$ (we let the values on this line continue the branch on $\mathbf{H}_+$, which is consistent with the convention used in Maple) and the effect is as if winding around $c$ by integrating along the purple line, giving a different result. This is avoided by restricting ourselves to $\overline{\mathbf{H}_+}$.}}\label{analcontfig}
\end{figure}

Before explaining how the different intervals (and the transitions between them) are handled, it is convenient now to compare what happens if an interval contains a singular point of $\Phi'$,~such as $-1/4\in bc$. The figure presented above illustrates why we always choose to work in the closed upper half-plane $\overline{\mathbf{H}_+}$ (as already noted in Remark \ref{remuphpdeform}).
Up to accounting for singular points, the procedure used to produce $q^{ab} = (q^{ab}_{1,1}, q^{ab}_{1,2}, q^{ab}_{2,1}, q^{ab}_{2,2})$ can now simply be repeated to produce vectors $q^{bc}$, $q^{cd}$, $q^{de}$, $q^{ea}$. Note that, since $e = \infty$, the last two should be calculated in the other chart, $w\mapsto[-1:w]$ (cf. Example \ref{exmpdef3form}). This does not present any difficulty (and it is,~moreover, reassuring to check that $q^{ab}$ yields the same values when computed in both charts). 

\begin{rem}
Very importantly, the natural choice of bases $\ell_1\J,\ell_2\J$ is dependent on the interval (recall that this convention was introduced in Remark \ref{remrealroots}). We will say it is \textit{adapted} to it. Let $$\ell^{ab}\coloneqq (\ell_1\I,\ell_2\I)\otimes(\ell_1\II,\ell_2\II)  = (\ell_1\I\otimes\ell_1\II,\; \ell_1\I\otimes\ell_2\II,\; \ell_2\I\otimes\ell_1\II,\; \ell_2\I\otimes\ell_2\II)$$
denote the vector of bases adapted to $ab$ in our calculations. To have any chance of combining these partial periods into periods of $\widetilde{\omega}$, we would like to compare $\ell^{ab}$ and $\ell^{bc}$ over the~closed~upper half-plane $\overline{\mathbf{H}_+}$. Calculating this at any point yields the \textit{transformation matrix} $M_b$, a $4\times 4$ integer matrix such that $\ell^{bc} = M_b\cdot\ell^{ab}$. Consequently, $(Q^b)' = M_b\cdot(Q^a)'$.

After having extended $Q^a$ to the vicinity of $b$ (through $\overline{\mathbf{H}_+}$), we may calculate $M_b$ from~the property $M_b\cdot(Q^a)' = P\I P\II$, where $(P\I P\II)_{j,k}\coloneqq P_j\I\cdot P_k\II$ for $\ell_1\J,\ell_2\J$ adapted to the interval $bc$. In particular, this shows that, in the calculations done in this section, we do not have~to~solve $\Phi$ at $b$ (which might take time in practice), but rather simply compute $P\I P\II$ at $4$ points near $b$ to calculate $M_b$ and then use the formula $Q^b(z) = M_b\cdot(Q^a(z)-Q^a(b))$. After that, we proceed with our work in the interval $bc$ as usual.
\smallskip

As a technical detail: Computing $M_b$ and checking whether its values are integers was a very useful method of checking for programming errors while computing the results of this paper. In examples of Type I and III, repeating this procedure naively yields only a $3\times 3$ matrix (since then $d_0=3$), from which the full $4\times 4$ matrix can be recovered in a straightforward way. 
\end{rem}

Similarly, we define matrices $M_c$, $M_d$, $M_e$ and $M_a$.

Going further, we will have need for the \textit{monodromy matrix} of $(Q^a)'$ at $a$, and similarly for $b,c,d,e$. Indeed, each function $(Q^a_{j,k})' = P_j\I\cdot P_k\II$ is multivalued near $a$. A counter-clockwise turn around $a$ has the effect of replacing $(Q^a)'$ by some $N_a\cdot (Q^a)'$ (since it preserves $\Phi$). The integer matrix $N_a$ is easily calculated, as we can explicitly compute $P_j\I(z)\cdot P_k\II(z)$ at any~given~$z$.
We put all of this together in a useful form:

\begin{defn}\label{defexmp2}
To work with a single choice of basis $\ell_1\J,\ell_2\J$, we make the arbitrary choice of adapting all results to the interval $ab$. This yields the vectors
$$
r^{ab}\coloneqq q^{ab},\;\;\;\;\;\;\;\;
r^{bc}\coloneqq M_b^{-1}\cdot q^{bc},\;\;\;\;\;\;\;\;
r^{cd}\coloneqq  M_b^{-1}\cdot M_c^{-1}\cdot q^{cd},\;\;\;\;\;\;\;\;
\ldots
$$
and the monodromy matrices
$$
\Theta_a\coloneqq N_a,\;\;\;\;\;\;\;\;
\Theta_b\coloneqq M_b^{-1}\cdot N_b\cdot M_b,\;\;\;\;\;\;\;\;
\Theta_c\coloneqq M_b^{-1}\cdot M_c^{-1}\cdot N_c\cdot M_c\cdot M_b,\;\;\;\;\;\;\;\;
\ldots
$$
which is the data we need in the following sections (see Remark \ref{remsecondmain}).
\end{defn}

\begin{rem}
Note that these results can already give us some information on the total periods: Suppose that a $3$-cycle $\gamma$ in $X|_{\mathbf P^1\setminus\Sigma}$ lies above a loop $\alpha\subseteq\mathbf P^1\setminus\Sigma$ which winds once around $b$, twice around $c$, then five times around $a$. Equivalently, there exists a vector $v\in\mathbf Z^4$ such that $v = (\Theta_b\Theta_c^2\Theta_a^5)^\top\cdot v$. The associated period is seen to be equal to the following integral
$$\int_\gamma\widetilde{\omega} = \big\langle v,\;\; r^{ab}+\Theta_br^{bc}+\Theta_b\Theta_c^2(-r^{ab}-r^{bc})\big\rangle, \;\;\textrm{ for the scalar product }\langle\,,\rangle:\mathbf Z^4\times\mathbf C^4\rightarrow\mathbf C^4$$
by tightening the loops around $a,b,c$. This will be greatly generalized in the following section, where we find a systematic way to express all periods of $X$ and $\widehat{X}$.

It is reassuring to check that, as one may expect, $M_eM_dM_cM_bM_a = I$ and $\Theta_a\Theta_b\Theta_c\Theta_d\Theta_e = I$. Also, $r^{ab}+r^{bc}+r^{cd}+r^{de}+r^{ea} = 0$ and $r^{ab}+\Theta_br^{bc}+\Theta_b\Theta_cr^{cd}+\Theta_b\Theta_c\Theta_dr^{de}+\Theta_b\Theta_c\Theta_d\Theta_er^{ea} = 0$ (corresponding to trivial loops in the upper and lower half-plane, respectively). In practice,~this check prevents banal calculation errors.
\end{rem}

\begin{rem}
On our PC, the procedure outlined in this section took 3-4 minutes per interval~on average, for up to 20 minutes in total. For the interval $ab$, the single most time-consuming step is the initial calculation of the four functions $P_j\I P_k\II$ at the points $a+\varepsilon_i$, $i=1,2,3,4$, often taking more than 1 minute (cf.\ Remark \ref{remuphpdeform}) since it requires the precise computation of elliptic integrals. 
Afterwards, the solutions of the differential equation obtained as expansions~around the starting point $a$ using the (very fast) Frobenius algorithm must be integrated term-by-term, which is again comparatively slow. This expansion-and-integration~step repeats 3 times per interval on average (once for every numerical holomorphic continuation, to obtain $G_1,G_2,G_3,G_4$), which essentially accounts for the remainder of the runtime.
\end{rem}

To summarize, we have obtained vectors $q^{ab},\ldots,q^{ea}$ and transformation matrices $M_a,\ldots,M_e$, then produced vectors $r^{ab},\ldots,r^{ea}$ and monodromy matrices $\Theta_a,\ldots,\Theta_e$ (all of them adapted to the interval $ab$). For the explicit example considered, we have provided this data at:\\
\centerline{\url{https://github.com/donlagic-azur/numcalc-schoen-supplement/tree/main/sect4exmp}}


\section{Determining the $\mathbf Z$-Module of Periods}

Going forward, we will be more interested in the singular fibers of the threefolds $X$ and $\widehat{X}$. Recall the following crucial statement (stated as Proposition C.11 in \cite{PS08} without proof):

\begin{prop}\label{propdefret}
Let $\mathbf D\subseteq\mathbf C$ denote the unit disk and let $Y\rightarrow\mathbf D$ be a proper holomorphic map which is smooth over $\mathbf D^*\coloneqq\mathbf D\setminus\{0\}$. Then there is a strong deformation retract of~$Y$~onto~$Y_0$.
\end{prop}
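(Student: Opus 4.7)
The plan is to construct a continuous family $\{r_s : Y \to Y\}_{s \in [0,1]}$ satisfying $r_0 = \mathrm{id}_Y$, $r_1(Y) \subseteq Y_0$, and $r_s|_{Y_0} = \mathrm{id}_{Y_0}$ for every $s$. I would base the construction on Thom's first isotopy lemma applied to a Whitney stratification of $Y$ compatible with $Y_0$.

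First, I would endow $Y$ with a locally finite Whitney stratification $\mathcal{S}$ such that $Y_0$ is a union of strata and $f : Y \to \mathbf{D}$ restricts to a submersion on each stratum meeting $Y \setminus Y_0$. Such $\mathcal{S}$ exists by the theorem of Whitney--Hironaka on stratifications of complex analytic sets: one iteratively passes to singular loci of $Y_0$ (and the closures of the resulting lower-dimensional pieces), then refines further to enforce Whitney's conditions (a) and (b) between incident strata. The hypothesis that $f$ is smooth over $\mathbf{D}^{*}$ makes the submersion property automatic on $Y \setminus Y_0$ without further refinement. Next, on this stratified space I would construct a continuous stratified vector field $V$ which vanishes on $Y_0$, is tangent to every stratum, and whose pushforward along $f$ equals the radial vector field $-t\,\partial/\partial t - \bar t\,\partial/\partial \bar t$ on $\mathbf{D}^{*}$. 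Local lifts of this radial field exist by the submersion property on each stratum; a partition of unity subordinate to the stratification patches them into the desired global $V$. Properness of $f$ then makes the flow $\Phi_s$ of $V$ complete, and the reparametrization $r_s(y) \coloneqq \Phi_{-\log(1-s)}(y)$ for $s \in [0,1)$, extended by $r_1(y) \coloneqq \lim_{s \to 1^{-}} r_s(y)$, is the candidate retraction.

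The main obstacle, and the reason this is not a straightforward application of flow theory, is showing that the limit defining $r_1$ exists pointwise and that $r_s$ is jointly continuous in $(s,y)$ up to $s = 1$. This is precisely the content of Thom's first isotopy lemma: the Whitney conditions guarantee that an integral curve in an open stratum, as its image in $\mathbf{D}$ shrinks to $0$, accumulates on exactly one point of a boundary stratum contained in $Y_0$, with continuous dependence on initial conditions. Properness of $f$ keeps trajectories inside compact subsets of $Y$, so no escape at infinity occurs, and the hypothesis that $f$ is only assumed smooth on $\mathbf{D}^{*}$ (not on all of $\mathbf{D}$) enters only through the stratification, not through the flow construction. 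Combining these ingredients gives a strong deformation retract of $Y$ onto $Y_0$, which is what we wanted.
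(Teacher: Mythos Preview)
Your approach via Whitney stratifications and Thom--Mather theory is correct in outline and genuinely different from the paper's argument. The paper instead reduces to the case where $Y$ is smooth and $Y_0$ is a normal crossing divisor by taking a log resolution $\sigma:\widetilde{Y}\to Y$, invokes Clemens' classical result for that case, and then pushes the resulting retraction $F$ on $\widetilde{Y}$ down to $Y$ by the formula $G(y,t)=\sigma(F(\sigma^{-1}(y),t))$ for $y\notin Y_0$ and $G(y,t)=y$ for $y\in Y_0$; continuity of $G$ is then checked directly using that $F$ fixes $\widetilde{Y}_0$ pointwise and that $\sigma$ is proper.

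In terms of trade-offs: the paper's proof is short and self-contained once one accepts two black boxes (Hironaka's resolution and Clemens' retraction for the NCD case), whereas your route avoids resolution of singularities entirely but leans on the full Thom--Mather machinery. One point to tighten in your sketch: patching local lifts of the radial field by a partition of unity does not by itself yield a vector field whose flow extends continuously across strata; what you need is a \emph{controlled} stratified vector field relative to a system of tube data, constructed inductively over the strata. That is exactly what goes into the proof of the first isotopy lemma you cite, so your invocation of it is legitimate, but the intermediate sentence about partitions of unity understates what is required for the limit $r_1$ to exist and depend continuously on $y$.
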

\begin{prf}
When $Y$ is smooth and $Y_0$ is a normal crossing divisor, the statement is classically shown in \cite[\textsection 3]{Cle77}. Otherwise, we may reduce to this case by a \textit{log resolution} ${\sigma : \widetilde{Y}\rightarrow Y}$~(see~\cite[Theorem 5.4.2]{AHV18}), an isomorphism away from $\widetilde{Y}_0 = \sigma^{-1}(Y_0)$, which is a normal crossing~divisor.
If $F : \widetilde{Y}\times I\rightarrow\widetilde{Y}_0$ is a strong deformation retract, then we\footnote{This proof was written following the post of \textit{AG learner} at: \url{ https://math.stackexchange.com/a/4401330}}
define a map $G : Y\times I\rightarrow Y_0$ by:
$$G(y,t)\coloneqq\left\{\begin{aligned}
    &\sigma(F(\sigma^{-1}(y),t)) & & \textrm{ if }y\notin Y_0\\
    &y & & \textrm{ if }y\in Y_0
\end{aligned}\right.$$
It is easily seen to be continuous, therefore also a strong deformation retract.
\end{prf}

Now, consider our usual setup: A threefold $X = E\I\times_{\mathbf P^1}E\II$, where $E\J\rightarrow\mathbf P^1$ is an elliptic surface with all singular fibers semistable and over $\Sigma\J$. Let $\Sigma\coloneqq \Sigma\I\cup\Sigma\II$. We simultaneously study both $X$ and its small resolution $\widehat{X}$ by considering $Y\in\{X,\widehat{X}\}$. The surfaces $E\I,E\II$~are assumed to have sections, denoted by $\textbf 0\I,\textbf 0\II$ respectively. 

\begin{rem}\label{remseclift}
The section $\textbf 0\J$ intersects transversely each fiber $E\J_z$, $z\in\mathbf P^1$, hence it does not pass through any of the nodes in any singular fiber $E\J_s$, $s\in\Sigma$.
This shows that both of the surfaces $\textbf 0\I{\times_{\mathbf P^1}}E\II$ and $E\I{\times_{\mathbf P^1}}\textbf 0\II$ lift (uniquely) from $X$ to $\widehat{X}$, since the small resolution is an isomorphism everywhere away from products of two nodes.
Therefore, there exist sections $\iota\I,\iota\II$ of the projections $\mathrm{pr}\J : Y\rightarrow E\J$ for $Y\in\{X,\widehat{X}\}$. In particular, the composition
$$\mathrm{H}_k(E\I_z)\oplus\mathrm{H}_k(E\II_z)\xrightarrow{\iota\I_*+\iota\II_*}\mathrm{H}_k(Y_z)\xrightarrow{\mathrm{pr}\I_*,\,\mathrm{pr}\II_*}\mathrm{H}_k(E\I_z)\oplus\mathrm{H}_k(E\II_z)$$
is the identity map on the fibers over any point $z\in\mathbf P^1$. Induced by the resulting projection
$$\pi_z \;\; :\;\; \mathrm{H}_k(Y_z)\xrightarrow{\mathrm{pr}\I_*,\,\mathrm{pr}\II_*}\mathrm{H}_k(E\I_z)\oplus\mathrm{H}_k(E\II_z)\xrightarrow{\iota\I_*+\iota\II_*}\mathrm{H}_k(Y_z)$$
is a splitting $\mathrm{H}_k(Y_z) = \mathrm{H}'_k(Y_z)\oplus\pi_z(\mathrm{H}_k(Y_z))$. For example, $\mathrm{H}'_2(X_z)\cong\mathrm{H}_1(E_z\I)\otimes\mathrm{H}_1(E_z\II)$ by the K\"unneth formula (cf.~Example~\ref{exmpdefgm}). The corresponding local system over $\mathbf P^1\setminus\Sigma$~splits~similarly.
\end{rem}
\smallskip

\begin{defn}\label{defnvancyc}
Fix a point $p\in\mathbf P^1\setminus\Sigma$ and, for each $s\in\Sigma$, an open path $ps\subseteq\mathbf P^1\setminus\Sigma$ such that no two paths intersect (drawn below in Figure \ref{diag3}(a)). Applying Proposition \ref{propdefret} to an open neighborhood $\mathcal D$ of the closed path $\overline{ps}$ for $s\in\Sigma$, we get a composition:
$$\mathrm{H}_2(Y_p)\rightarrow\mathrm{H}_2(Y|_{\mathcal D})\cong\mathrm{H}_2(Y_s)$$
We denote the kernel of this map by $VC_s = VC_s(Y)$ and call its elements the \textit{vanishing cycles} at $s$. The map is functorial with respect to the projections $Y\rightarrow E\J$, hence we have a splitting $VC_s = VC'_s\oplus\pi_p(VC_s)$ by Remark \ref{remseclift}.
\end{defn}

Recall our fixed $3$-form $\widetilde{\omega}$ on $Y$ from Section 2. Denote by $\mathcal I : \mathrm H_3(Y)\rightarrow\mathbf C$ the map given by $\mathcal I([\gamma]) = \int_\gamma\widetilde{\omega}$. We are interested in the finite-rank $\mathbf Z$-module image $\mathrm{im}(\mathcal I)\subseteq\mathbf C$. The importance of introducing vanishing cycles comes from the following observation: Pick $\ell_s\in VC_s$ for each $s\in\Sigma$ such that $\sum_s\ell_s=0$ in $\mathrm H_2(Y_p)$. Then the following sum
$$\mathcal I_0\big((\ell_s)_{s\in\Sigma}\big)\coloneqq\sum\nolimits_{s\in\Sigma} q^{ps}_{\ell_s}
\;\;\textrm{ (where }
q^{ps}_{\ell_s}\coloneqq\int_{\ell_s\times ps}\widetilde{\omega}
\;\textrm{ as in Definition \ref{defnpartres}, except }p\notin\Sigma\textrm{)}$$
is an element of $\mathrm{im}(\mathcal I)$. Indeed, we argue as follows:
\bigskip

For each point $\tilde{s}\in\Sigma$, the $3$-chain $\ell_{\tilde{s}}\times p\tilde{s}$ represents some class in the relative homology group $\mathrm H_3(Y,\;Y_p\sqcup\bigsqcup_{s\in\Sigma}Y_s)$. Considering the long exact sequence in relative homology
\begin{center}\begin{tikzcd}
    \mathrm{H}_3(Y_p)\oplus\bigoplus
    \nolimits_s\mathrm{H}_3(Y_s)\arrow[r] & \mathrm H_3(Y)\arrow[r] & \mathrm H_3\left(Y,\;Y_p\sqcup\bigsqcup_sY_s\right)\arrow[r, "\delta"] & \mathrm{H}_2(Y_p)\oplus\bigoplus
    \nolimits_s\mathrm{H}_2(Y_s)
\end{tikzcd}\end{center}
we see that the sum $\sum_s[\ell_s\times ps]$ is in $\ker\delta$ by definition of $\ell_s$ and of vanishing cycles. It thus admits a preimage $[\gamma]\in\mathrm H_3(Y)$ and we conclude $\mathcal I([\gamma]) = \mathcal I_0((\ell_s)_{s\in\Sigma})$ since integration of $\widetilde{\omega}$ over $3$-cycles contained in individual fibers is trivial (Remark \ref{remint3formvanish}).

Note that we only define $\mathcal I_0$ on the kernel of the sum map $\bigoplus\nolimits_sVC_s\rightarrow\mathrm{H}_2(Y_p)$. Moreover, $\mathcal I_0((\pi_p(g_s))_{s\in\Sigma}) = 0$ because, in the notation of Remark \ref{remseclift}, the integral of $\widetilde{\omega}$ is trivial over any $3$-chain contained in $\textbf 0\I{\times_{\mathbf P^1}}E\II$ or $E\I{\times_{\mathbf P^1}}\textbf 0\II$ (again, see Remark \ref{remint3formvanish}). This~shows~that
$$\mathcal I_0\left(\mathrm{ker}\!\left(\bigoplus\nolimits_{s}VC_s\rightarrow\mathrm{H}_2(Y_p)\right)\right) = \mathcal I_0\left(\mathrm{ker}\!\left(\bigoplus\nolimits_{s}VC'_s\rightarrow\mathrm{H}_2(Y_p)\right)\right)$$
and we summarize the consequences below.
\medskip

\begin{rem}\label{remsecondmain}
We are able to effectively calculate the latter of these two sets, as this remark will explain. First of all, we have $q^{ps} = q^{pa}+q^{as}$ for a fixed point $a\in\Sigma$, so that
$$\mathcal I_0((\ell_s)_{s\in\Sigma}) \;=\; \sum\nolimits_{s\in\Sigma} q^{ps}_{\ell_s} \;=\;q^{pa}_{\Sigma_s\ell_s}+\sum\nolimits_{s\in\Sigma} q^{as}_{\ell_s}$$
but $\Sigma_s\ell_s = 0$ and $q^{pa}_{\Sigma_s\ell_s} = 0$. Therefore the value $\mathcal I_0((\ell_s)_{s\in\Sigma})$ does not depend on $p$, and in our explicit calculations we may in fact replace $q^{ps}_{\ell_s}$ by $q^{as}_{\ell_s}$. (The assumption $p\notin\Sigma$ is only~convenient for theoretical purposes in this section.)

In other words, the only integrals explicitly needed in this section are exactly the partial periods $q^{as},s\in\Sigma$ which were calculated in Section 3. For convenience of calculation, we will fix some basis of $\mathrm{H}'_2(Y_p)\cong\mathrm{H}_1(E_p\I)\otimes\mathrm{H}_1(E_p\II)$. (In the notation of the example covered in Section 4, we define vectors $q^{ad}\coloneqq r^{ab}+r^{bc}+r^{cd}$, etc.) Apart from this, we will consider the monodromy matrices $\Theta_s$ of $\mathcal H_1(E\I|_{\mathbf P^1\setminus\Sigma},\mathbf Z)\otimes\mathcal H_1(E\II|_{\mathbf P^1\setminus\Sigma},\mathbf Z)$ at $s$ (cf. beginning of Section 3) in this same basis. These are easily calculated; for example, see Definition \ref{defexmp2}.

With the statement of Theorem \ref{thmperMV} as written below, we have now completely described the group $\mathcal I(\mathrm{H}_3(Y))$ of periods of $\widetilde{\omega}$ on $Y$ in terms of vanishing cycles. The algorithm to determine the $\mathbf Z$-module $\mathcal I_0\big(\mathrm{ker}(\bigoplus\nolimits_{s}VC'_s\rightarrow\mathrm{H}_2(Y_p))\big)$ is straightforward:
\begin{enumerate}
    \item Determine the groups $VC'_s\subseteq\mathrm{H}'_2(Y_p)$ (this is done in the next section; we explicitly give $VC'_s$ using the matrix $\Theta_s$ via Proposition \ref{propsingvanish} and Corollary \ref{cor2}). The collection of these vanishing cycles is how our program encodes the difference between the threefolds $X$ and $\widehat{X}$ (so far, our partial periods are the same for both). After taking small resolutions, it should have fewer vanishing cycles, hence fewer periods.
    \item Consider the $\mathbf Z$-module $\bigoplus_{S}VC'_S$. Take the kernel of the map $(\ell_s)\mapsto\sum_s \ell_s$ (its basis~can be found using the Lenstra–Lenstra–Lov\'asz algorithm, \cite{LLL82}), then get a $\mathbf Z$-basis for its image in $\mathbf C$ under the linear map $\mathcal I_0 : (\ell_s)\mapsto\sum_s q^{as}_{\ell_s}$ (this is another application of the LLL algorithm). By Theorem \ref{thmperMV} below, this image is exactly $\mathcal I(\mathrm{H}_3(Y))$.
\end{enumerate}
Given a finite set $F\subseteq\mathbf C$, the application of the LLL algorithm to find a basis for the~group~generated by $F$ is standard, however it is worth pointing out that its success is tied to the precision (number of digits) with which we express the complex numbers in $F$; it might fail to determine relations between numbers expressed to a high precision. In practice, we first set the elements of $F$ to a much smaller precision and then use the LLL algorithm (in our case available~as~part of Maple's \textit{Integer Relations} package) which produces a set $B$ of integer combinations of elements of $F$. The set $B$ is a candidate for the desired basis and, because of the reduced precision, it is certainly a $\mathbf Z$-independent set inside of $\mathbf C$. We now compute the obtained integer combinations at a higher precision, then confirm that $F$ is indeed contained in the $\mathbf Z$-span of $B$. Our precision of 30 decimal places is sufficient to identify a correct basis, as in all considered cases the integer combinations in the construction of $B$ have coefficients with no more than 2 digits.
\end{rem}

\begin{thm}\label{thmperMV}
All periods of $Y$ are of the form described above. That is:
$$\mathcal I\,\big(\mathrm{H}_3(Y)\big) = \mathcal I_0\left(\mathrm{ker}\!\left(\bigoplus\nolimits_{s}VC_s\rightarrow\mathrm{H}_2(Y_p)\right)\right)$$
\end{thm}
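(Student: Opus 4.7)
The inclusion $\supseteq$ has been established in the discussion preceding the statement. For the reverse, my plan is to use the long exact sequence of the pair $(Y,Y|_{U_T})$, where $U_T\subset\mathbf P^1$ is a thin tubular neighborhood of the tree $T$ with vertex set $\{p\}\cup\Sigma$ and edges the paths $\overline{ps}$. The complement $V=\mathbf P^1\setminus\overline{U_T}$ is an open disk (removing a finite tree from $S^2$ leaves a disk), so $Y|_{\overline V}\cong Y_p\times\overline V$ is a trivial bundle by Ehresmann. The crucial geometric step is to construct a strong deformation retract $Y|_{U_T}\twoheadrightarrow Y|_T$ by patching the Clemens retracts of Proposition~\ref{propdefret} applied to $Y|_{D_s}\twoheadrightarrow Y_s$ on small disks $D_s\ni s$ with Ehresmann trivializations of the smooth fibration on $U_T\setminus\bigcup_s D_s$, glued along $\partial D_s$ by a partition of unity.

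By excision and K\"unneth, $\mathrm H_3(Y,Y|_{U_T})\cong\mathrm H_3(Y_p\times\overline V,\,Y_p\times\partial V)\cong\mathrm H_1(Y_p)$, generated by relative chains $\sigma\times\overline V$. The boundary $\partial\colon\mathrm H_3(Y,Y|_{U_T})\to\mathrm H_2(Y|_{U_T})$ sends $\sigma\times\overline V\mapsto\sigma\times\partial V$; but under the retract to $Y|_T$, the loop $\partial V$ becomes a closed walk in the tree $T$, whose associated 1-chain is zero (each edge is net-traversed zero times), so $\sigma\times\partial V$ is null-homologous in $Y|_T$. Hence $\partial=0$ and the sequence degenerates into
$$0\to\mathrm H_3(Y|_{U_T})\to\mathrm H_3(Y)\to\mathrm H_1(Y_p)\to 0.$$

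Given $[\gamma]\in\mathrm H_3(Y)$ with image $\sigma\in\mathrm H_1(Y_p)$, write $[\gamma]=\tilde\sigma+\alpha$ with $\alpha\in\mathrm H_3(Y|_{U_T})$ and $\tilde\sigma=\sigma\times\overline V-C$ for some 3-chain $C\subset Y|_{U_T}$ satisfying $\partial C=\sigma\times\partial V$. The integral $\int_{\sigma\times\overline V}\widetilde\omega$ vanishes: even in a merely smooth trivialization, $\widetilde\omega=\omega\I\wedge\omega\II\wedge\mathrm dz$ decomposes as $\theta(z)\wedge\mathrm dz$ with $\theta(z)$ a fiber 2-form (the $\mathrm dz$-factor is intrinsic to $Y\to\mathbf P^1$), so Fubini gives $\int_{\overline V}\mathrm dz\cdot\int_\sigma\theta(z)=0$ (a fiber 2-form integrates to zero over the 1-cycle $\sigma$). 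Deforming $C$ into $Y|_T$ via the retract and using the closedness of $\widetilde\omega$ (Stokes' theorem) gives $\int_C\widetilde\omega=\int_{C'}\widetilde\omega$ for a 3-cycle $C'\subset Y|_T$. Hence $\mathcal I([\gamma])\in\mathcal I(\mathrm H_3(Y|_T))$. For cycles in $Y|_T$, the mapping-cylinder description (each arm $Y|_{\overline{ps}}$ retracts onto $Y_s$ by Proposition~\ref{propdefret}, with ``incoming fiber'' $Y_p$) combined with the LES of the pair $(Y|_T,A)$ for $A=Y_p\sqcup\bigsqcup_sY_s$ identifies $\mathrm H_3(Y|_T,A)$ with $\bigoplus_s\mathrm H_2(Y_p)$ via tube classes $[\ell\times ps]$, whose boundary is $(\ell,-\mathrm{sp}_s\ell)$ with $\ker\mathrm{sp}_s=VC_s$ by Definition~\ref{defnvancyc}. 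Decomposing a 3-cycle in $Y|_T$ as a sum of tubes modulo a cycle in $A$ (which contributes nothing by Remark~\ref{remint3formvanish}), $\delta\circ q=0$ forces $\ell_s\in VC_s$ and $\sum_s\ell_s=0$, yielding $\mathcal I([\gamma])=\mathcal I_0((\ell_s)_s)$.

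The main obstacle is the patching of the deformation retract $Y|_{U_T}\twoheadrightarrow Y|_T$ from the Clemens retracts of Proposition~\ref{propdefret} and the Ehresmann trivializations along the overlap annuli $\partial D_s$. A secondary technical point is the ``closed-walk-in-a-tree'' reasoning used both to show $\partial=0$ and to deform $C$ into $Y|_T$; both rely on the contractibility of $T$ and ultimately reduce to standard facts about the homology of trees. The Fubini-type vanishing of the outer-disk integral is robust to the choice of smooth trivialization, since the decomposition $\widetilde\omega=\theta(z)\wedge\mathrm dz$ is preserved by any trivialization compatible with the projection.
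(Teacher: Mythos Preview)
Your overall architecture---split $\mathbf P^1$ into a tubular neighborhood $U_T$ of the star $T$ and a complementary disk $V$, then use the long exact sequence of the pair $(Y,Y|_{U_T})$---is essentially the same decomposition the paper uses (Mayer--Vietoris for $Y|_{\mathcal U}\cup Y|_{\mathcal V}$). The analysis of $\mathrm H_3(Y|_T)$ via tube classes also parallels the paper's iterated Mayer--Vietoris over the arms. However, there is a genuine gap in your argument for $\partial=0$.

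Your justification that $[\sigma\times\partial V]=0$ in $\mathrm H_2(Y|_{U_T})$ is that, after retracting to $T$, the base loop $\partial V$ becomes a closed walk traversing each edge once in each direction, so the associated $1$-chain vanishes. But this ignores fiber monodromy. When $\partial V$ is pushed onto the arm $\overline{ps_i}$, the two passes (left side and right side of the arm) carry fiber classes that differ by the local monodromy $\Theta_{s_i}$ acting on $\mathrm H_1(Y_p)$: going from the left segment to the right segment along $\partial V$ forces you either around $s_i$ or around all the other $s_j$, and neither has trivial monodromy in general. So the two contributions over $\overline{ps_i}$ do \emph{not} cancel, and the retracted $2$-cycle is not visibly null-homologous. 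The same issue contaminates the later step where you ``deform $C$ into $Y|_T$'' to obtain a $3$-\emph{cycle} $C'$: since $r_*(\sigma\times\partial V)$ need not vanish as a chain, $r(C)$ need not be closed.

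The paper does \emph{not} claim the analogue of $\partial=0$. In its notation, after obtaining the exact sequence
\[
\mathrm H_3(Y_q)\oplus\mathrm H_3(Y|_{\mathcal U})\longrightarrow\mathrm H_3(Y)\xrightarrow{\ \varphi\ }\mathrm H_1(Y_q),
\]
it makes no attempt to show $\varphi=0$. Instead it exploits the specific fiber-product structure: by K\"unneth, $\mathrm H_1(Y_q)=\mathrm H_1(E\I_q)\oplus\mathrm H_1(E\II_q)$, so the projection $\pi_q$ of Remark~\ref{remseclift} is the identity on $\mathrm H_1$. Hence for any $g\in\mathrm H_3(Y)$ one has $\varphi(g)=\varphi(\pi(g))$, i.e.\ $g-\pi(g)\in\ker\varphi$, while $\mathcal I(\pi(g))=0$ because $\pi(g)$ is supported on the section surfaces $E\I\times_{\mathbf P^1}\mathbf 0\II$ and $\mathbf 0\I\times_{\mathbf P^1}E\II$, on which $\widetilde\omega$ restricts to zero (Remark~\ref{remint3formvanish}). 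This is exactly the structural input your argument is missing; without it (or an independent proof that $\partial=0$, which you have not given), the reduction from $\mathrm H_3(Y)$ to $\mathrm H_3(Y|_{U_T})$ does not go through.
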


\begin{proof}
We start by first studying $\mathrm{H}_3(Y,Y_p)$. Since $Y_p$ is a deformation retract of its neighborhood (by Proposition \ref{propdefret}), this group is canonically isomorphic to $\mathrm{H}_3(Y/Y_p)$, where we write $Y/Y_p$ for the topological quotient space in which all points of $Y_p$ are mutually identified.

Pick a point $q\in\mathbf P^1$ such that there exists a simply connected open neighborhood $\mathcal V$~of~$q$ disjoint from all $ps$. For each $s\in\Sigma$ we also fix a simply connected open neighborhood $\mathcal U_s$ of~the closed path $\overline{ps}$ such that the following holds (see illustration below, Figure \ref{diag3}(b)):
\begin{itemize}
    \item the sets $\mathcal V$ and (all) $\mathcal U_s$ cover $\mathbf P^1$, i.e. $\mathbf P^1 = \mathcal V\cup\bigcup_{s\in\Sigma}\mathcal U_s$
    \item for any $\Sigma'\subseteq\Sigma$ and $s\in\Sigma\setminus\Sigma'$, the open neighborhood $\mathcal U_s\cap\bigcup_{t\in\Sigma'}\mathcal U_t$ is contractible to $p$
\end{itemize}

\begin{figure}[H]
  \centering
  \captionsetup{width=\linewidth}
  \includegraphics[width=\linewidth]{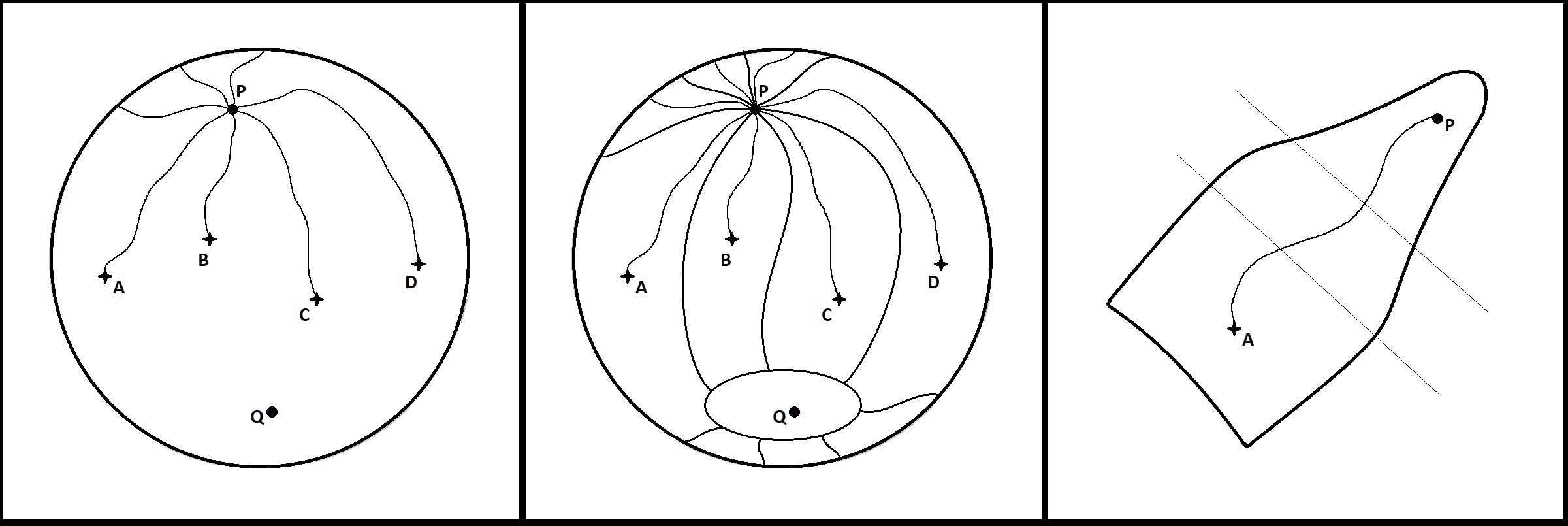}
  \caption{(images a,b,c; from left to right)
  \textit{
  Up to homeomorphism, the points in $\Sigma\sqcup\{p,q\}$ and paths $ps$ look as in image (a). The second, image (b), shows a possible cover of $\mathbf P^1$ by $\mathcal V$ and $\mathcal U_s$ as described above. In particular, it is clear that such a cover exists (homeomorphically to this image). Finally, image (c) shows a cover of the set $\mathcal U_a$, as below.
  }}\label{diag3}
\end{figure}

Cover $\mathcal U_s$ by a simply connected neighborhood of $s$ and a simply connected neighborhood of $p$, with simply connected intersection (over which every fiber is generic and identified up to homotopy with $Y_p$), as in Figure \ref{diag3}(c). Proposition \ref{propdefret} and the Mayer-Vietoris theorem, applied to the respective cover of the space $(Y|_{\mathcal U_s})/Y_p$, then imply that the following sequence is exact:
$$\mathrm{H}_3(Y_p)\rightarrow\mathrm{H}_3(Y_s)\oplus\mathrm{H}_3(Y_p/Y_p)\rightarrow\mathrm{H}_3(Y|_{\mathcal U_s}/Y_p)\rightarrow\mathrm{H}_2(Y_p)\rightarrow\mathrm{H}_2(Y_s)\oplus\mathrm{H}_2(Y_p/Y_p)$$
Observe that, if $(\ell_s\times ps)\in\mathrm{Z}_3(Y|_{\mathcal U_s}/Y_p)$ is any $3$-cycle constructed above $ps$ by extending some $2$-cycle representing $\ell_s\in VC_s$, then the above map 
$$\mathrm{H}_3(Y|_{\mathcal U_s}/Y_p)\longrightarrow\ker\!\big(\mathrm{H}_2(Y_p)\rightarrow\mathrm{H}_2(Y_s)\big) = VC_s$$
sends $[\ell_s\times ps]$ back to $\ell_s$. This is the key element of the compatibility of $\mathcal I$ and $\mathcal I_0$.
\bigskip

If $\Sigma = \{s_1,\ldots,s_n\}$ and $\mathcal W_m\coloneqq\mathcal U_{s_m}\cap\bigcup_{i<m}\mathcal U_{s_i}$, then $(Y|_{\mathcal W_m})/Y_p$ is contractible by definition~of the sets $\mathcal U_s$. Iterating the Mayer-Vietoris theorem, we get that $\mathrm{H}_3(Y|_{\mathcal U}/Y_p)\cong\bigoplus\nolimits_s\mathrm{H}_3(Y|_{\mathcal U_s}/Y_p)$ for $\mathcal U = \bigcup_{s\in\Sigma}\mathcal U_s$. Now the top row of the following diagram is exact by the previous paragraph:
\vspace{-15pt}

\begin{center}\begin{tikzcd}
    & \bigoplus\nolimits_s\mathrm{H}_3(Y_s)\arrow[r]\arrow[d] & \bigoplus\nolimits_s\mathrm{H}_3(Y|_{\mathcal U_s}/Y_p)\arrow[r]\arrow[d, "\rotatebox{90}{$\sim$}", pos=0.4] & \bigoplus\nolimits_s VC_s\arrow[r]\arrow[d] & 0\\
    \mathrm{H}_3(Y_p)\arrow[r] & \mathrm{H}_3(Y|_{\mathcal U})\arrow[r] & \mathrm{H}_3(Y|_{\mathcal U}/Y_p)\arrow[r] & \mathrm{H}_2(Y_p)
\end{tikzcd}\end{center}
\vspace{-3pt}

\noindent
The bottom row comes from the long exact singular homology sequence of the pair $(Y|_{\mathcal U}, Y_p)$~and the vertical maps are given by summation. We deduce that the following sequence~is~exact
$$\mathrm{H}_3(Y_p)\oplus\bigoplus\nolimits_s\mathrm{H}_3(Y_s)\longrightarrow\mathrm{H}_3(Y|_{\mathcal U})\longrightarrow\mathrm{ker}\!\left(\bigoplus\nolimits_{s}VC_s\rightarrow\mathrm{H}_2(Y_p)\right)\longrightarrow 0$$
and that, by construction, the second map commutes with the definitions of $\mathcal I$ and $\mathcal I_0$. Indeed, its definition agrees with our comparison of $\mathcal I$ and $\mathcal I_0$ at the beginning of this section.
\bigskip

It remains only to show that $\mathcal I(\mathrm{H}_3(Y)) = \mathcal I(\mathrm{H}_3(Y|_{\mathcal U}))$. For this, note that $\mathcal U$ and $\mathcal V$ cover $\mathbf P^1$. The intersection is an annulus $\mathcal U\cap\mathcal V$ with $Y|_{\mathcal U\,\cap\,\mathcal V}$ homotopy equivalent to $Y_q\times\mathbf S^1$.~Thus
$$\mathrm{H}_2(Y_q\times\mathbf S^1)\cong(\mathrm{H}_2(Y_q)\otimes\mathrm{H}_0(\mathbf S^1))\oplus(\mathrm{H}_1(Y_q)\otimes\mathrm{H}_1(\mathbf S^1))\cong\mathrm{H}_2(Y_q)\oplus\mathrm{H}_1(Y_q)$$
by the K\"unneth formula (these homology groups are torsion-free since $Y_q$ is a product of tori). Applying the Mayer-Vietoris theorem one more time, to $Y|_{\mathcal U}\cup Y|_{\mathcal V}$, we get the exact sequence:
$$\mathrm{H}_3(Y_q)\oplus\mathrm{H}_3(Y|_{\mathcal U})\rightarrow\mathrm{H}_3(Y)\rightarrow
\mathrm{H}_2(Y_q\times\mathbf S^1)\rightarrow
\mathrm{H}_2(Y_q)\oplus\mathrm{H}_2(Y|_{\mathcal U})$$
The component $\mathrm{H}_2(Y_q\times\mathbf S^1)\rightarrow\mathrm{H}_2(Y|_{\mathcal V})\cong
\mathrm{H}_2(Y_q)$ of the rightmost map is given by inclusions, hence its kernel is canonically identified with $\mathrm{H}_1(Y_q)$. We get a map $\varphi$ and an exact sequence:
$$\mathrm{H}_3(Y_q)\oplus\mathrm{H}_3(Y|_{\mathcal U})\longrightarrow\mathrm{H}_3(Y)\xrightarrow{\;\;\varphi\;\;}
\mathrm{H}_1(Y_q)$$
Now, the K\"unneth formula tells us that $\mathrm{H}_1(Y_q)\xrightarrow{\mathrm{pr}\I_*,\,\mathrm{pr}\II_*}\mathrm{H}_1(E\I_q)\oplus\mathrm{H}_1(E\II_q)$ is an isomorphism. This implies that $\pi_q : \mathrm{H}_1(Y_q)\rightarrow\mathrm{H}_1(Y_q)$ from Remark \ref{remseclift} is the identity map. Let $g\in\mathrm{H}_3(Y)$ be arbitrary. As the construction of $\varphi$ commutes with the projections $\mathrm{pr}\J : Y\rightarrow E\J$, we have
$$\varphi(g) = \pi_q(\varphi(g)) = \varphi(\pi(g))$$
where $\pi : \mathrm{H}_3(Y)\rightarrow\mathrm{H}_3(Y)$ is the composition $(\iota\I_*+\iota\II_*)\circ(\mathrm{pr}\I_*,\,\mathrm{pr}\II_*)$.
In particular, this implies that $g-\pi(g)\in\ker(\varphi)=\mathrm{im}(\mathrm{H}_3(Y_q)\oplus\mathrm{H}_3(Y|_{\mathcal U}))$ and thus $\mathcal I(g-\pi(g))\in\mathcal I(\mathrm{H}_3(Y|_{\mathcal U}))$.
However, $\mathcal I(\pi(g)) = 0$ because, again by Remark \ref{remint3formvanish}, the integral of $\widetilde{\omega}$ is $0$ over all $3$-chains contained in $\textbf 0\I{\times_{\mathbf P^1}}E\II$ or $E\I{\times_{\mathbf P^1}}\textbf 0\II$. Therefore $\mathcal I(g)\in\mathcal I(\mathrm{H}_3(Y|_{\mathcal U}))$.
\end{proof}
\section{Computing the Vanishing Cycles}

We keep the notation of the previous section, notably $VC'_s(Y)$ from Definition \ref{defnvancyc}. Let $s\in\Sigma$. By translation, we may assume that $s = 0$. Consider the following diagram with exact rows
\begin{center}\begin{tikzcd}
    0\arrow[r] & VC'_0(\widehat{X})\arrow[r]\arrow[d, hook] & \mathrm{H}'_2(\widehat{X}_p)\arrow[r]\arrow[d, equal] & \mathrm{H}_2(\widehat{X}_0)\arrow[d]\\
    0\arrow[r] & VC'_0(X)\arrow[r] & \mathrm{H}'_2(X_p)\arrow[r] & \mathrm{H}_2(X_0)
\end{tikzcd}\end{center}
where we have used on the right that $\mathrm{H}'_2(Y_0)\hspace{-1pt}\hookrightarrow\hspace{-1pt}\mathrm{H}_2(Y_0)$. We want to compute the~groups~$VC'_0(\widehat{X})$ and $VC'_0(X)$. This calculation can be done in a disk $\mathcal D$ around $0$ in $\mathbf P^1$ in which we fix $p\neq 0$.
Moreover, we will want to express these results in terms intrinsic to the monodromy operator~$\Theta_0$ on $\mathcal H_1(E\I|_{\mathbf P^1\setminus\Sigma},\mathbf Z)\otimes\mathcal H_1(E\II|_{\mathbf P^1\setminus\Sigma},\mathbf Z)$ (see Remark \ref{remsecondmain}), to be used in any chosen basis.

\begin{defn}
Let $G$ be an Abelian group and $H\subseteq G$ its subgroup. Then we define
$$\,\overline{\! H}\coloneqq\{g\in G\mid (\exists k\neq 0)\,kg\in H\}$$
and call $\,\overline{\! H}$ the \textit{saturation} of $H$, where the ambient group $G$ is always clear in context.
\end{defn}

\begin{exmp}\label{exmptormonod}
Consider an elliptic surface $E\rightarrow\mathbf P^1$ with a type-$I_n$ semistable singular fiber above $0$. The fiber is topologically a torus pinched at $n$ places (as in Figure \ref{figptor} below) and the monodromy matrix acting on $\mathrm{H}_1(E_p)$ is of the following form in some $\mathbf Z$-basis $(u,v)$ of $\mathrm{H}_1(E_p)$:
\begin{center}
$\Theta = \Theta_{E,0}\coloneqq\begin{bmatrix}
    1 & n\\
    0 & 1
\end{bmatrix}$\;\;\;\;\;\;\;\;\;\;
\end{center}
Let $VC_0(E)\coloneqq\ker\big(\mathrm{H}_1(E_p)\rightarrow\mathrm{H}_1(E_0)\big)$.
For any class $w\in\mathrm{H}_1(E_p)$, the image of $\Theta w-w$ in  the set $\mathrm{H}_1(E|_\mathcal D)$ is $0$ since it's represented by the boundary of the $2$-chain that some representative $1$-cycle of $w$ describes as it encircles $0$ once. By Proposition $\ref{propdefret}$, this gives $\Theta w-w\in VC_0(E)$. Thus $n\mathbf Z\cdot u = \mathrm{im}(\Theta-I)\subseteq VC_0(E)$. Moreover, because the group $\mathrm{H}_1(E_0)\simeq\mathbf Z$ is torsion-free, $\mathbf Z\cdot u = \overline{\mathrm{im}(\Theta-I)}\subseteq VC_0(E)$. On the other hand, the subgroup $\mathbf Z\cdot v$ gets mapped onto $\mathrm{H}_1(E_0)$, hence $VC_0(E) = \mathbf Z\cdot u$. A cycle in $\mathrm{H}_1(E_p)$ is vanishing at $0$ if and only if it's a multiple of $u$.

\begin{figure}[H]
  \centering
  \captionsetup{width=\linewidth}
  \includegraphics[height=60 pt]{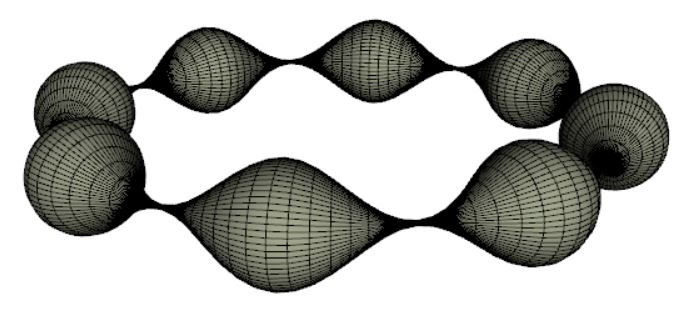}
  \caption{\textit{An $8$-pinched torus.}}\label{figptor}
\end{figure}
\end{exmp}

Now, consider $X = E\I \!\times_{\mathbf P^1}\! E\II$ and suppose that the elliptic surfaces $E\I,E\II$ have singular fibers of type $I_n,I_m$ at $0$, respectively. Let $(u_1,v_1)$ and $(u_2,v_2)$ be as above in Example \ref{exmptormonod}. Then $$(uu,uv,vu,vv)\coloneqq (u_1\otimes u_2,u_1\otimes v_2,v_1\otimes u_2,v_1\otimes v_2)$$ is a basis of $\mathrm{H}'_2(X_p)$. As a consequence of the previous example, the group $\mathbf Z\cdot vv$ gets mapped onto $\mathrm{H}'_2(X_0)\simeq\mathbf Z$, hence $uu,uv,vu\in VC'_0(X)$.

The case when $nm = 0$ also appears in our situation, where type $I_0$ denotes a regular fiber. If $n = 0$, then $E\I$ has no singularity at $0$ and no vanishing cycles. It follows that $uu, vu$ are vanishing, but $uv, vv$ map onto a basis of $\mathrm{H}'_2(X_0)\simeq\mathbf Z^2$. Similarly when $m = 0$. This discussion can be summed up in the following result:\newpage


\begin{prop}\label{propsingvanish}
Let the elliptic surfaces $E\I,E\II$ have singular fibers of type $I_n,I_m$ at $0$, where $n,m\geq 0$. Let $\Theta = \Theta_{X,0} = \Theta_{E\I,0}\otimes \Theta_{E\II,0}$ be the monodromy operator on $\mathrm{H}'_2(X_p)$. Then:
$$VC'_0(X) \;=\; \mathrm{ker}(\Theta-I)^{1+\mathrm{rank}((\Theta-I)^2)} \;=\; \left\{
\begin{aligned}
    &\mathrm{ker}(\Theta-I)^2 & &\textrm{if }nm\neq 0\\
    &\mathrm{ker}(\Theta-I)& &\textrm{if }nm = 0
\end{aligned}\right.$$
\end{prop}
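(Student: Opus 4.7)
The plan is to compute $VC'_0(X)$ directly in a Künneth-adapted basis, and then match the result with the right-hand side of the claimed formula.

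First, I would fix bases $(u_j, v_j)$ of $\mathrm{H}_1(E\J_p)$ as in Example \ref{exmptormonod}, so that $\Theta_{E\J, 0}$ has the unipotent matrix $\left(\begin{smallmatrix} 1 & n_j \\ 0 & 1 \end{smallmatrix}\right)$ with $(n_1, n_2) = (n, m)$. Writing $\Theta - I = N_1 \otimes I + I \otimes N_2 + N_1 \otimes N_2$ (where $N_j \coloneqq \Theta_{E\J, 0} - I$), a short computation in the tensor basis $(uu, uv, vu, vv)$ shows that the only potentially nonzero column of $(\Theta - I)^2$ is $vv \mapsto 2nm \cdot uu$. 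Hence $\mathrm{rank}((\Theta - I)^2) = 1$ when $nm \neq 0$ and vanishes otherwise, and from the explicit matrix of $\Theta - I$ one reads off
\[
\ker(\Theta - I)^2 = \mathrm{span}(uu, uv, vu) \text{ when } nm \neq 0, \quad \ker(\Theta - I) = \mathrm{span}(uu, vu) \text{ when } n = 0 \neq m,
\]
and symmetrically when $n \neq 0 = m$. This identifies the right-hand side of the proposition.

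Second, I would identify $VC'_0(X) \subseteq \mathrm{H}'_2(X_p)$ independently, via the Künneth formula. Proposition \ref{propdefret}, applied separately to each surface $E\J$ over a disk $\mathcal{D}$ around $0$, and then to the fiber product, yields a deformation retract of $X|_{\mathcal{D}}$ onto $X_0$ compatible with the product structure. By naturality of Künneth (and because all $\mathrm{H}_*(E\J_z)$ are torsion-free for $z \in \{0, p\}$), the map $\mathrm{H}'_2(X_p) \to \mathrm{H}_2(X_0)$, restricted to the $\mathrm{H}_1 \otimes \mathrm{H}_1$ summand of the Künneth decomposition of $\mathrm{H}_2(X_0)$, is exactly the tensor product of the factor-wise maps $\mathrm{H}_1(E\J_p) \to \mathrm{H}_1(E\J_0)$ identified in Example \ref{exmptormonod} — namely, killing $u_j$ when $n_j > 0$ and acting as an isomorphism when $n_j = 0$. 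Reading off the kernel case by case then reproduces the same answer as in the monodromy computation.

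The one point that needs care is that $VC'_0(X)$ is the kernel of the full map $\mathrm{H}'_2(X_p) \to \mathrm{H}_2(X_0)$, not just of its composition with the projection onto the $\mathrm{H}_1 \otimes \mathrm{H}_1$ Künneth summand. However, since $\mathrm{H}_*(E\J_0)$ is torsion-free, the Künneth formula for $X_0 = E\I_0 \times E\II_0$ exhibits this summand as a genuine direct summand of $\mathrm{H}_2(X_0)$, so the two kernels agree. The remainder of the argument is bookkeeping over the three sub-cases $nm \neq 0$, $n = 0 \neq m$, $n \neq 0 = m$, all of which are routine once the above two identifications are in hand.
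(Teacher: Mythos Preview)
Your proposal is correct and is essentially the same argument as the paper's. The paper computes $VC'_0(X)$ via the K\"unneth formula in the discussion immediately preceding the proposition (obtaining $\mathrm{span}(uu,uv,vu)$ when $nm\neq 0$ and $\mathrm{span}(uu,vu)$ when $n=0\neq m$), and then the proof of the proposition itself consists solely of writing out the matrices of $\Theta$, $\Theta-I$, $(\Theta-I)^2$ in the basis $(uu,uv,vu,vv)$ to read off the kernels; you have merely merged these two steps and been slightly more explicit about the naturality of the K\"unneth splitting for $X_0$.
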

\begin{proof}
The equalities are easily seen in the basis $(uu,uv,vu,vv)$, in which we have:
$$\Theta = \begin{bmatrix}
    1 & m & n & nm\\
    0 & 1 & 0 & n\\
    0 & 0 & 1 & m\\
    0 & 0 & 0 & 1
\end{bmatrix},\;\;\;\;
\Theta-I = \begin{bmatrix}
    0 & m & n & nm\\
    0 & 0 & 0 & n\\
    0 & 0 & 0 & m\\
    0 & 0 & 0 & 0
\end{bmatrix},\;\;\;\;
(\Theta-I)^2 = \begin{bmatrix}
    0 & 0 & 0 & 2nm\\
    0 & 0 & 0 & 0\\
    0 & 0 & 0 & 0\\
    0 & 0 & 0 & 0
\end{bmatrix}
\;\;\;\;\;\;\;\;
$$
\vspace{-25pt}

\end{proof}
\medskip

This settles the case of $VC'_0(X)$. It remains to compute $VC'_0(\widehat{X})\subseteq VC'_0(X)$. If $nm = 0$,~then these two sets agree and the result has already been computed above. Thus, assume $nm\neq 0$ and write $d\coloneqq\gcd(m,n) = xm+yn$ for some choice of $x,y\in\mathbf Z$. The monodromy matrix~$\Theta$~is
$$
\begin{bmatrix}
    1 & m & n & nm\\
    0 & 1 & 0 & n\\
    0 & 0 & 1 & m\\
    0 & 0 & 0 & 1
\end{bmatrix}\!\!\textrm{, which becomes }\!
\begin{bmatrix}
    1 & 2nm/d & yn-xm & nm\\
    0 & 1 & 0 & d\\
    0 & 0 & 1 & 0\\
    0 & 0 & 0 & 1
\end{bmatrix}\!\textrm{ in basis }\!
\begin{bmatrix}
    uu\\
    n/d\cdot uv + m/d\cdot vu\\
    y\cdot vu - x\cdot uv\\
    vv
\end{bmatrix}
$$
of the group $\mathrm{H}'_2(X_p)$. The three numbers $2nm/d, yn-xm, nm$ are all divisible by $d$, therefore:
$$\mathrm{im}(\Theta-I)\subseteq \big(d\mathbf Z\cdot uu\big) \oplus \big(d\mathbf Z\cdot (n/d\cdot uv + m/d\cdot vu)\big)$$
In fact, this inclusion is an equality. It suffices to observe $d\cdot uu\in\mathrm{im}(\Theta-I)$, which~holds~by:
$$y\frac{n}{d}-x\frac{m}{d}\equiv y\frac{n}{d}+x\frac{m}{d}\equiv 1\!\!\!\!\mod\!2\textrm{,\; so }\gcd\!\left(\frac{2nm}{d},yn-xm\right) = d\cdot\gcd\!\left(2\frac{m}{d}\frac{n}{d},y\frac{n}{d}-x\frac{m}{d}\right) = d$$
We thus deduce the following equality (and inclusion, via the same argument as in Example~\ref{exmptormonod}), 
$$\big(d\mathbf Z\cdot uu\big) \oplus \big(d\mathbf Z\cdot (n/d\cdot uv + m/d\cdot vu)\big) = \mathrm{im}(\Theta-I) \subseteq VC'_0(\widehat{X})$$
however we cannot conclude that $\overline{\mathrm{im}(\Theta-I)} \subseteq VC'_0(\widehat{X})$ because the group $\mathrm{H}_2(\widehat{X}_0)$ is in general not torsion-free (contrary to the previously considered cases). Indeed, by the following theorem,
$$\mathrm{im}(\Theta-I)\subseteq VC'_0(\widehat{X})\subseteq \overline{\mathrm{im}(\Theta-I)}$$
and neither inclusion is an equality if $d\neq 1$.
\medskip

\begin{thm}\label{thmvanish}
The subgroup $VC'_0(\widehat{X})\subseteq\mathrm{H}'_2(\widehat{X}_0)$ of cycles vanishing at $0$ is exactly:
$$VC'_0(\widehat{X}) = \big(\mathbf Z\cdot uu\big)+\mathrm{im}(\Theta-I) = \big(\mathbf Z\cdot uu\big) \oplus \big(d\mathbf Z\cdot (n/d\cdot uv + m/d\cdot vu)\big)$$
\end{thm}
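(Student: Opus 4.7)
By the discussion preceding the statement we already have the inclusions
\[
\mathrm{im}(\Theta-I)\;\subseteq\;VC'_0(\widehat{X})\;\subseteq\;\overline{\mathrm{im}(\Theta-I)} \;=\; \mathbf{Z}\cdot uu \;+\; \mathbf{Z}\cdot\bigl(\tfrac{n}{d}uv + \tfrac{m}{d}vu\bigr),
\]
and the quotient $\overline{\mathrm{im}(\Theta-I)}/\mathrm{im}(\Theta-I)\cong(\mathbf{Z}/d)^2$ is generated by the classes of $uu$ and of $(n/d)uv+(m/d)vu$. The desired equality thus reduces to two claims: (i) $uu\in VC'_0(\widehat{X})$, and (ii) when $d>1$, the element $(n/d)uv+(m/d)vu$ is not itself a vanishing cycle.

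The plan for (i) is to construct an explicit bounding $3$-chain in $\widehat{X}|_{\mathcal{D}}$. Pick a path $\gamma\subset\mathcal{D}$ from $p$ to $0$, and let $D\J\subset E\J|_\gamma$ be the $2$-chain $\{|x\J|=\sqrt{|z|}\}_{z\in\gamma}$, parameterized by an angle $\alpha_j\in S^1$ and $z\in\gamma$, which cones the vanishing cycle $u\J(p)$ down to the node $p\J\in E\J_0$. Then $C:=D\I\times_\gamma D\II$ is a $3$-chain in $X|_\gamma$ whose boundary at $z=p$ is the torus $u_1(p)\times u_2(p)$ representing $uu$, and whose $z=0$ end collapses to the single node $(p\I,p\II)\in X$. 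Lifting $C$ continuously to $\widehat{X}$ sends the end at $z=0$ to the exceptional $\mathbf{P}^1$ via $(\alpha_1,\alpha_2)\mapsto(e^{i\alpha_1}:e^{i\alpha_2})=(1:e^{i(\alpha_2-\alpha_1)})$. This map $T^2\to\mathbf{P}^1$ factors as $T^2\twoheadrightarrow S^1\hookrightarrow\mathbf{P}^1$, $(\alpha_1,\alpha_2)\mapsto\alpha_2-\alpha_1$, and hence pushes the fundamental class of $T^2$ to zero in $\mathrm{H}_2(\mathbf{P}^1)=\mathbf{Z}$. The lifted $3$-chain therefore has only $uu(p)$ as a homologically nontrivial boundary, proving $uu\in VC'_0(\widehat{X})$.

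For (ii), the same approach --- with the non-vanishing cycle $v_2$ in place of $u_2$ --- produces a $3$-chain $D\I\times_\gamma v_2$ realizing a homology in $\widehat{X}|_{\mathcal{D}}$ between $uv(p)$ and a combination of exceptional line classes. Near each of the $m$ nodes that $v_2(0)$ traverses in $E\II_0$, the local model is now $S^1\times\mathbf{R}\to\mathbf{C}^*\subset\mathbf{P}^1$, $(\alpha,s)\mapsto(1:e^{s-i\alpha})$: an orientation-preserving diffeomorphism of degree $+1$ in $\mathrm{H}_2(\mathbf{P}^1)=\mathbf{Z}$, so each traversed node contributes one copy of its exceptional line. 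Under the map $\mathrm{H}'_2(X_p)\to\mathrm{H}_2(\widehat{X}_0)$ we obtain $[uv]=\sum_{k=1}^m[L_{1,k}]$ and, by symmetry, $[vu]=\sum_{i=1}^n[L_{i,1}]$. To conclude that $(n/d)uv+(m/d)vu\notin VC'_0(\widehat{X})$, one needs its image $(n/d)\sum_k[L_{1,k}]+(m/d)\sum_i[L_{i,1}]$ to be nonzero in $\mathrm{H}_2(\widehat{X}_0)$.

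This last non-triviality is the main obstacle. The relation $n\cdot(\text{row})+m\cdot(\text{col})=0$ already holds in $\mathrm{H}_2(\widehat{X}_0)$ (as implied by $\mathrm{im}(\Theta-I)\subseteq VC'_0(\widehat{X})$), so $(n/d)\cdot(\text{row})+(m/d)\cdot(\text{col})$ is automatically $d$-torsion; the question is whether it is genuinely nonzero. My approach would be to describe $\widehat{X}_0$ explicitly as a union of the $nm$ proper transforms $\widehat{C_i^{(1)}\times C_k^{(2)}}$ --- each a blow-up of $\mathbf{P}^1\times\mathbf{P}^1$ at the two opposite corners dictated by the small resolution --- glued along their edge $\mathbf{P}^1$'s, and then extract the relations among the $[L_{i,k}]$ in $\mathrm{H}_2(\widehat{X}_0)$ via the resulting Mayer--Vietoris sequence. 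Verifying that the row/column equation is the only relation among these classes then produces the required $d$-torsion element and completes (ii).
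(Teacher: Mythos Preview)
Your strategy mirrors the paper's: show $uu$ vanishes via an explicit $3$-chain, identify the images of $uv,vu$ in $\mathrm{H}_2(\widehat{X}_0)$ with row and column sums of exceptional-line classes, and then argue that these satisfy essentially one relation. The paper carries out the last step by the Mayer--Vietoris sequence obtained from capping each exceptional $\mathbf{P}^1\subset\widehat{X}_0$ with a $3$-ball, so that the glued space is homotopy equivalent to $X_0$; the relations among the classes $[S_{i,j}]$ are then read off directly as the image of the boundary map $\delta:\mathrm{H}_3(X_0)\to\bigoplus_{i,j}\mathrm{H}_2(\mathbf{S}^2_{i,j})$, which is computed explicitly. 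This is cleaner than your proposed decomposition of $\widehat{X}_0$ into blown-up copies of $\mathbf{P}^1\times\mathbf{P}^1$, and it actually finishes the argument rather than leaving it as a plan.

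Two points to tighten. First, your reduction to (i) and (ii) is not quite sufficient: knowing that $(n/d)uv+(m/d)vu$ has \emph{nonzero} image does not by itself pin down $VC'_0(\widehat{X})$ when $d$ is composite---you need its image in $\mathrm{H}_2(\widehat{X}_0)$ to have \emph{exact order $d$}, else some proper multiple could vanish and enlarge $VC'_0(\widehat{X})$. Your proposed verification that the row/column relation is the unique one would indeed give exact order $d$, but you stop before doing it. Second, the ``by symmetry, degree $+1$'' claim for $[vu]$ is too quick: swapping the two elliptic factors acts on $\mathrm{H}_1\otimes\mathrm{H}_1$ with a Koszul sign, so the images of $uv$ and $vu$ in fact carry \emph{opposite} signs as sums of exceptional classes. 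The paper flags exactly this subtlety and resolves the relative sign a posteriori by invoking the already-known vanishing of $n\cdot uv+m\cdot vu\in\mathrm{im}(\Theta-I)$.
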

\begin{rem}\label{remcuriosity}
Recall that Schoen's construction of $\widehat{X}$ (described in Section 2) is dependent on a choice of small resolutions of $X$. The statement of Theorem \ref{thmvanish} is, however, independent of this choice. The essential reason for this is highlighted in the proof of Lemma \ref{lemcalc} below.
\end{rem}
\begin{prf}
The cycle $vv\in\mathrm{H}'_2(\widehat{X}_0)$ is not in $VC'_0(X)$, hence also not in $VC'_0(\widehat{X})$. We now compute the homology group $\mathrm{H}_2(\widehat{X}_0)$ and determine in it the images of $uu,uv,vu$.

First, note that the Betti numbers of an $n$-pinched torus, resp.\ $m$-pinched torus, are $(1,1,n)$, resp.\ $(1,1,m)$, and their homology is torsion-free. Therefore, the fiber $X_0$ has Betti numbers $(1,2,n+m+1,n+m,nm)$ and torsion-free homology by the K\"unneth formula.
\pagebreak

Second, $\widehat{X}$ is (locally above $0$) constructed by taking small resolutions at $nm$ nodes in $X_0$, which replace the $nm$ points by $2$-spheres $(\mathbf S^2_{i,j})_{n,m}$. Hence, gluing the resulting holes in $\widehat{X}_0$ with the $nm$ balls $(\mathbf B^3_{i,j})_{n,m}$ (such that the intersection of each $\mathbf B^3_{i,j}$ with $\widehat{X}_0$ is the sphere $\mathbf S^2_{i,j} = \partial \mathbf B^3_{i,j}$), we get a topological space homotopy equivalent to $X_0$ (by contracting each ball to a point). Consider the following part of the Mayer-Vietoris exact sequence:
$$\mathrm{H}_3\big(X_0\big)\xrightarrow{\;\;\delta\;\;} \mathrm{H}_2\left(\bigsqcup\nolimits_{i,j}\mathbf S^2_{i,j}\right)\xrightarrow{\;\;\alpha\;\;} \mathrm{H}_2\big(\widehat{X}_0\big)\longrightarrow \mathrm{H}_2\big(X_0\big)\longrightarrow 0$$
We know that $uu,uv,vu$ vanish in $\mathrm{H}_2(X_0)$. Thus their images in $\mathrm{H}_2(\widehat{X}_0)$ are in $\mathrm{im}(\alpha) = \mathrm{coker}(\delta)$. Let $S_{i,j}$ be a generator of $\mathrm{H}_2(\mathbf S^2_{i,j})\simeq\mathbf Z$ and $(A_i)^n_{i=1},(B_j)^m_{j=1}$ be the $n+m$ obvious generators of: 
$$\mathrm{H}_3(X_0)\cong(\mathrm{H}_2(E\I_0)\otimes\mathrm{H}_1(E\II_0))\oplus(\mathrm{H}_1(E\I_0)\otimes\mathrm{H}_2(E\II_0))$$
We claim the following (to be proven by explicit calculations in Lemma \ref{lemcalc} below):
\begin{itemize}
    \item The image of $uu$ in $\mathrm{H}_2(\widehat{X}_0)$ is $0$.
    \item $\delta(A_i) = \sum_{j}(S_{i,j}-S_{i+1,j})$ for $i\in\mathbf Z/n$ and $\delta(B_j) = \sum_{i}(S_{i,j}-S_{i,j+1})$ for $j\in\mathbf Z/m$.
    \item The image of $uv$ in $\mathrm{H}_2(\widehat{X}_0)$ is $-\sum_{j}\alpha(S_{i,j})$ (for any fixed $i\in\{1,2,\ldots,n\}$, the~choice~of which doesn't matter by the preceding item) and the image of $vu$ in $\mathrm{H}_2(\widehat{X}_0)$~is~$\sum_{i}\alpha(S_{i,j})$ (for any fixed $j\in\{1,2,\ldots,m\}$).
\end{itemize}
From this, it follows that $uv$ and $vu$ have images $\xi$ and $\eta$ in $\mathrm{im}(\alpha)\subseteq\mathrm{H}_2(\widehat{X}_0)$, which is of~the~form:
$$\mathrm{im}(\alpha)\simeq \frac{\mathbf Z^{nm}}{\mathrm{im}(\delta)} = \frac{\mathbf Z[\xi,\eta,S_{1,1},S_{1,2},\ldots,S_{n,m}]}{\sum_{i}\big\langle\xi+\sum_{j}S_{i,j}\big\rangle+\sum_{j}\big\langle\eta-\sum_{i}S_{i,j}\big\rangle}$$
Finally, it is now completely elementary (and only slightly tedious) to see that $n\xi + m\eta = 0$ is the only relation between these two elements in $\mathrm{im}(\alpha)$. Indeed,
$$n\xi + m\eta = m\sum\nolimits_{i}S_{i,j_0}-n\sum\nolimits_{j}S_{i_0,j} = \sum\nolimits_{j}\sum\nolimits_{i}S_{i,j} - \sum\nolimits_{i}\sum\nolimits_{j}S_{i,j} = 0$$
and any other relation can be reduced to this one by a recursive argument. This ends the proof.\,

To be precise, we prove the above formulas only up to sign in Lemma \ref{lemcalc}. But, then the sign of the generators $A_i,B_j,S_{i,j}$ can just be chosen to make all the formulas true, the~only~exception a priori being the difference in sign between $-\sum_{j}\alpha(S_{i,j})$ and $\sum_{i}\alpha(S_{i,j})$. This difference of signs is somehow encoded in the K\"unneth formula for $\mathrm{H}_2(\widehat{X}_p)$ and $\mathrm{H}_2(X_0)$. However, we see that the two signs must be different, since we already know that $n\cdot uv + m\cdot vu\in\mathrm{im}(\Theta-I)$ is a vanishing cycle and this also follows when the two signs differ. On the other hand,~we~can~conclude~that $n\cdot uv - m\cdot vu$ can't also be vanishing at the same time, because then $2n\sum_{j}\alpha(S_{i,j}) = \pm 2n\cdot uv$ would vanish, but this element is not in the image of $\delta$, which we've computed.
\end{prf}

The following corollary applies to our case, in which $nm = 0$ only if $\{n,m\} = \{0,1\}$. In particular, note that we do not need to know the value of $d$ (although it can always be recovered by additional effort) to compute $VC'_0(\widehat{X})$.

\begin{cor}\label{cor2}
Let the elliptic surfaces $E\I,E\II$ have singular fibers of type $I_n,I_m$ at $0$, where $n,m\geq 0$. Let $\Theta = \Theta_{X,0} = \Theta_{E\I,0}\otimes \Theta_{E\II,0}$ be the monodromy operator on $\mathrm{H}'_2(X_p)$.

If $nm\neq 0$ or $\{n,m\} = \{0,1\}$ (as in Schoen's construction), we get the following equality:
$$VC'_0(\widehat{X}) = \mathrm{im}(\Theta-I)+\overline{\mathrm{im}(\Theta-I)^2}$$
\end{cor}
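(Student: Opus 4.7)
The plan is to split into the two cases allowed by the hypothesis and reduce each one to a direct matrix computation in the basis $(uu, uv, vu, vv)$ already used in the proofs of Proposition \ref{propsingvanish} and Theorem \ref{thmvanish}.

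First I would handle the main case $nm \neq 0$. From the explicit form of $\Theta - I$ given in the proof of Proposition \ref{propsingvanish}, one reads off that
\[
(\Theta-I)^2 = \begin{bmatrix} 0 & 0 & 0 & 2nm \\ 0 & 0 & 0 & 0 \\ 0 & 0 & 0 & 0 \\ 0 & 0 & 0 & 0 \end{bmatrix},
\]
so $\mathrm{im}(\Theta-I)^2 = 2nm\,\mathbf{Z}\cdot uu$. Since the ambient group $\mathrm{H}'_2(X_p) \simeq \mathbf{Z}^4$ is torsion-free and $uu$ is part of a basis, the saturation is $\overline{\mathrm{im}(\Theta-I)^2} = \mathbf{Z}\cdot uu$. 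Combining with Theorem \ref{thmvanish}, which states $VC'_0(\widehat{X}) = \mathbf{Z}\cdot uu + \mathrm{im}(\Theta-I)$, gives
\[
VC'_0(\widehat{X}) = \mathrm{im}(\Theta-I) + \overline{\mathrm{im}(\Theta-I)^2},
\]
as required.

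Next I would handle the degenerate case $\{n,m\} = \{0,1\}$. Here the fiber $X_0$ has no nodes (at most one of $E\I_0, E\II_0$ is singular, but the other is smooth, so their fiber product has no product-of-nodes), so the small resolution is a local isomorphism and $VC'_0(\widehat{X}) = VC'_0(X)$. By Proposition \ref{propsingvanish}, $VC'_0(X) = \ker(\Theta - I)$ in this case. A direct inspection of the matrix $\Theta-I$ (say with $n=0$, $m=1$, giving the single nonzero column encoding $vv \mapsto uv$ and $vu \mapsto 0$, hence $\Theta-I$ acts as $(a,b,c,d)\mapsto(b,0,d,0)$) shows both that $(\Theta-I)^2 = 0$ — so the saturation term vanishes — and that $\mathrm{im}(\Theta-I) = \mathbf{Z}\cdot uu \oplus \mathbf{Z}\cdot vu = \ker(\Theta-I)$; the case $n=1,\,m=0$ is symmetric. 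Therefore
\[
\mathrm{im}(\Theta-I) + \overline{\mathrm{im}(\Theta-I)^2} = \mathrm{im}(\Theta-I) = \ker(\Theta-I) = VC'_0(\widehat{X}).
\]

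There is essentially no obstacle here: once Theorem \ref{thmvanish} is in hand, the corollary is simply a repackaging of the description $VC'_0(\widehat{X}) = \mathbf{Z}\cdot uu + \mathrm{im}(\Theta-I)$ in intrinsic terms (independent of the choice of basis), by identifying the generator $uu$ as a generator of $\overline{\mathrm{im}(\Theta-I)^2}$. The only mild subtlety is ensuring the formula also covers the $\{n,m\}=\{0,1\}$ case, which is the reason for including $\mathrm{im}(\Theta-I)$ as a separate summand in the answer (rather than just writing $\overline{\mathrm{im}(\Theta-I)^2}$, which would collapse to zero there).
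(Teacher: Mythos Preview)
Your proof is correct and follows essentially the same two-case split as the paper: for $nm\neq 0$ you identify $\overline{\mathrm{im}(\Theta-I)^2}=\mathbf Z\cdot uu$ and invoke Theorem~\ref{thmvanish}, and for $\{n,m\}=\{0,1\}$ you verify $\mathrm{im}(\Theta-I)=\ker(\Theta-I)$ and $(\Theta-I)^2=0$, exactly as the paper does (phrased there as $\mathrm{im}(\Theta-I)=(m+n)\ker(\Theta-I)$ with $m+n=1$). One small slip: in the $n=0,\,m=1$ case there are \emph{two} nonzero columns of $\Theta-I$ (sending $uv\mapsto uu$ and $vv\mapsto vu$), not ``the single nonzero column encoding $vv\mapsto uv$''; your formula $(a,b,c,d)\mapsto(b,0,d,0)$ and the ensuing conclusion are nonetheless correct.
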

\begin{proof}
If $nm\neq 0$, then this is an immediate consequence of $\mathrm{im}(\Theta-I)^2 = 2nm\mathbf Z\cdot uu$ and the above theorem. Otherwise $VC'_0(\widehat{X}) = VC'_0(X)$, also with $\mathrm{im}(\Theta-I) = (m+n)\mathrm{ker}(\Theta-I)$ and $(\Theta-I)^2 = 0$. The proof now follows by $m+n = 1$ and Proposition \ref{propsingvanish}.
\end{proof}

\begin{figure}[H]
  \centering
  \captionsetup{width=\linewidth}
  \includegraphics[width=\linewidth]{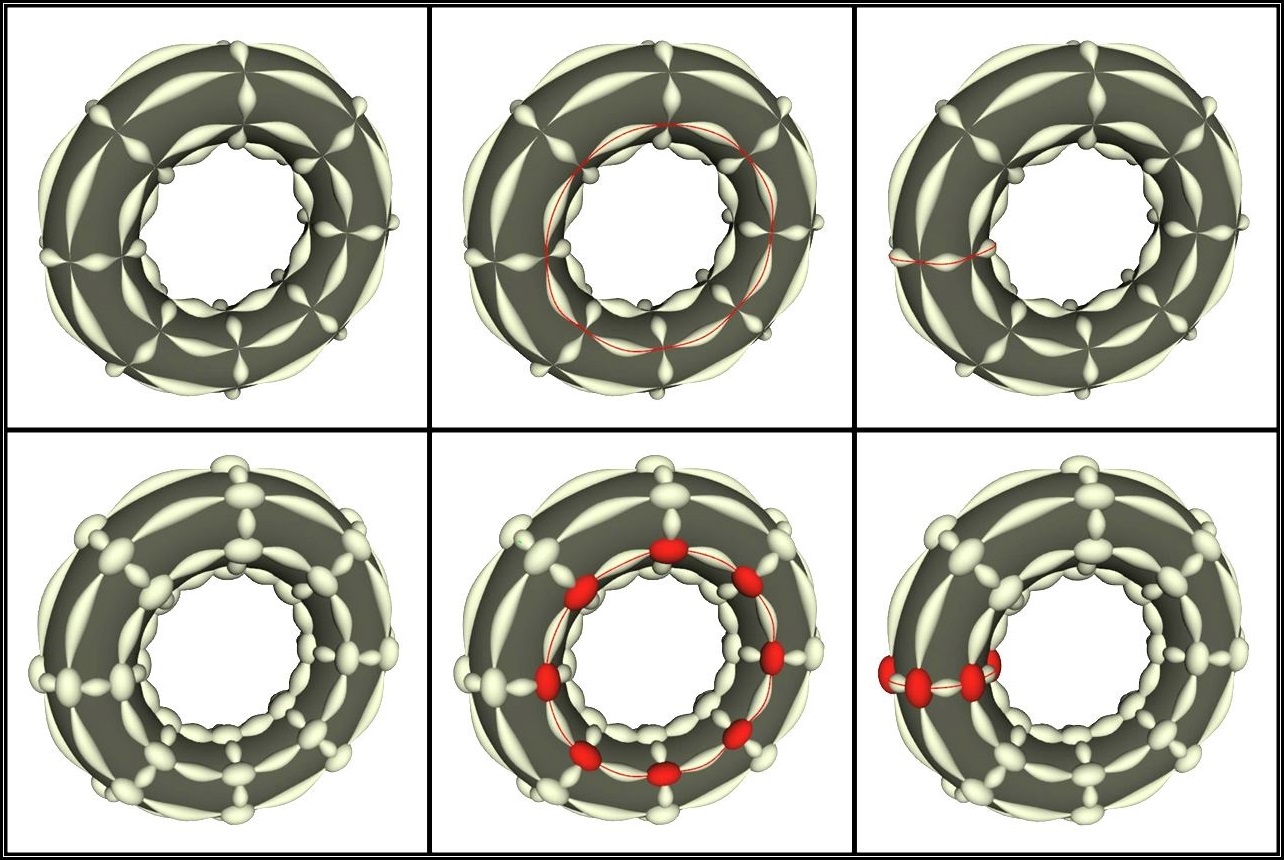}
  \caption{\textit{First row: A schematic representation of $X_0$ with $(n,m) = (6,8)$. Pinched tori have one (nondegenerate) cycle as a dominant feature, so their product naturally resembles a torus. The light areas are those at which one of the two components degenerates to a point, so they are 2-dimensional varieties. The dark areas are locally 4-dimensional and we make no attempt to represent them. The second and third picture highlight natural representatives of $uv$ and $vu$, which are locally $1$-dimensional and whose classes thus clearly vanish.\\
  Second row: A schematic representation of $\widehat{X}_0$, which differs by the replacement of each node by a sphere. The second and third picture again highlight representatives of $uv$ and $vu$, but this time the classes do not vanish since they respectively contain $m$ and $n$ different spheres.}}\label{sing4}
\end{figure}

Finally, we rather explicitly prove the claim from the previous proof:

\begin{lem}\label{lemcalc}
The claim from the proof of Theorem \ref{thmvanish} holds up to signs of $A_i,B_j,uu,uv,vu$:
\begin{enumerate}[\hspace{0.9 cm} a)]
    \item The image of $uu$ in $\mathrm{H}_2(\widehat{X}_0)$ is $0$.
    \item $\delta(A_i) = \sum_{j}(S_{i,j}-S_{i+1,j})$ for $i\in\mathbf Z/n$ and $\delta(B_j) = \sum_{i}(S_{i,j}-S_{i,j+1})$ for $j\in\mathbf Z/m$.
    \item The image of $uv$ in $\mathrm{H}_2(\widehat{X}_0)$ is $-\sum_{j}\alpha(S_{i,j})$ (for any fixed $i\in\{1,2,\ldots,n\}$, the~choice~of which doesn't matter by the preceding item) and the image of $vu$ in $\mathrm{H}_2(\widehat{X}_0)$~is~$\sum_{i}\alpha(S_{i,j})$ (for any fixed $j\in\{1,2,\ldots,m\}$).
\end{enumerate}
Here, the map $\alpha : \mathrm{H}_2(\bigsqcup\mathbf S^2_{i,j})\rightarrow\mathrm{H}_2(\widehat{X}_0)$ is given by inclusion of topological spaces. The class~$S_{i,j}$ is a generator of $\mathrm{H}_2(\mathbf S^2_{i,j})\simeq\mathbf Z$ and the classes $(A_i)^n_{i=1},(B_j)^m_{j=1}$ are the $n+m$ generators of: 
$$\mathrm{H}_3(X_0)\cong(\mathrm{H}_2(E\I_0)\otimes\mathrm{H}_1(E\II_0))\oplus(\mathrm{H}_1(E\I_0)\otimes\mathrm{H}_2(E\II_0))$$
\end{lem}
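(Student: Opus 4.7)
The plan is local analysis near each node of $X_0$ indexed by $(i,j)$. Use local coordinates in which $E\I = \{xy = z\}$, $E\II = \{\xi\eta = z\}$, so $X = \{xy = \xi\eta\} \subset \mathbf{C}^4$ is the standard ordinary double point; the small resolution $\widehat{X}$ is realized locally by blowing up either $\{x = \xi = 0\}$ or $\{x = \eta = 0\}$, producing the exceptional $\mathbf{P}^1 = \mathbf{S}^2_{i,j}$. For concreteness I work with the first choice; the other is handled symmetrically.

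For part (a), I represent $uu$ by $T_p = \{|x| = c_1,\, |\xi| = c_2,\, y = p/x,\, \eta = p/\xi\} \subset X_p$, a torus in a neighborhood of one node. Under the deformation retract of Proposition~\ref{propdefret}, $T_p$ slides to $T_0 = \{|x| = c_1,\, |\xi| = c_2,\, y = \eta = 0\}$ inside the branch $\{y = \eta = 0\} \cong \mathbf{C}^2_{x,\xi}$ of $X_0$. The proper transform of this branch in $\widehat{X}$ is $\mathrm{Bl}_0\mathbf{C}^2 \cong \mathrm{Tot}(\mathcal{O}(-1)\to\mathbf{P}^1)$, which deformation retracts onto $\mathbf{S}^2_{i,j}$ via $(x,\xi)\mapsto[x:\xi]$. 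The image of $T_0$ is the circle $\{[c_1 : c_2 e^{i\psi}] : \psi \in [0, 2\pi)\}$, a one-dimensional subset, so the induced map $\mathrm{H}_2(T^2) \to \mathrm{H}_2(\mathbf{S}^2_{i,j})$ factors through $\mathrm{H}_2(S^1) = 0$, giving $uu \mapsto 0$.

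For part (b), the class $A_i$ is represented by the $3$-cycle $C_i \times L$, where $C_i$ is the $i$-th sphere component of $E\I_0$ (meeting the pinches indexed $i-1$ and $i$) and $L$ is a longitude cycle of $E\II_0$ through every pinch. This $3$-cycle passes through the $2m$ nodes indexed $(i-1, j)$ and $(i, j)$ for $j \in \mathbf{Z}/m$. To compute $\delta(A_i)$ via Mayer--Vietoris, split it as $\sigma_1 + \sigma_2$ with $\sigma_2$ contained in the balls $\mathbf{B}^3_{i', j}$, so that $\partial\sigma_1 = -\partial\sigma_2$ is a $2$-cycle in $\bigsqcup \mathbf{S}^2_{i', j}$. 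Near each node, the local piece of the $3$-cycle is a disk in $C_i$ times an arc of $L$ through the corresponding pinch: a $3$-ball whose boundary sphere maps with degree $\pm 1$ onto $\mathbf{S}^2_{i', j}$ by a direct calculation in the blow-up model. The two boundary pinches of $C_i$ induce opposite orientations on their respective disks, contributing with opposite signs; summing over $j$ gives $\delta(A_i) = \sum_j(S_{i,j} - S_{i\pm 1,j})$, matching the claim up to the cyclic reindexing and sign freedom in the statement. The formula for $B_j$ follows by symmetry.

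For part (c), represent $uv = u_1 \otimes v_2$ by (a small meridian $u_1$ around the $i$-th pinch of $E\I_p$) $\times$ ($v_2 = $ longitude of $E\II_p$). Under the retract, $u_1$ becomes a small circle around that pinch in one sphere component of $E\I_0$ and bounds a disk $D$ there. Thus $[u_1 \times v_2]$ equals, up to homology, the obstruction to extending $D \times L$ to a valid $3$-chain in $\widehat{X}_0$; this obstruction is concentrated at the $m$ nodes $(i,j)$ that $D \times L$ passes through. By the same local linking calculation as in part (b), each node contributes $\pm S_{i,j}$, with a sign common to all $m$ of them (they all arise from the single disk $D$), giving $uv \mapsto \pm\sum_j \alpha(S_{i,j})$; symmetrically $vu \mapsto \pm\sum_i \alpha(S_{i,j})$. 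The technical heart, and main obstacle, is the local degree calculation: verifying that at one node, the boundary $2$-sphere of the lifted disk-times-arc represents $\pm 1$ times the exceptional $\mathbf{P}^1$. Once this is settled, all three parts follow by summing local contributions, with signs absorbed into orientations of the generators as the lemma permits.
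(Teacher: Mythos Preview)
Your approach is essentially the same as the paper's: local analysis near each node in explicit coordinates on the ordinary double point and its small resolution, followed by summing over nodes. Part (a) is correct and matches the paper's argument (the paper degenerates the second factor rather than retracting the branch, but both show the torus factors through a circle). For parts (b) and (c), the paper does exactly what you defer as ``the technical heart'': it writes down an explicit piecewise-defined continuous map $F:[0,1]\times(\mathbf R/\mathbf Z)\times\mathbf R\to\widehat{X}_0$ that interpolates between a representative of $uv$ (at $b_1=1$) and a parametrization of the exceptional $\mathbf P^1$ plus two half-lines (at $b_1=0$), and then reads off the degree-$\pm1$ claim by inspection of $F(0,-,-)$. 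The same map $F$ is reused to compute the boundary contributions for $\delta(A_i)$ after reinterpreting $\delta$ via the long exact sequence of the pair $(\widehat{X}_0,\bigsqcup\mathbf S^2_{i,j})$, which is cleaner than working directly with the Mayer--Vietoris split $\sigma_1+\sigma_2$ you describe (since the $3$-cycle $C_i\times L$ lives in $X_0$, not literally in $\widehat{X}_0\cup\bigsqcup\mathbf B^3_{i,j}$, and the identification is only up to homotopy). So your outline is correct but incomplete precisely at the point you flag; completing it amounts to producing the paper's map $F$ or an equivalent explicit homotopy.
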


\begin{proof}
We first prove statements (a) and (c), then (b).
Each surface $E\J$ is locally of the form $\{xy=z\}$ around a node in $E\J_0$, where $z$ is the local coordinate on a disk around $0$ in $\mathbf P^1$.
We are interested in the degeneration of generic cycles at $z=0$ along some path in the base disk, which we may assume to be the line $\{z>0\}$. Define a continuous function
$$\Phi \; :\; \mathbf R\times\frac{\mathbf R}{\mathbf Z}\times[0,1]\longrightarrow\{xy=z\}\,\subseteq\,\mathbf C^2\times[0,1]$$
$$\Phi(b, t, z) = \left(e^{2\pi it}(-b+\sqrt{b^2+z}),\; e^{-2\pi it}(b+\sqrt{b^2+z}),\; z\right)$$
which commutes with the projection to $z\in[0, 1]$ and let $\Phi_z(b, t)\coloneqq\Phi(b, t, z)$ fiberwise. All~the functions $\Phi_z$, for $z\neq 0$, are homeomorphisms of cylinders. Similarly, $\Phi_0$ is surjective, but only injective on $b\neq 0$; indeed, it sends the cylinder $\mathbf R\times(\mathbf R/\mathbf Z)$ to the cone $\{xy=0\}$ by contracting the circle $\{0\}\times(\mathbf R/\mathbf Z)$ to a point. Moreover, the set $\mathbf R_+\times(\mathbf R/\mathbf Z)$ maps into the line $\{x = 0\}$ and the set $\mathbf R_-\times(\mathbf R/\mathbf Z)$ into the line $\{y = 0\}$.
In our situation, any cycle $\Phi_z(b,-)\subseteq E\J_z$~goes~to the vanishing cycle $\Phi_0(b,-)\subseteq E\J_0$, which is contracted to a point as $b\rightarrow 0$. On the~other~hand, any line $\Phi_z(-,t)\subseteq E\J_z$ can be seen as part of the larger, nondegenerating cycle.
\medskip

Observe a node of $X$ which lies in $0$. Then it is locally of the form $\{x_1y_1 = x_2y_2 = z\}$ and a small resolution by blowing-up the divisor $\{y_1 = y_2 = 0\}$ makes it into the form
$$\big\{\,\big((x_1,y_1),(x_2,y_2),z,(p:q)\big)\in\mathbf C^2\times\mathbf C^2\times\mathbf C\times\mathbf P^1\,\mid\, x_1y_1 = x_2y_2 = z,\; px_2 = qx_1,\; py_1 = qy_2\,\big\}$$
since $x_1/x_2 = y_2/y_1$ on a dense open set. This is clearly the same as taking the small resolution with respect to $\{x_1 = x_2 = 0\}$. By Schoen's construction (specifically Lemma 3.1 in \cite{Sch88}), we always take either this small resolution or the opposite one along $\{x_1 = y_2 = 0\}$ (equivalently $\{y_1 = x_2 = 0\}$). But all resulting formulas in this proof are (anti)symmetric with respect to this choice. This is the independence of choice alluded to in Remark \ref{remcuriosity}.
\medskip

We again consider points $(\Phi_z(b_1,t_1),\Phi_z(b_2,t_2),z)$ of $X$, for $z\in[0,1]$, that lift uniquely~to~some $(\Phi_z(b_1,t_1),\Phi_z(b_2,t_2),z,(u:v))\in\widehat{X}$ when $(b_1,b_2,z)\neq (0,0,0)$. If we make the arbitrary choice $b_1 > 0$, we have $y_1\neq 0$ and $(u : v) = (y_2 : y_1)$, hence we may write this point as:
\vspace{-13pt}

$$\Psi_z(b_1,t_1,b_2,t_2)\coloneqq \left(\Phi_z(b_1,t_1),\Phi_z(b_2,t_2),z,\big(e^{-2\pi it_2}(b_2+\sqrt{\smash{b_2^2}+z}):e^{-2\pi it_1}(b_1+\sqrt{\smash{b_1^2}+z})\big)\right)$$
\vspace{-12pt}

The cycle $uu$ is represented by $\Psi_z(b_1,-,b_2,-)$ for any $b_1,b_2$ in a generic fiber $\smash{\widehat{X}_z}$ over $z\neq 0$. If we let $z = b_2 = 0$, this defines a $2$-cycle in $\widehat{X}_0$ (as it does not intersect the node of $X$, since $b_1 > 0$). However, this $2$-cycle factors through a circle $\Psi_0(b_1,-,0,-)$ (with $x_1 = x_2 = y_2 = 0$, but $|y_1| = 2b_1\neq 0$), and thus its class in $\smash{\widehat{X}}$ is trivial. This proves (a).
\medskip

Now, the node in $X$ is replaced in $\widehat{X}$ by the sphere $\smash{\mathbf S^2_{i_0,j_0}} = \{0,0,0,0,0,(u:v)\mid (u:v)\in\mathbf P^1\}$ whose fundamental class in $\mathrm{H}_2(\widehat{X}_0)$ is $\alpha(S_{i_0,j_0})$. The classes $vu$ and $uv$ are handled analogously, so we only prove the statement for $uv$. We may suppose that in any fiber, near this node, this class is represented by $\Psi_z(b_1,-,-,0)$, which also makes sense as we take $z = 0$. Formally, one may look at this cycle as an element of $\mathrm{Z}_2(\widehat{X}_0, \widehat{X}_0\setminus D)$, where $D$ is some open neighborhood of $\mathbf S^2_{i_0,j_0}$ in $\smash{\widehat{X}_0}$.
It suffices to prove that the difference between this cycle and a cycle representing the element $\pm\alpha(S_{i_0,j_0})$ in $\smash{\mathrm{H}_2(\widehat{X}_0, \widehat{X}_0\setminus D)}$ is a relative boundary.
For this, define a continuous map $F : [0,1]\times(\mathbf R/\mathbf Z)\times\mathbf R\rightarrow\widehat{X}_0$ by letting $F(b_1,t_1,a)$ equal the following piecewise expression:
$$\left\{
\begin{aligned}
    \big(\Phi_0(b_1,t_1),\, &\Phi_0\big(a&&,0\big),0,\big(0 &\hspace{-7pt}:\hspace{-7pt}&& 1&\big)\big)
\;\;\textrm{ if }a\leq 0\\
    \big(\Phi_0(b_1,t_1),\, &\Phi_0\big(ab_1/(1-a)&&,0\big),0,\big(e^{2\pi it_1}a &\hspace{-7pt}:\hspace{-7pt}&& 1-a&\big)\big)
\;\;\textrm{ if }0 < a < 1-\sqrt{b_1}\\
    \big(\Phi_0(b_1,t_1),\, &\Phi_0\big((1-\sqrt{b_1})\sqrt{b_1}&&,0\big),0,\big(e^{2\pi it_1}(1-\sqrt{b_1}) &\hspace{-7pt}:\hspace{-7pt}&& \sqrt{b_1}&\big)\big)
\;\;\textrm{ if }a = 1-\sqrt{b_1}\\
    \big(\Phi_0(b_1,t_1),\, &\Phi_0\big(a-(1-\sqrt{b_1})^2&&,0\big),0,\big(e^{2\pi it_1}\big(a-(1-\sqrt{b_1})^2\big) &\hspace{-7pt}:\hspace{-7pt}&& b_1&\big)\big)
\;\;\textrm{ if }a > 1-\sqrt{b_1}
\end{aligned}
\right.$$
The continuity is clear for $b_1\neq 0$ and easily checked elsewhere. This is the reason that we use $1-\sqrt{b_1}$ everywhere, because the analogous formula with $1-b_1$ is discontinuous at $(0,t_1,1)$.

Also, $F$ is well-defined; it lands into $\widehat{X}_0$. Indeed, if $b_1\neq 0$, then $F(b_1,t_1,a) = \Psi_0(b_1,t_1,b_2,0)$ for some choice of $b_2 = b_2(b_1, a)$. In particular, $F(1,t_1,a) = \Psi_0(1,t_1,a,0)$ for all $t_1,a$, and hence $F(1,-,-)$ is (part of) a representative of $uv$. Otherwise, $b_1 = 0$ and this function becomes:
$$F(0,t_1,a) = \left\{
\begin{aligned}
    \big(0,0,\, &\Phi_0\big(a &&,0\big),0,\big(0 &\hspace{-7pt}:\hspace{-7pt}&& 1&\big)\big)
\;\;\textrm{ if }a < 0\\
    \big(0,0,\, &\Phi_0\big(0 &&,0\big),0,\big(e^{2\pi it_1}a &\hspace{-7pt}:\hspace{-7pt}&& 1-a&\big)\big)
\;\;\textrm{ if }0\leq a\leq 1\\
    \big(0,0,\, &\Phi_0\big(a-1 &&,0\big),0,\big(1 &\hspace{-7pt}:\hspace{-7pt}&& 0&\big)\big)
\;\;\textrm{ if }a > 1
\end{aligned}
\right.$$
The first and third part are just lines, while the middle part parametrizes the sphere $\mathbf S^2_{i_0,j_0}$. Thus we may triangulate the image of $F$ to show that $uv$ and $\pm\alpha(S_{i_0,j})$ agree in $\mathrm{H}_2(\widehat{X}_0, \widehat{X}_0\setminus D)$.  This proves (c), since we can complete the cycle $uv$ by doing the same for $S_{i_0,j}$ with $j\in\{1,\ldots,m\}$ and then summing them all together. The elements $S_{i_0,j}$ are fixed up to sign by their definition; we may choose them such that the expression $\pm\alpha(S_{i_0,j})$ above becomes in fact $-\alpha(S_{i_0,j})$.
\medskip

All that remains now is to prove (b). We start by interpreting the map $\delta$:
The Mayer-Vietoris exact sequence from the proof of Theorem \ref{thmvanish} maps naturally into the long exact sequence in singular homology of the pair $(\widehat{X}_0,\; \bigsqcup\nolimits_{i,j}\mathbf S^2_{i,j})$,
\vspace{-15pt}

\begin{center}\begin{tikzcd}[column sep = small]
    \!\mathrm{H}_k\!\left(\bigsqcup\mathbf S^2_{i,j}\right)\arrow[r]\arrow[d, equal] & \mathrm{H}_k\!\left(\widehat{X}_0\sqcup\bigsqcup\mathbf B^2_{i,j}\right)\arrow[r]\arrow[d] & \mathrm{H}_k\big(X_0\big)\arrow[r, "\delta"]\arrow[d, "\beta"] & \mathrm{H}_{k-1}\!\left(\bigsqcup\nolimits\mathbf S^2_{i,j}\right)\arrow[r]\arrow[d, equal] & \mathrm{H}_{k-1}\!\left(\widehat{X}_0\sqcup\bigsqcup\mathbf B^2_{i,j}\right)\arrow[d]\\
    \!\mathrm{H}_k\!\left(\bigsqcup\mathbf S^2_{i,j}\right)\arrow[r] & \mathrm{H}_k\big(\widehat{X}_0\big)\arrow[r] & \mathrm{H}_k\big(\widehat{X}_0,\;\bigsqcup\nolimits\mathbf S^2_{i,j}\big)\arrow[r, "\delta'"] & \mathrm{H}_{k-1}\!\left(\bigsqcup\mathbf S^2_{i,j}\right)\arrow[r] & \mathrm{H}_{k-1}\big(\widehat{X}_0\big)
\end{tikzcd}\end{center}
\vspace{-5pt}

\noindent
where $\beta : \mathrm{H}_k(X_0)\rightarrow\mathrm{H}_k(\widehat{X}_0, \bigsqcup\nolimits_{i,j}\mathbf S^2_{i,j})$ is constructed by taking a $k$-cycle $\gamma$ in $\widehat{X}_0\cup\bigsqcup\mathbf B^2_{i,j}$ (which is homotopy equivalent to $X_0$) and modifying its part inside $\bigsqcup\mathbf B^2_{i,j}$ so that it becomes a $k$-chain with boundary in $\mathbf S^2_{i,j}$. An alternative and more general way of doing this construction (and also the reason why the Mayer-Vietoris sequence exists in the first place) is by saying that each $\mathbf S^2_{i,j}\subseteq\widehat{X}_0$ is a deformation retract of some neighborhood, which is also not difficult to show.

Finally, for $k\geq 2$ all other vertical maps are isomorphisms, hence so is $\beta$ by the 5-lemma. We are interested in the case $k = 3$, and can thus replace $\delta$ by $\delta' : \mathrm{H}_3(\widehat{X}_0,\,\bigsqcup\mathbf S^2_{i,j})\rightarrow\mathrm{H}_2(\bigsqcup\mathbf S^2_{i,j})$, which is easier to work with. Writing
$$\mathrm{H}_3(\widehat{X}_0,\,\bigsqcup\mathbf S^2_{i,j})\cong\mathrm{H}_3(X_0)\cong(\mathrm{H}_2(E\I_0)\otimes\mathrm{H}_1(E\II_0))\oplus(\mathrm{H}_1(E\I_0)\otimes\mathrm{H}_2(E\II_0))$$
we take as $A'_i\in\mathrm{H}_3(\widehat{X}_0,\,\bigsqcup\mathbf S^2_{i,j})$ the product of the nondegenerate cycle in $\mathrm{H}_1(E\II_0)$ with the $i$-th of the $n$ ``pockets'' in $\mathrm{H}_2(E\I_0)$ of the $n$-pinched torus $E\I_0$. Then $\delta'(A'_i) = \delta(A_i)$ for $A_i\in\mathrm{H}_3(X_0)$. Clearly then $\delta'(A'_{i_0})$ is of the form $\sum_j{k_{i_0,j}S_{i_0,j}}+\sum_j{k_{i_0+1,j}S_{i_0+1,j}}$ for some $k_{i,j}\in\mathbf Z$. However, if we take a small enough neighborhood $D$ of $\mathbf S^2_{i_0,j_0}$, then we can assume that $A'_{i_0}$ is represented by a triangulation of the image of $F$ in the quotient $\mathrm{Z}_3(\widehat{X}_0,\,D\sqcup\mathbf S^2_{i_0,j_0})$ of $\mathrm{Z}_3(\widehat{X}_0,\,\bigsqcup\mathbf S^2_{i,j})$, where $F$ is the map constructed earlier. But we've seen from the form of $F(0,-,-)$ that this implies $k_{i_0,j_0} = 1$. Similarly, $k_{i_0+1,j_0} = -1$ by mirroring $F$. This is true for all $j_0$, which ends the proof of (b) and of the lemma. 
\end{proof}

To conclude, we have found an explicit way to express the vanishing cycles of $X$ and $\widehat{X}$ at a point $s\in\Sigma$. These results supply the final ingredient to the procedure described in Remark~\ref{remsecondmain}, allowing us to calculate the periods of $\widetilde{\omega}$. In the following section, we catalog the results of this computation in our distinguished cases of Type I, II and III.
\section{Final Results}

This chapter contains the final results of this paper, computed explicitly for all examples of Type I, II or III as in Definition \ref{defntypes}. For a $3$-form $\widetilde{\omega}$ as defined in Section 2, we denote by $\mathcal I(\widehat{X})$ and $\mathcal I(X)$ its respective $\mathbf Z$-modules of periods in $\mathbf C$ (see Subsection 7.3 for the modified $3$-forms $z^n\widetilde{\omega}$ considered in Type III). In each such example we have applied our procedure, summed up in Example \ref{exmpmain} and Remark $\ref{remsecondmain}$, to compute a basis of $\mathcal I(\widehat{X})$, resp.\ $\mathcal I(X)$. These bases~are~listed below, along with the equations $g_2\J,g_3\J$ defining the elliptic surfaces $E\J$ in each example.

\begin{rem}\label{remfinal}
As discussed in Section 2, the examples of Type I and II are rigid and $\mathrm{rk}_{\mathbf Z}\,\mathcal I(\widehat{X}) = 2$. In general, $\mathcal I(\widehat{X})\subseteq\mathcal I(X)$ and the latter group can have bigger rank (such as $3,4,5$ in Type II, see Subsection 7.2).
On the other hand, it is easy to see that $\mathrm{rk}_{\mathbf Z}\,\mathcal I(X) = \mathrm{rk}_{\mathbf Z}\,\mathcal I(\widehat{X})$ whenever $E\I = E\II$ (so in particular in our Types I and III, but not II): In the notation of Section 6,
$$VC'_s(X) = (\mathbf Z\cdot uu)\oplus(\mathbf Z\cdot uv)\oplus(\mathbf Z\cdot vu)
\textrm{ ,}\;\;\;\;
VC'_s(\widehat{X}) = (\mathbf Z\cdot uu)\oplus (n\mathbf Z\cdot (uv + vu))$$
for a point $s\in\mathbf P^1$ with fiber of type $I_n$. Given $\ell\in VC'_s(X)$, we have $n(\ell+\bar{\ell})\in VC_s(\widehat{X})$, where $\bar{\ell}$ is the cycle symmetric to $\ell$ (with respect to $uv$ and $vu$). Since integration of $\widehat{\omega}$ is preserved under this symmetry of $E\I$ and $E\II$, we conclude from Theorem \ref{thmperMV} that $2d\cdot\mathcal I(X)\subseteq\mathcal I(\widehat{X})$~for:
$$\smash{d\coloneqq \mathrm{lcm}\left\{n\in\mathbf Z\mid (\exists\, s\in\mathbf P^1)\, E_s\I\textrm{ is of type }I_n\right\}}$$
Hence $\mathcal I(X)$ and $\mathcal I(\widehat{X})$ indeed have the same rank $r$ and the index $[\mathcal I(X) : \mathcal I(\widehat{X})]$ divides $(2d)^r$. This observation can be seen to hold in all examples of Type I and III.
\end{rem}

Finally, each example admits an associated modular form, which can be found in \cite{Mey05}.~The notation $N/m$ designates ``the $m$-th newform of weight $4$ for $\Gamma_0(N)$ with rational coefficients'' (see \cite[1.8.3]{Mey05} for more details), also used in the algebra system Magma. For each example, we provide this modular form and its two periods, which can be related to our results by factors that are quotients of small integers. 
In~particular, one can confirm the numerical precision~of~our results by checking that these relations indeed hold.

All of the values printed below, as well as those excluded for practical reasons, can be found~in plain-text tables (also readable by Maple) at the following link:\\
\centerline{\url{https://github.com/donlagic-azur/numcalc-schoen-supplement/tree/main/results}}

\subsection{Type I}
We take $E\I=E\II$ to be an elliptic surface with exactly $4$ (semistable) singular fibers. The list of (all $6$) such surfaces can be found in \cite[\textsection 4, Table 1]{Sch88}. Only $4$ of them have all singular fibers lying above real points, and they are determined by the Kodaira~types of their singular fibers: $(I_2,I_2,I_4,I_4),\;(I_1,I_1,I_2,I_8),\;(I_1,I_1,I_5,I_5),\;(I_1,I_2,I_3,I_6)$. Explicit forms of these surfaces can be found in \cite{Her91}.
As discussed above, $\mathrm{rk}_{\mathbf Z}\,\mathcal I(X) = \mathrm{rk}_{\mathbf Z}\,\mathcal I(\widehat{X}) = 2$ in all examples. The first two examples are isogenous, hence give the same results. 
\vfill

\hrule
\medskip
\noindent
The surface $E\I=E\II$ with singular fibers of types $(I_1,I_1,I_2,I_8)$ is defined over $\mathbf P^1$ by functions
\begin{align*}
g_2(X,Y)&\coloneqq 3(16X^4 - 16X^2Y^2 + Y^4)\\
g_3(X,Y)&\coloneqq 64X^6 - 96X^4Y^2 + 30X^2Y^4 + Y^6
\end{align*}
and these fibers lie, respectively, over points: $-1,1,0,\infty$
$$\mathcal I(\widehat{X})\textrm{ has basis }
\left\{\begin{aligned}
18.6601680444816862921513542049 &+ 0 \, i \\
0 &+ 23.1231662167091830576134155719 \, i 
\end{aligned}\right\}$$
$$\mathcal I(X)\textrm{ has basis }
\left\{\begin{aligned}
9.33008402224084314607567710247 &+ 0\, i \\
0 &+ 5.78079155417729576440335389298 \, i 
\end{aligned}\right\}$$
The associated modular form $16/1$ ($16.4.a.a$ in \cite{LMFDB}) has periods $\omega_1,\omega_2$ such that
$$\begin{aligned}
&\pi^2\omega_1 = & 1.08389841640824295582562885492 &+ 0\, i \\
&\pi^2\omega_2 = & 0 &- 1.74939075417015808988918945670 \, i 
\end{aligned}$$
and the above basis of $\smash{\mathcal I(\widehat{X})}$ can be written as: $\smash{(64i/6)\pi^2\omega_2}$ and $\smash{(64i/3)\pi^2\omega_1}$
\pagebreak

\hrule
\medskip
\noindent
The surface $E\I=E\II$ with singular fibers of types $(I_2,I_2,I_4,I_4)$ is defined over $\mathbf P^1$ by functions
\begin{align*}
g_2(X,Y)&\coloneqq 12(X^4 - X^2Y^2 + Y^4)\\
g_3(X,Y)&\coloneqq 4(2X^6 - 3X^4Y^2 - 3X^2Y^4 + 2Y^6)
\end{align*}
and these fibers lie, respectively, over points: $-1,1,0,\infty$
$$\mathcal I(\widehat{X})\textrm{ has basis }
\left\{\begin{aligned}
18.6601680444816862921513542049 &+ 0 \, i \\
0 &+ 23.1231662167091830576134155719 \, i 
\end{aligned}\right\}$$
$$\mathcal I(X)\textrm{ has basis }
\left\{\begin{aligned}
9.33008402224084314607567710247 &+ 0\, i \\
0 &+ 5.78079155417729576440335389298 \, i 
\end{aligned}\right\}$$
The associated modular form $8/1$ ($8.4.a.a$ in \cite{LMFDB}) has periods $\omega_1,\omega_2$ such that
$$\begin{aligned}
&\pi^2\omega_1 = & 0 &+ 1.08389841640824295582562885492\, i \\
&\pi^2\omega_2 = & 1.74939075417015808988918945670 &+ 0 \, i 
\end{aligned}$$
and the above basis of $\smash{\mathcal I(\widehat{X})}$ can be written as: $\smash{(64/6)\pi^2\omega_2}$ and $\smash{(64/3)\pi^2\omega_1}$
\vfill

\hrule
\medskip
\noindent
The surface $E\I=E\II$ with singular fibers of types $(I_1,I_1,I_5,I_5)$ is defined over $\mathbf P^1$ by functions
\begin{align*}
g_2(X,Y)&\coloneqq 3(X^4 - 12X^3Y + 14X^2Y^2 + 12XY^3 + Y^4)\\
g_3(X,Y)&\coloneqq X^6 - 18X^5Y + 75X^4Y^2 + 75X^2Y^4 + 18XY^5 + Y^6
\end{align*}
and these fibers lie, respectively, over points: $\left(\frac{1-\sqrt{5}}{2}\right)^5,\left(\frac{1+\sqrt{5}}{2}\right)^5,0,\infty$
$$\mathcal I(\widehat{X})\textrm{ has basis }
\left\{\begin{aligned}
13.8030652044679021961193422703 &+ 0 \, i\\
0 &+ 21.5650087259302781487215564481 \, i
\end{aligned}\right\}$$
$$\mathcal I(X)\textrm{ has basis }
\left\{\begin{aligned}
6.90153260223395109805967113517 &+ 0 \, i\\
0 &+ 2.15650087259302781487215564481 \, i 
\end{aligned}\right\}$$
The associated modular form $5/1$ ($5.4.a.a$ in \cite{LMFDB}) has periods $\omega_1,\omega_2$ such that
$$\begin{aligned}
&\pi^2\omega_1 = & 2.07045978067018532941790134055 &- 1.94085078533372503338494008033\, i\\
&\pi^2\omega_2 = & -2.07045978067018532941790134055 &+ 2.58780104711163337784658677377\, i
\end{aligned}$$
and the above basis of $\smash{\mathcal I(\widehat{X})}$ can be written as: $\smash{(80/3)\pi^2\omega_1+20\pi^2\omega_2}$ and $\smash{(100/3)\pi^2(\omega_1+\omega_2)}$
\vfill

\hrule
\medskip
\noindent
The surface $E\I=E\II$ with singular fibers of types $(I_1,I_2,I_3,I_6)$ is defined over $\mathbf P^1$ by functions
\begin{align*}
g_2(X,Y)&\coloneqq 12(X^4 - 4X^3Y + 2XY^3 + Y^4)\\
g_3(X,Y)&\coloneqq 4(2X^6 - 12X^5Y + 12X^4Y^2 + 14X^3Y^3 + 3X^2Y^4 + 6XY^5 + 2Y^6)
\end{align*}
and these fibers lie, respectively, over points: $4,-1/2,0,\infty$
$$\mathcal I(\widehat{X})\textrm{ has basis }
\left\{\begin{aligned}
7.29113457566789259514571215945 &+ 19.2156302580844571356028493573 \, i \\
7.29113457566789259514571215945 &- 19.2156302580844571356028493573 \, i 
\end{aligned}\right\}$$
$$\mathcal I(X)\textrm{ has basis }
\left\{\begin{aligned}
7.29113457566789259514571215945 &+ 0\, i \\
0 &+ 3.20260504301407618926714155955 \, i 
\end{aligned}\right\}$$
The associated modular form $6/1$ ($6.4.a.a$ in \cite{LMFDB}) has periods $\omega_1,\omega_2$ such that
$$\begin{aligned}
&\pi^2\omega_1 = & 0 &+ 0.800651260753519047316785389880\, i\\
&\pi^2\omega_2 = & -1.82278364391697314878642803987 &+ 4.00325630376759523658392694940\, i
\end{aligned}$$
and the above basis of $\smash{\mathcal I(\widehat{X})}$ can be written as: $\smash{44\pi^2\omega_1-4\pi^2\omega_2}$ and $\smash{-4\pi^2\omega_1-4\pi^2\omega_2}$
\pagebreak

\subsection{Type II}
We take $E\I$ to be the surface with singular fibers of types $(I_1,I_2,I_3,I_6)$, defined by $g_2\I,g_3\I$ as above. If $\phi$ is a M\"obius transformation permuting $-1/2,0,\infty$ (and hence the singular fibers $I_2,I_3,I_6$), we choose $E\II$ to be the pullback of $E\I$ by $\phi$. This determines the functions $g_2\II,g_3\II$ up to factors $u^4,u^6$, and we write in each example the particular choice for which we calculated the results.

As expected, the lattice $\mathcal I(\widehat{X})$ has rank $2$. However, the groups $\mathcal I(X)$ have higher rank. Out of the five examples (there are five nontrivial elements $\phi$ in the permutation group $\Sigma_3$), we note that, in the order below: two have rank $3$, one has rank $4$, two have rank $5$.

The two cases with rank $5$ are the ones given by automorphisms of order $3$ in $\Sigma_3$ (the rest are given by automorphisms of order $2$). Their results are related by a factor of $4$, which is expected since the two threefolds are isogenous (which can be seen after exchanging $E\I $~and~$E\II$), and they have the same associated modular form. \vfill

\hrule
\medskip
\noindent
The pullback $E\II$ of $E\I$ by $z\mapsto -z/(2z+1)$ is, up to rescaling, given by functions
\begin{align*}
    g_2\II(z,1) &= 108z^4 + 144z^3 + 144z^2 + 72z + 12\\
    g_3\II(z,1) &= -216z^6 - 432z^5 + 360z^3 + 252z^2 + 72z + 8
\end{align*}
and its fibers of types $(I_1,I_2,I_3,I_6)$ lie, respectively, over points: $-4/9,\infty,0,-1/2$

$$\mathcal I(\widehat{X})\textrm{ has basis }
\left\{\begin{aligned}
10.1775023004434042782842363056 &+ 0 \, i\\
0 &+ 18.4818859696295144823284523403 \, i
\end{aligned}\right\}$$
$$\mathcal I(X)\textrm{ has basis }
\left\{\begin{aligned}
2.91210278431660900940154286414 &+ 0 \, i\\
10.1775023004434042782842363056 &+ 0 \, i\\
0 &+ 1.84818859696295144823284523403 \, i
\end{aligned}\right\}$$
The associated modular form $10/1$ ($10.4.a.a$ in \cite{LMFDB}) has periods $\omega_1,\omega_2$ such that
$$\begin{aligned}
&\pi^2\omega_1 = & 1.52662534506651064174263544586 &+ 4.15842434316664075852390177654\, i\\
&\pi^2\omega_2 = & 0 &- 1.38614144772221358617463392551\, i
\end{aligned}$$
and the above basis of $\smash{\mathcal I(\widehat{X})}$ can be written as: $\smash{(20/3)\pi^2\omega_1+20\pi^2\omega_2}$ and $\smash{-(40/3)\pi^2\omega_2}$
\medskip\bigskip

\hrule
\medskip
\noindent
The pullback $E\II$ of $E\I$ by $z\mapsto 1/(4z)$ is, up to rescaling, given by functions
\begin{align*}
    g_2\II(z,1) &= \dfrac{3}{64} - \dfrac{3}{4}z + 6z^3 + 12z^4\\
    g_3\II(z,1) &= \dfrac{1}{512} - \dfrac{3}{64}z + \dfrac{3}{16}z^2 + \dfrac{7}{8}z^3 + \dfrac{3}{4}z^4 + 6z^5 + 8z^6
\end{align*}
and its fibers of types $(I_1,I_2,I_3,I_6)$ lie, respectively, over points: $1/16,-1/2,\infty,0$

$$\mathcal I(\widehat{X})\textrm{ has basis }
\left\{\begin{aligned}
7.02101068370533129948622785532 &+ 56.5585057250058555647892632590 \, i\\
7.02101068370533129948622785532 &- 56.5585057250058555647892632590 \, i
\end{aligned}\right\}$$
$$\mathcal I(X)\textrm{ has basis }
\left\{\begin{aligned}
7.02101068370533129948622785532 &+ 1.27530934398204965297269158496 \, i\\
7.02101068370533129948622785532 &- 1.27530934398204965297269158496 \, i\\
0 &+ 5.85917529922885628954124316713 \, i
\end{aligned}\right\}$$
The associated modular form $21/2$ ($21.4.a.a$ in \cite{LMFDB}) has periods $\omega_1,\omega_2$ such that
$$\begin{aligned}
&\pi^2\omega_1 = & -0.752251144682714067802095841641 &- 2.01994663303592341302818797353\, i\\
&\pi^2\omega_2 = & 2.75825419716995158194101808602 &+ 6.05983989910777023908456392067\, i
\end{aligned}$$
and the above basis of $\smash{\mathcal I(\widehat{X})}$ can be written as: $\smash{-112\pi^2\omega_1-28\pi^2\omega_2}$ and $\smash{196\pi^2\omega_1+56\pi^2\omega_2}$
\medskip\bigskip\pagebreak

\hrule
\medskip
\noindent
The pullback $E\II$ of $E\I$ by $z\mapsto -1/2-z$ is, up to rescaling, given by functions
\begin{align*}
    g_2\II(z,1) &= \dfrac{27}{4} + 18z + 90z^2 + 72z^3 + 12z^4\\
    g_3\II(z,1) &= -\dfrac{27}{8} - \dfrac{27}{2}z + \dfrac{135}{2}z^2 + 180z^3 + 198z^4 + 72z^5 + 8z^6
\end{align*}
and its fibers of types $(I_1,I_2,I_3,I_6)$ lie, respectively, over points: $-9/2,0,-1/2,\infty$

$$\mathcal I(\widehat{X})\textrm{ has basis }
\left\{\begin{aligned}
15.6841094746081896506771079054 &+ 0 \, i\\
0 &+ 17.0625437087419634029794240275 \, i
\end{aligned}\right\}$$
$$\mathcal I(X)\textrm{ has basis }
\left\{\begin{aligned}
15.6841094746081896506771079054 &+ 0 \, i\\
3.25863063246525860751003087346 &+ 8.53127185437098170148971201373 \, i\\
3.25863063246525860751003087346 &- 8.53127185437098170148971201373 \, i\\
0 &+ 2.27403622608502294651981388337 \, i
\end{aligned}\right\}$$
The associated modular form $17/1$ ($17.4.a.a$ in \cite{LMFDB}) has periods $\omega_1,\omega_2$ such that
$$\begin{aligned}
&\pi^2\omega_1 = & 1.38389201246542849858915657984 &+ 4.51655568760816678314161224223\, i\\
&\pi^2\omega_2 = & -2.76778402493085699717831315967 &- 10.5386299377523891606637618985\, i
\end{aligned}$$
and the above basis of $\smash{\mathcal I(\widehat{X})}$ is given by:\! $\smash{(238/3)\pi^2\omega_1+34\pi^2\omega_2}$ and $\smash{-(68/3)\pi^2\omega_1-(34/3)\pi^2\omega_2}$
\!\!\medskip\bigskip

\hrule
\medskip
\noindent
The pullback $E\II$ of $E\I$ by $z\mapsto -1/(4z+2)$ is, up to rescaling, given by functions
\begin{align*}
    g_2\II(z,1) &= 3072z^4 + 4608z^3 + 2304z^2 + 576z + 108\\
    g_3\II(z,1) &= 32768z^6 + 73728z^5 + 64512z^4 + 23040z^3 - 1728z - 216
\end{align*}
and its fibers of types $(I_1,I_2,I_3,I_6)$ lie, respectively, over points: $-9/16,0,\infty,-1/2$

$$\mathcal I(\widehat{X})\textrm{ has basis }
\left\{\begin{aligned}
11.4118420461744051807307248949 &+ 0 \, i\\
0 &+ 4.21820528193890201592231653461 \, i
\end{aligned}\right\}$$
$$\mathcal I(X)\textrm{ has basis }
\left\{\begin{aligned}
1.15964224207708731147527428542 &+ 0 \, i\\
1.72900723259046311316331046608 &+ 0 \, i\\
5.70592102308720259036536244744 &+ 1.79835796276720374170952048120 \, i\\
5.70592102308720259036536244744 &- 1.79835796276720374170952048120 \, i\\
0 &+ 4.21820528193890201592231653461 \, i
\end{aligned}\right\}$$
The associated modular form $73/1$ ($73.4.a.a$ in \cite{LMFDB}) has periods $\omega_1,\omega_2$ such that
$$\begin{aligned}
&\pi^2\omega_1 = & 8.91061639221837116851577154289 &+ 0.520052705992467371826039094834\, i\\
&\pi^2\omega_2 = & -2.81387886070053826374182256461 &- 0.173350901997489123942012962848\, i
\end{aligned}$$
and the above basis of $\smash{\mathcal I(\widehat{X})}$ is given by:\! $\smash{(73/3)\pi^2\omega_1+73\pi^2\omega_2}$ and $\smash{-146\pi^2\omega_1-(1387/3)\pi^2\omega_2}$
\!\!\medskip\bigskip

\hrule
\medskip
\noindent
The pullback $E\II$ of $E\I$ by $z\mapsto -(2z+1)/(4z)$ is, up to rescaling, given by functions
\begin{align*}
    g_2\II(z,1) &= \dfrac{27}{4}z^4 + \dfrac{9}{2}z^3 + \dfrac{45}{8}z^2 + \dfrac{9}{8}z + \dfrac{3}{64}\\
    g_3\II(z,1) &= -\dfrac{27}{8}z^6 - \dfrac{27}{8}z^5 + \dfrac{135}{32}z^4 + \dfrac{45}{16}z^3 + \dfrac{99}{128}z^2 + \dfrac{9}{128}z + \dfrac{1}{512}
\end{align*}
and its fibers of types $(I_1,I_2,I_3,I_6)$ lie, respectively, over points: $-1/18,\infty,-1/2,0$

$$\mathcal I(\widehat{X})\textrm{ has basis }
\left\{\begin{aligned}
45.6473681846976207229228995794 &+ 0 \, i\\
0 &+ 16.8728211277556080636892661384 \, i
\end{aligned}\right\}$$
$$\mathcal I(X)\textrm{ has basis }
\left\{\begin{aligned}
4.63856896830834924590109714169 &+ 0 \, i\\
6.91602893036185245265324186434 &+ 0 \, i\\
22.8236840923488103614614497897 &+ 7.19343185106881496683808192482 \, i\\
22.8236840923488103614614497897 &- 7.19343185106881496683808192482 \, i\\
0 &+ 16.8728211277556080636892661384 \, i
\end{aligned}\right\}$$
The associated modular form $73/1$ ($73.4.a.a$ in \cite{LMFDB}) has periods $\omega_1,\omega_2$ such that
$$\begin{aligned}
&\pi^2\omega_1 = & 8.91061639221837116851577154289 &+ 0.520052705992467371826039094834\, i\\
&\pi^2\omega_2 = & -2.81387886070053826374182256461 &- 0.173350901997489123942012962848\, i
\end{aligned}$$
and the above basis of $\smash{\mathcal I(\widehat{X})}$ is given by:\! $\smash{(73/3)\pi^2\omega_1+73\pi^2\omega_2}$ and $\smash{-146\pi^2\omega_1-(1387/3)\pi^2\omega_2}$
\!\!\medskip\bigskip

\subsection{Type III}

To find formulas for the modular surface $E\I = E\II$ over $X_1(N), N\in\{7,8,10\}$, we follow \cite{Baa10} (one can also find a discussion of Tate normal forms in \cite[Chapter~4,~\textsection4]{Hus04}): We may parameterize by points $(f,g)\in\mathbf A^2$ the elliptic curves of the form
$$y^2 + ((1 + g)x + f)y = x^3 + fx^2$$
and then the condition of the point $(0,0)$ being an $N$-torsion point is a polynomial condition in $f,g$. In other words, the curve $X_1(N)$ parametrizing elliptic curves with a fixed $N$-torsion point is exactly the curve defined in $\mathbf A^2$ by a single polynomial equation $\Phi_N(f,g) = 0$. The~following equations are calculated in \cite{Baa10}:
\begin{align*}
\Phi_7(f,g) &= g^3-fg+f^2\\
\Phi_8(f,g) &= (f+1)g^2-3fg+2f^2\\
\Phi_{10}(f,g) &= g^5+fg^4-3fg^3+(3f^2+f)g^2-2f^2g+f^3
\end{align*}
We know that the resulting curves are birationally isomorphic to $\mathbf P^1$, so we may parametrize them in Maple to find functions $f(z),g(z)$, for the local coordinate $z$ on $\mathbf P^1$. After converting the resulting fibers to Weierstrass form (and clearing denominators), we may then determine the functions $g_2,g_3$ as previously, which will be presented below.
\smallskip

On $\widehat{X}$ and $X$, we consider a $3$-form of the form $\sum_{n=0}^{2k-2}c_nz^n\widetilde{\omega}$ in the notation of Section 2. Here, $k=2$ for $N\in\{7,8\}$ and $k=3$ for $N=10$. For each $N$, we will present $2k-1$ vectors $\widehat{R}_n(N)$, resp.\ $R_n(N)$, such that the module of periods $\mathcal I(\widehat{X})$ (of the chosen $3$-form~$\sum_{n=0}^{2k-2}c_nz^n\widetilde{\omega}$), resp.\ $\mathcal I(X)$, is spanned by the components of the vector $\sum_{n=0}^{2k-2}c_n\widehat{R}_n(N)$, resp.\ $\sum_{n=0}^{2k-2}c_nR_n(N)$. For a generic choice of scalars $c_n$, this spanning set is a basis (consisting of $4k-2$ elements). We also record the generic value of the index $[\mathcal I(X) : \mathcal I(\widehat{X})]$ (see Remark \ref{remfinal}).
\medskip

Finally, note that there are many special choices of $c_n$ for which this spanning set is not~a~basis. Some examples can be found (but searching for them systematically is in general very difficult):
\begin{itemize}
    \item For $N = 8$, setting $c_0=c_2=0$ gives rank $2$ (as opposed to the generic rank value of $6$) and setting $c_1=0$ gives rank $4$ for generic choices of $c_0,c_2$, while it gives rank $2$ when e.g. $-c_2/c_0\in\big\{\,4+4\sqrt{2},\; 12+8\sqrt{2},\; 24+16\sqrt{2}\,\big\}$ -- see the examples below 
    \item For $N = 10$, setting $c_n = 0$ for all $n\neq n_0$ gives modules of rank $8$ (as opposed to $10$)
    \item For $N = 10$, setting $c_0 = c_2\neq 0$ and $c_1 = c_3 = c_4 = 0$ gives modules of rank $6$
\end{itemize}
\medskip

\hrule
\medskip
\noindent
The modular surface $E\I=E\II$ over $X_1(7)$ is defined over $\mathbf P^1\simeq X_1(7)$ by the following~functions
\!\!\!\vspace{-15pt}

\begin{align*}
g_2(z,1)&= \frac{1}{12}(z^2 - z + 1)(z^6 - 11z^5 + 30z^4 - 15z^3 - 10z^2 + 5z + 1)\\
g_3(z,1)&= \frac{1}{216}(z^{12} - 18z^{11} + 117z^{10} - 354z^9 + 570z^8 - 486z^7 +\\
&\;\;\;\;\;\;\;\;\;\;\;\;\;\;\;\;\;\;\;\;\;\;\;+ 273z^6 - 222z^5 + 174z^4 - 46z^3 - 15z^2 + 6z + 1)
\end{align*}
\vspace{-15pt}

\noindent
and it has $6$ singular fibers of types $(I_1,I_1,I_1,I_7,I_7,I_7)$ which lie, respectively, over the points:
$$\{\textrm{the three roots of }z^3 - 8z^2 + 5z + 1\},0,1,\infty$$
The generic value of $[\mathcal I(X) : \mathcal I(\widehat{X})]$ is $784$ and these modules are determined by the $3$ vectors $\widehat{R}_n(7)$, resp.\ $R_n(7)$, at: 
\url{https://github.com/donlagic-azur/numcalc-schoen-supplement/tree/main/results/Results_Type_III_N(7).mpl}
\bigskip

\hrule
\medskip
\noindent
The modular surface $E\I=E\II$ over $X_1(8)$ is defined over $\mathbf P^1\simeq X_1(8)$ by the following functions (in which we have applied a harmless shift $z\mapsto z+1/2$ to simplify some computations)
\vspace{-15pt}

\begin{align*}
g_2(z,1)&= 768z^8 + 5376z^6 - 480z^4 - 48z^2 + 3\\
g_3(z,1)&= (16z^4 + 8z^2 - 1)(256z^8 - 4352z^6 + 608z^4 - 16z^2 + 1)
\end{align*}
\vspace{-15pt}

\noindent
and it has $6$ singular fibers of types $(I_1,I_1,I_2,I_4,I_8,I_8)$ which lie, respectively, over the points:
$$-\frac{\sqrt{2}}{4},\frac{\sqrt{2}}{4},\infty,0,\frac{-1}{2},\frac{1}{2}$$
The generic value of $[\mathcal I(X) : \mathcal I(\widehat{X})]$ is $512$ and these modules are determined by the $3$ vectors $\widehat{R}_n(8)$, resp.\ $R_n(8)$, at: 
\url{https://github.com/donlagic-azur/numcalc-schoen-supplement/tree/main/results/Results_Type_III_N(8).mpl}
\medskip\medskip

One can observe that the components of the vector $\widehat{R}_1(8)$ span a rank $2$ module with basis
$$\left\{\begin{aligned}
(4/3)&\pi^2\omega_2 = & 2.33252100556021078651891927562 &+ 0\, i\\
(8/3)&\pi^2\omega_1 = & 0 &+ 2.89039577708864788220167694649\, i
\end{aligned}\right\}$$
where $\omega_1,\omega_2$ are periods of the associated modular form $8/1$ ($8.4.a.a$ in \cite{LMFDB}), such that:
$$\begin{aligned}
&\pi^2\omega_1 = & 0 &+ 1.08389841640824295582562885492\, i \\
&\pi^2\omega_2 = & 1.74939075417015808988918945670 &+ 0 \, i 
\end{aligned}$$
On the other hand, the vectors $\widehat{R}_0(8)$ and $\widehat{R}_2(8)$ both span rank $4$ modules with bases:
$$\textrm{for }\widehat{R}_0(8),\left\{\begin{aligned}
    21.9094368368078925071642210864 &+ 16.6922982676850234159112792153\, i\\
    38.7307783982130059726452261364 &- 2.86394560838090965600666641064\, i\\
    12.1697358539854246980008039659 &+ 27.9023930099084447798735852337\, i\\
    -24.5674972410477146600100502773 &+ 2.86394560838090965600666641064\, i\\
\end{aligned}\right\}$$
$$\textrm{for }\widehat{R}_2(8),\left\{\begin{aligned}
    0.469883134160358495657537680113 &+ 0.715986402095227414001666602660\, i\\
    4.01070342345168132381633164488 &+ 1.72854408241301421998807660059\, i\\
    -3.23706593039595321705412060909 &- 0.654564479270173098985576696599\, i\\
    -4.34296097398165909292206029375 &- 1.72854408241301421998807660059\, i\\
\end{aligned}\right\}$$
This is explained by the modular form $8/2$ ($8.4.b.a$ in \cite{LMFDB}) with coefficients~in~$\mathbf Q(\sqrt{-7}\,)$. Its periods naturally form a lattice in $\mathbf C^2$ spanned by the following pairs, in which all numbers can be expressed using the coefficients of $\widehat{R}_0(8)$ and $\widehat{R}_2(8)$:
$$\left(\begin{aligned}
    1.87780352814060548667506909681 &-2.21310699350035127773721115166\, i\\
    0 &- 0.189854565059585026122451874612\, i
\end{aligned}\right)$$
$$\left(\begin{aligned}
    1.52539117752033661493191583674 &- 2.97252525373869138222701865010\, i\\
    -1.11510793938043717920911117845 &+ 0.726844366631005586623701826606\, i
\end{aligned}\right)$$
$$\left(\begin{aligned}
    5.98582293504208533176836055050 &+ 4.04650485688153250322951855411\, i\\
    0.762695588760168307465957918366 &+ 0.838058595964465358617980599834\, i
\end{aligned}\right)$$
$$\left(\begin{aligned}
    -0.352412350620268871743153260073 &- 3.69936962036969696885072047671\, i\\
    -1.02700485172536996127332286343 &+ 0.173567618255917767189399038691\, i
\end{aligned}\right)$$

Lastly, we note that the $\mathbf Z$-modules spanned by $\widehat{R}_0(8)$ and $\widehat{R}_2(8)$ are mutually commensurable. Moreover, some combinations $c_0\widehat{R}_0(8)+c_2\widehat{R}_2(8)$ (where one may also use the $4$-term~bases~listed above) span lattices of rank $2$, and we list three cases in which they are pairwise commensurable:
$$(c_0,c_2)\in\left\{\;\big(1,\;-12-8\sqrt{2}\big),\;\big(3+\sqrt{2},\;-20-16\sqrt{2}\big),\;\big(4-\sqrt{2},\;-64-40\sqrt{2}\big)\;\right\}$$
\smallskip

\hrule
\medskip
\noindent
The modular surface $E\I=E\II$ over $X_1(10)$ is defined over $\mathbf P^1\simeq X_1(10)$ by the two functions
\vspace{-15pt}

\begin{align*}
g_2(z,1)&= 3z^{12} - 48z^{11} + 312z^{10} - 1080z^9 + 2160z^8 - 1728z^7 - 3072z^6 +\\
&\;\;\;\;\;\;\;\;\;\;\;\;\;\;\;\;\;\;\;\;\;\;\;+ 10368z^5 - 11520z^4 + 3840z^3 + 3072z^2 - 3072z + 768\\
g_3(z,1)&= -(z^2 - 2z + 2)(z^4 - 8z + 8)(z^4 - 6z^3 + 6z^2 + 4z - 4)\\
&\;\;\;\;\;\;\;\;\;\;\;\;\;\;\;\;\;\;(z^8 - 16z^7 + 104z^6 - 352z^5 + 584z^4 - 384z^3 - 64z^2 + 192z - 64)
\end{align*}
\vspace{-15pt}

\noindent
and it has $8$ singular fibers of types $(I_1,I_1,I_2,I_2,I_5,I_5,I_{10},I_{10})$ which lie respectively over~points:
$$\frac{1-\sqrt{5}}{2},\frac{1+\sqrt{5}}{2},3-\sqrt{5},3+\sqrt{5},1,\infty,0,2$$
The presentation of periods in this final example with our standard precision of 30-35 digits pushes the limits of our ability to detect reasonable integer relations between results, such as using the LLL algorithm to determine the rank of modules and rational dependencies between elements. Going further without increasing the number of decimal places, false relations may start appearing, often difficult to distinguish from the correct ones (as among 10 numbers with 35 decimals of precision it becomes easy to always find relations with integer coefficients having 7 or 8 digits). Any larger examples should be computed with more precision.
\smallskip

\noindent
The generic value of $[\mathcal I(X) : \mathcal I(\widehat{X})]$ is $32000$ and these modules are determined by the $5$ vectors $\widehat{R}_n(10)$, resp.\ $R_n(10)$, at:\ 
\url{https://github.com/donlagic-azur/numcalc-schoen-supplement/tree/main/results/Results_Type_III_N(10).mpl}
\medskip
\linespread{0.9}


\begin{thebibliography}{[Abc12]}
    \setlength{\itemsep}{5 pt}


    
    \bibitem[AHV18]{AHV18} J.\ M.\ Aroca, H.\ Hironaka, J.\ L.\ Vicente, \textit{Complex Analytic Desingularization} (Springer Tokyo, 2018).\
    
    \bibitem[Baa10]{Baa10} H.\ Baaziz, \textit{Equations for the modular curve $X_1(N)$ and models of elliptic curves with torsion points}, Mathematics of Computation, vol.\ \textbf{79}, no.\ \textbf{272} (2010), 2371-2386

    \bibitem[Bea82]{Bea82} A.\ Beauville, \textit{Les familles stables de courbes elliptiques sur $\mathbf P^1$ admettant 4 fibres singuli\`eres}, C.\ R.\ Math.\ Acad.\ Sci.\ Paris \textbf{294} (1982), 657–60



    \bibitem[BKSZ]{BKSZ} K.\ B\"onisch, A.\ Klemm, E.\ Scheidegger, D.\ Zagier, \textit{D-brane masses at special fibres of hypergeometric families of Calabi--Yau threefolds, modular forms, and periods}, Commun.\ Math.\ Phys., vol.\ \textbf{405}, art.\ no 134 (2024).\
    
    

    \bibitem[Chm21]{Chm21} T.\ Chmiel, \textit{Computing period integrals of rigid double octic Calabi--Yau threefolds with Picard--Fuchs operator}, J.\ Pure Appl.\ Algebra, vol.\ \textbf{225}, 2021
    
    \bibitem[CvS19]{CvS19} S.\ Cynk, D.\ van Straten, \textit{Periods of rigid double octic Calabi–Yau threefolds}, Ann.\ Polon.\ Math.\ \textbf{123}, 2019, pp.\ 243-258.\

    \bibitem[Cle77]{Cle77} C.\ H.\ Clemens, \textit{Degeneration of Kähler manifolds}.\ Duke Math.\ J., Volume \textbf{44} (1977) no.\ 1, pp.\ 215-290

    \bibitem[Del71]{Del71} P.\ Deligne, \textit{Formes modulaires et repr\'esentations $\ell$-adiques}, S\'eminaire N.\ Bourbaki (1971) \'exp.\ no \textbf{355}, pp.\ 139-172

    \bibitem[FK890]{FK890} R.\ Fricke and F.\ Klein, \textit{Vorlesungen \"uber die Theorie der elliptischen Modulfunctionen}, Teubner, Leipzig, 1890.\

    \bibitem[Grf68]{Grf68} P.\ Griffiths, \textit{On the periods of integrals on algebraic manifolds}, Rice Univ.\ Studies \textbf{54} (1968), 21-38.\ 

    \bibitem[Grf70]{Grf70} P.\ Griffiths, \textit{Periods of integrals on algebraic manifolds: Summary of main results and discussion of open problems}, Bull.\ Amer.\ Math.\ Soc.\ \textbf{76} (1970), 228–296
    

    \bibitem[Har77]{Har77} R.\ Hartshorne, \textit{Algebraic Geometry} (Springer, 1977).\
    
    
    \bibitem[Her91]{Her91} S.\ Herfurtner, \textit{Elliptic surfaces with four singular fibres}, Mathematische Annalen, vol.\ \textbf{291} (1991), 319–342

    \bibitem[Hof13]{Hof13} J.\ Hofmann, \textit{Monodromy calculations for some differential equations.} Thesis, Johannes Gutenberg-Universität Mainz, Mainz, 2013.\
    
    \bibitem[Hus04]{Hus04} D.\ Husem\"uller, \textit{Elliptic Curves}, 2nd ed.\ (Springer, 2004).\


    \bibitem[Inc27]{Inc27} E.\ L.\ Ince, \textit{Ordinary Differential Equations}, Longmans, London, 1927.\

    \bibitem[KO68]{KO68} N.\ Katz, T.\ Oda, \textit{ On the differentiation of De Rham cohomology classes with respect to parameters}, J.\ Math.\ Kyoto Univ.\ \textbf{8-2} (1968), 199-213

    \bibitem[LLL82]{LLL82} A.K.\ Lenstra, H.W.\ Lenstra, L.\ Lovasz, \textit{Factoring Polynomials with Rational Coefficients}, Math. Ann., Vol. \textbf{261} (1982), 515-534

    \bibitem[LMFDB]{LMFDB} The LMFDB Collaboration, \textit{The L-functions and Modular Forms Database}, \url{http://www.lmfdb.org}, 2021, [Online; accessed 15 January 2021]

    \bibitem[LPV24]{LPV24} P.\ Lairez, E.\ Pichon-Pharabod, P.\ Vanhove, \textit{Effective homology and periods of complex projective hypersurfaces}, Math.\ Comp.\ \textbf{93} (2024), no.\ 350, 2985-3025

    \bibitem[Maple]{Maple} Maple 2022, 2023. Maplesoft, a division of Waterloo Maple Inc., Waterloo, Ontario.

    \bibitem[Mey05]{Mey05} C.\ Meyer, \textit{Modular Calabi--Yau Threefolds}, Fields Institute Monographs Vol.\ \textbf{22} (AMS 2005)

    \bibitem[Mez16]{Mez16} M.\ Mezzarobba, \textit{Rigorous Multiple-Precision Evaluation of D-Finite Functions in SageMath}. Extended abstract for a talk presented at the 5th International Congress on Mathematical Software (ICMS 2016), Berlin, Germany, available at: \url{https://hal.science/hal-01342769}

    \bibitem[Mil21]{Mil21} L.\ Milla, \textit{A detailed proof of the Chudnovsky formula with means of basic complex analysis}, preprint (2021) at arXiv:1809.00533v6

    \bibitem[Mir89]{Mir89} R.\ Miranda, \textit{The basic theory of elliptic surfaces}.\ Dottorato di Ricerca in Matematica.\ ETS Editrice, Pisa, 1989

    \bibitem[PP25]{PP25} E.\ Pichon-Pharabod, \textit{Periods of fibre products of elliptic surfaces and the Gamma conjecture}, preprint (2025) at arXiv:2505.07685v1

    \bibitem[PS08]{PS08}  C.A.M.\ Peters, J.H.M.\ Steenbrink, \textit{Mixed Hodge Structures}, A Series of Mod.\ Sur.\ in Math.\ \textbf{52} (Springer, 2008)

    \bibitem[RS20]{RS20} H.\ Ruddat, B.\ Siebert, \textit{Period integrals from wall structures via tropical cycles, canonical coordinates in mirror symmetry and analyticity of toric degenerations}.\ Publ.\ Math.\ IHES.\ \textbf{132} (2020), 1–82

    \bibitem[Sch88]{Sch88} C.\ Schoen, \textit{On fiber products of rational elliptic surfaces with section}, Mathematische Zeitschrift, vol.\ \textbf{197} (1988), 177–199

    \bibitem[Srt19]{Srt19} E.\ C.\ Sert\"oz, \textit{Computing periods of hypersurfaces}, Math.\ Comp.\ \textbf{88} (2019), no.\ 320, 2987-3022

    \bibitem[Sho81]{Sho81} V.\ V.\ Shokurov, \textit{The study of the homology of Kuga varieties}, Math.\ USSR-Izv.\, \textbf{16}:2 (1981), 399–418

    \bibitem[Stt04]{Stt04} M.\ Schütt, \textit{New examples of modular rigid Calabi--Yau threefolds}, Collect.\ Math.\ \textbf{55}.2 (2004), 219-228

    \bibitem[Voi02]{Voi02} C.\ Voisin (translated by L.\ Schneps), \textit{Hodge Theory and Complex Algebraic Geometry I}, Cambridge Stud.\ in Adv.\ Math.\ \textbf{76} (2002)
    
\end{thebibliography}
\end{document}